\theoremstyle{plain}
\newtheorem{thm}{Theorem}[section]
\newtheorem{pro}[thm]{Proposition}
\newtheorem{lem}[thm]{Lemma}
\newtheorem{cor}[thm]{Corollary}
\newtheorem{con}[thm]{Conjecture}
\theoremstyle{definition}
\newtheorem{dfn}[thm]{Definition}
\newtheorem{nt}[thm]{Notation}
\newtheorem{rem}[thm]{Remark}
\theoremstyle{remark}
\newcommand{\Z}{\mathbb{Z}}
\newcommand{\N}{\mathbb{N}}
\newcommand{\C}{\mathbb{C}}
\newcommand{\R}{\mathbb{R}}
\newcommand{\Q}{\mathbb{Q}}
\newcommand{\PS}{\mathbb{P}}
\newcommand{\OO}{\mathcal{O}}
\newcommand{\mcal}{\mathcal}
\DeclareMathOperator{\inte}{int}
\DeclareMathOperator{\rk}{rk}
\DeclareMathOperator{\Coker}{Coker}
\DeclareMathOperator{\Supp}{Supp}
\DeclareMathOperator{\GL}{GL}
\DeclareMathOperator{\Hom}{Hom}
\DeclareMathOperator{\Aut}{Aut}
\DeclareMathOperator{\Pic}{Pic} 
\DeclareMathOperator{\Div}{Div}
\DeclareMathOperator{\NEb}{\overline{\mathrm{NE}}}
\DeclareMathOperator{\Nef}{Nef}
\newcommand{\id}{{\rm id}}
\newcommand{\E}{{\mathcal E}}
\newcommand\sE{{\mathcal E}}
\newcommand\sF{{\mathcal F}}
\newcommand\sI{{\mathcal I}}
\newcommand\sL{{\mathcal L}}
\newcommand\sO{{\mathcal O}}
\newcommand\sS{{\mathcal S}}
\newcommand\bP{{\mathbb P}}
\title{Nef line bundles on Calabi-Yau threefolds, I} 
\author{Vladimir Lazi\'c}
\address{Fachrichtung Mathematik, Campus, Geb\"aude E2.4, Universit\"at des Saarlandes, 66123 Saarbr\"ucken, Germany}
\email{lazic@math.uni-sb.de}
\author{Keiji Oguiso}
\address{Graduate School of Mathematical Sciences, University of Tokyo, Komaba, Meguro, Tokyo, 153-8914, Japan and Korea Institute for Advanced Study, Hoegiro 87, Seoul, 130-722, Korea}
\email{oguiso@ms.u-tokyo.ac.jp}
\author{Thomas Peternell}
\address{Mathematisches Institut, Universit\"at Bayreuth, 95440 Bayreuth, Germany}
\email{thomas.peternell@uni-bayreuth.de}
\thanks{All authors were partially supported by the DFG-Forschergruppe 790 ``Classification of Algebraic Surfaces and Compact Complex Manifolds". Lazi\'c was supported by the DFG-Emmy-Noether-Nachwuchsgruppe ``Gute Strukturen in der h\"oherdimensionalen birationalen Geometrie". Oguiso was supported by JSPS Grant-in-Aid (S) No 25220701, JSPS Grant-in-Aid (S) No 22224001, JSPS Grant-in-Aid (B) No 22340009, and by KIAS Scholar Program. We would like to thank J.\ Chen, E.\ Floris, R.\ Svaldi and L.\ Tasin for useful conversations related to this work. Last but not least, we thank the referees whose many comments were extremely valuable.}
\begin{document}

\begin{abstract}
{We prove that a nef line bundle $\sL$ with $c_1(\sL)^2 \ne 0$ on a Calabi-Yau threefold $X$ with Picard number $2$ and with $c_3(X) \ne 0$ is semiample, that is, some multiple of $\mathcal L$ is generated by global sections.}
\end{abstract}

\maketitle
\setcounter{tocdepth}{1}
\tableofcontents

\section{Introduction}

The following is a standard conjecture in the theory of simply connected Ricci-flat compact K\"ahler manifolds.

\begin{con}\label{con:1}
Let $X$ be a simply connected compact K\"ahler manifold with $c_1(X) \equiv 0$. Let $\mathcal L$ be a nef line bundle on $X$. Then $\mathcal L$ is semiample, that is, there exists a positive integer $m$ such that $\mathcal L^{\otimes m}$ is generated by global sections.
\end{con}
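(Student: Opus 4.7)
The plan is to reduce to the irreducible building blocks of $X$, stratify by the numerical dimension $\nu(\mathcal L)$, and attack each stratum with the tools of the minimal model program together with analytic input in the strictly K\"ahler case.

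First I would invoke the Beauville--Bogomolov decomposition: since $X$ is simply connected, compact K\"ahler and has $c_1(X)=0$, it decomposes as a finite product of irreducible Calabi--Yau manifolds and irreducible hyperk\"ahler manifolds. Semiampleness is preserved by products, so I may assume $X$ is itself irreducible CY or irreducible hyperk\"ahler. In particular $h^1(X,\O_X)=0$, so $\Pic(X)=\NS(X)$ is finitely generated and numerical equivalence and linear equivalence differ only by torsion.

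Second, I would split on $\nu:=\nu(\mathcal L)$. If $\nu=0$, then $\mathcal L$ is numerically trivial, hence torsion by the previous step, hence semiample. If $\nu=\dim X$, then $\mathcal L$ is nef and big; in the projective case, Kawamata's base-point-free theorem applied to the klt pair $(X,0)$ (legitimate because $K_X\equiv 0$) yields semiampleness, and in the strictly K\"ahler case one would appeal to the K\"ahler base-point-free theorem, which is known in low dimensions. The substantive content sits in the intermediate range $0<\nu<\dim X$.

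For intermediate $\nu$, the strategy splits into two steps: nonvanishing and base-point freeness. For nonvanishing, one wants to show $H^0(X,\mathcal L^{\otimes m})\ne 0$ for some $m$. The inputs are asymptotic Riemann--Roch (where, because $c_1(X)=0$, $\chi(\mathcal L^{\otimes m})$ is a polynomial whose leading coefficients involve $c_1(\mathcal L)^\nu$ and the Chern classes of $X$), Kodaira-type vanishing, and, on the null locus of $\mathcal L$, rational curves produced by Mori bend-and-break in order to force effectivity. Once a section is produced, one studies the Iitaka fibration of $\mathcal L$: a general fibre is again simply connected compact K\"ahler with trivial $c_1$, so by induction on dimension $\mathcal L$ restricted to a general fibre is semiample, and one upgrades this to semiampleness on $X$ via a relative base-point-free argument over the base of the Iitaka fibration.

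The main obstacle is precisely this intermediate-$\nu$ case, and within it the \emph{nonvanishing} step: it is open even for projective Calabi--Yau threefolds in full generality, and the present paper dispatches only a restricted slice of it (Picard number $2$, $c_1(\mathcal L)^2\ne 0$, $c_3(X)\ne 0$), exploiting the rigid cone structure that these numerical hypotheses impose. A secondary difficulty is that Kawamata's base-point-free theorem is not available in full K\"ahler generality, so once a section is produced, one still needs analytic substitutes, e.g.\ Bergman kernel or multiplier ideal asymptotics, to promote it to a base-point-free linear system.
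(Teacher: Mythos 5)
The statement you are addressing is Conjecture \ref{con:1}, which the paper explicitly presents as an open conjecture and does not prove; it only establishes the special case of Theorem \ref{MT2} (projective Calabi--Yau threefolds with $\rho(X)=2$, $c_1(\sL)^2\neq 0$, $c_3(X)\neq 0$). Your proposal is likewise not a proof: it is a strategy outline whose decisive step --- nonvanishing of $H^0(X,\sL^{\otimes m})$ in the intermediate range $0<\nu(\sL)<\dim X$ --- is left entirely open, as you yourself acknowledge in the final paragraph. The reductions you do carry out (Beauville--Bogomolov decomposition, the cases $\nu=0$ and $\nu=\dim X$ via torsion of numerically trivial classes and the basepoint-free theorem) are standard and correct in the projective setting, but they only isolate the hard case rather than resolve it; this is exactly where the paper's actual work begins, via the vanishing theorems for $H^q(X,\Omega^p_X\otimes\sL^{\otimes m})$, the nefness of logarithmic cotangent bundles $\Omega^1_X(\log D)\otimes\sO_X(mL)$, and the resulting contradiction with $c_3(X)\neq 0$ through Hirzebruch--Riemann--Roch.

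Two further points deserve correction. First, your inductive step via the Iitaka fibration asserts that a general fibre is again simply connected with trivial $c_1$; simple connectedness of fibres does not follow from simple connectedness of the total space (an elliptically fibred K3 or Calabi--Yau threefold already has non-simply-connected fibres), so the induction as stated is not available --- though the restriction of $\sL$ to a general Iitaka fibre is in any case torsion, so this step is also not where the difficulty lies. Second, note that once nonvanishing is known for a nef line bundle on a projective Calabi--Yau threefold, semiampleness follows from log abundance for threefolds (as the paper recalls, citing \cite{Og93,KMM94,Kaw92}), so the ``secondary difficulty'' you describe of promoting a section to a basepoint-free system is not an issue in dimension three in the projective case; the entire content of the problem there is nonvanishing.
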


Recall that a line bundle $\mathcal L$ on a projective manifold $X$ of dimension $n$ is nef if $c_1(\mathcal L ) \cdot C \geq 0$ for every irreducible curve $C$ on $X$; in the K\"ahler setting, $\mathcal L$ being nef means that $c_1(\mathcal L) $ is in the closure of the K\"ahler cone. In this paper we consider the case of dimension three, in which setting $X$ is automatically projective. Conjecture \ref{con:1}, which should be seen as a stronger form of log abundance on Calabi-Yau manifolds, is rather mysterious unless $c_1(\sL)^{\dim X} > 0$, and almost nothing is known when $\dim X>2$.

Our results towards Conjecture \ref{con:1} are the following. The first result describes a general strategy towards Conjecture \ref{con:1}. 

\begin{thm} \label{thm:introample} 
Let $X$ be a three-dimensional simply connected projective manifold with $c_1(X) \equiv 0$ and $c_3(X) \ne 0$. Let $\mathcal L$ be a nef line bundle on $X$ with $c_1(\mathcal L)^2 \not\equiv 0$. Let $G$ be a smooth ample divisor on $X$. Assume that there exists a very ample divisor $H$ on $X$ and a positive integer $m$ such that for general $D \in | H |$ (so that $G+D$ has simple normal crossings) we have: 
\begin{enumerate} 
\item[(i)]  the locally free sheaf $\Omega^1_X\big(\log(D+G)\big) \otimes \mathcal L^{\otimes m}$ is nef, and
\item[(ii)] the line bundle  $\mathcal L|_{D+G} $ is ample. 
\end{enumerate} 
Then $\mathcal L$ is semiample. 
\end{thm}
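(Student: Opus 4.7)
The plan is to case-split by the sign of $L^3$. If $L^3 > 0$, then $L$ is nef and big; since $K_X = 0$, the Kawamata base-point-free theorem immediately gives that $L$ is semiample. The interesting case is therefore $L^3 = 0$, in which $c_1(L)^2 \ne 0$ together with nefness forces $L^2 \cdot H > 0$, so $L$ has numerical dimension exactly $2$. I focus on this case below.

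In the case $L^3 = 0$, the strategy is to produce sections of $|kL|$ by lifting from a general $D \in |H|$. By hypothesis (2), $L|_D$ is ample on $D$, so $|kL|_D|$ is base-point-free for $k \gg 0$. From the restriction sequence
\[
0 \to \mathcal O_X(kL - H) \to \mathcal O_X(kL) \to \mathcal O_D(kL|_D) \to 0,
\]
semiampleness of $L$ will follow once the vanishing $H^1(X, kL - H) = 0$ is established for $k \gg 0$: then $H^0(X, kL) \twoheadrightarrow H^0(D, kL|_D)$ for general $D$, the base locus of $|kL|$ meets no general $D \in |H|$, and — because $H$ is very ample — is therefore empty.

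The desired vanishing is where hypothesis (1) and the condition $c_3(X) \ne 0$ come in. The nefness of the rank-three bundle $E = \Omega^1_X(\log D) \otimes \mathcal O_X(mL)$ feeds into a logarithmic Akizuki--Nakano-type input via the log residue sequence $0 \to \Omega^1_X \to \Omega^1_X(\log D) \to \mathcal O_D \to 0$ and its wedge powers; together with Serre duality on the Calabi--Yau threefold this should reduce $H^1(X, kL - H) = 0$ to cohomological statements controlled by the positivity of $E$. In parallel, Fulton--Lazarsfeld Schur-positivity of the Chern classes of the nef bundle $E$ gives the numerical inequalities $c_3(E) \geq 0$ and $c_1(E) c_2(E) - c_3(E) \geq 0$ on $X$; combined with $L^3 = 0$, Miyaoka's non-negativity $L \cdot c_2(X) \geq 0$ for nef classes on Calabi--Yau threefolds, and the hypothesis $c_3(X) \ne 0$, these should force the strict inequality $L \cdot c_2(X) > 0$. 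Riemann--Roch then gives $\chi(X, kL) = k\, L \cdot c_2(X)/12$, growing linearly in $k$, which furnishes the quantitative lower bound on $h^0(X, kL)$ that complements the section-lifting argument and pins down $\kappa(L) = \nu(L) = 2$.

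The hardest step is the vanishing $H^1(X, kL - H) = 0$. Since $L$ is not big when $L^3 = 0$, standard Kawamata--Viehweg or Kodaira vanishing is unavailable, and the whole content of hypothesis (1) must be used to supply a substitute, by converting the positivity of the rank-three log-twisted cotangent bundle $E$ into a scalar cohomological vanishing through the log residue sequence. The sign bookkeeping in the Fulton--Lazarsfeld inequalities — where $c_3(X)$ enters with a definite sign and ties the Chern-number algebra to the topology of $X$ — is what is expected to close the argument.
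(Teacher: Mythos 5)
Your proposal does not follow the paper's argument, and it contains a genuine gap at the two places where you yourself flag uncertainty. First, the vanishing $H^1(X,kL-H)=0$ for $k\gg0$ is not something the hypotheses supply: $kL-H$ is neither nef nor big (indeed $(kL-H)^3\sim -3k^2L^2\cdot H<0$), so no standard vanishing applies, and the nefness of $\Omega^1_X(\log D)\otimes\OO_X(mL)$ does not convert into a vanishing for the \emph{line bundle} $kL-H$ via the residue sequence; the residue sequence relates $\Omega^1_X$, $\Omega^1_X(\log D)$ and $\OO_D$, not $\OO_X(kL)$ and $\OO_X(kL-H)$. If that $H^1$ vanished one would get $h^0(X,kL)\gtrsim k^2$ at once, which shows the step carries essentially the full weight of the theorem and cannot be waved through. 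Second, the hope that Fulton--Lazarsfeld positivity plus $c_3(X)\neq0$ forces $L\cdot c_2(X)>0$ is misplaced: writing out $c_3(E)=m^2L^2\cdot D+m(L\cdot c_2(X)+L\cdot D^2)+(c_2(X)\cdot D-c_3(X)+D^3)$ for $E=\Omega^1_X(\log D)\otimes\OO_X(mL)$ with $L^3=0$, the inequality $c_3(E)\geq0$ is automatic for the large $m$ where nefness is assumed (since $L^2\cdot D>0$) and yields no information on $L\cdot c_2(X)$. Worse, under the contradiction hypothesis that $L$ is not semiample, Oguiso's result (Proposition \ref{prop1}(ii) in the paper) gives $L\cdot c_2(X)=0$ exactly, so $\chi(X,kL)=0$ for all $k$ and your Riemann--Roch lower bound evaporates. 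No contradiction with $c_3(X)\neq0$ can be extracted at the level of Chern numbers of line bundles.

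The paper's actual route is a proof by contradiction through the cohomology of the twisted cotangent bundle rather than of $\OO_X(kL)$: if $L$ is not semiample then $\kappa(X,L)=-\infty$ and $\chi\big(X,\Omega^1_X\otimes\OO_X(mL)\big)=-\tfrac12 c_3(X)$ for \emph{all} $m$. One then kills $H^0$ and $H^3$ for $|m|\gg0$ using the Campana--P\u{a}un pseudoeffectivity theorem (Proposition \ref{pro:2.3new}), kills $H^2\big(X,\Omega^1_X\otimes\OO_X(mL)\big)$ for $m\gg0$ using hypothesis (1): the bundle $\mathcal E=\Omega^1_X(\log D)\otimes\OO_X(m_1L)$ is nef and \emph{big} because $s_3(\mathcal E)=10m_1^2L^2\cdot D-D\cdot c_2(X)-c_3(X)>0$, so Kawamata--Viehweg on $\PS(\mathcal E)$ applies (Lemma \ref{van}), and kills $H^1\big(X,\Omega^1_X\otimes\OO_X(-mL)\big)$ for $m\gg0$ via the Esnault--Viehweg logarithmic vanishing theorem together with hypothesis (2). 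Comparing signs of $\chi$ at $m\gg0$ and $m\ll0$ forces $c_3(X)=0$, contradicting the hypothesis. This is where $c_3(X)\neq0$ genuinely enters --- through $\chi\big(X,\Omega^1_X\otimes\OO_X(mL)\big)=-\tfrac12c_3(X)$ --- and it is the ingredient your proposal never concretely uses.
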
 

This result is Theorem \ref{ample} and is a special case of Theorem \ref{topology} below. Here, recall that a locally free sheaf $\mathcal E$ is nef if the line bundle $\OO_{\PS(\mathcal E)}(1)$ is nef. Note that the condition that $\mathcal L|_{G+D}$ is ample is equivalent to the ampleness of $\mathcal L|_{G}$ and of  $\mathcal L|_{D}$
and therefore by Proposition \ref{pro:trivialsurface},
to saying that there is no irreducible surface $S$ on $X$ such that $c_1(\mathcal L) \cdot S = 0$. 

As a consequence of Theorem \ref{thm:introample} we obtain the following:
 
\begin{thm}  \label{MT2} 
Let $X$ be a three-dimensional simply connected projective manifold with $c_1(X) \equiv 0$ and with Picard number $2$. Suppose further that $c_3(X) \ne 0$. Then any nef line bundle $\mathcal L$ on $X$ with  $c_1(\mathcal L)^2 \not \equiv 0$ is semiample. 
\end{thm}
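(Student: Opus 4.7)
The overall strategy is to verify the two hypotheses of Theorem \ref{thm:introample} for $\mathcal L$, which will then yield semiampleness. As a first reduction: if $c_1(\mathcal L)^3 > 0$, then $\mathcal L$ is nef and big and, since $K_X \equiv 0$, the Kawamata--Shokurov base-point free theorem (applied to $aL - K_X \equiv aL$ for any $a>0$) already gives that $\mathcal L$ is semiample. Hence from now on I assume $c_1(\mathcal L)^3 = 0$, so that $c_1(\mathcal L)$ lies on the boundary of the two-dimensional nef cone $\Nef(X) \subset N^1(X)_\R$; let $M$ generate the other extremal ray.

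Next, I would check hypothesis (2) of Theorem \ref{thm:introample}: for any very ample $H$ and general $D \in |H|$, the restriction $L|_D$ is ample. By Proposition \ref{pro:trivialsurface} this reduces to excluding an irreducible surface $S \subset X$ with $L \cdot S = 0$ in $N_1(X)_\R$, i.e.\ with $L|_S$ numerically trivial on $S$. Since $\rho(X) = 2$, one has $[S] \equiv aL + bM$ in $N^1(X)_\R$, and the conditions $L^2 \cdot S = 0$ and $L \cdot M \cdot S = 0$, together with $L^3 = 0$ and $L^2 \ne 0$, produce rigid linear constraints. A case analysis using the nefness of $L$ and $M$, combined with $c_3(X) \ne 0$ to rule out the degenerate scenarios in which $X$ would admit incompatible fibration structures, should exclude any such $S$.

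The main obstacle is verifying hypothesis (1): producing $H$ very ample and $m > 0$ such that $\Omega^1_X(\log D) \otimes \mathcal O_X(mL)$ is nef for general $D \in |H|$. The plan is to start from the residue sequence
\[
0 \to \Omega^1_X \to \Omega^1_X(\log D) \to \mathcal O_D \to 0
\]
and twist by $\mathcal O_X(mL)$, so that nefness of the twisted log sheaf is controlled by nefness of $\Omega^1_X \otimes \mathcal O_X(mL)$ together with $\mathcal O_D(mL|_D)$; the latter is automatic once (2) holds and $H$ is sufficiently positive. For the former, Miyaoka's generic semipositivity (available since $K_X \equiv 0$) implies that $\Omega^1_X$ is nef on a general complete intersection curve, so the locus where $\Omega^1_X \otimes \mathcal O_X(mL)$ fails to be nef is a proper closed subset. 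By step (2), every curve in that locus intersects $L$ strictly positively, so for $m$ sufficiently large the twist overcomes the negativity. Balancing $m$ and the positivity of $H$ uniformly --- and ensuring that generality of $D \in |H|$ is preserved throughout --- is the technical heart of the proof and the main obstacle I expect. Once both (1) and (2) are in place, Theorem \ref{thm:introample} concludes that $\mathcal L$ is semiample.
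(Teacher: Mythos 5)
Your overall frame (reduce to $\nu(X,L)=2$ via the basepoint free theorem, verify hypothesis (2) of Theorem \ref{thm:introample} by ruling out $L$-trivial surfaces using $\rho(X)=2$, then attack hypothesis (1)) matches the paper, and your treatment of (2) is essentially Lemma \ref{lem:surface} and Lemma \ref{generic} — note that the clean argument there does not need $c_3(X)\neq0$ at all: if $L^2\cdot S=0$ then (arguing by contradiction, so $\kappa(X,L)=-\infty$ and $S$ is not proportional to $L$) the $1$-cycle $L^2$ is orthogonal to the basis $\{L,S\}$ of $N^1(X)_\R$, forcing $L^2=0$.

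However, your verification of hypothesis (1) has a genuine gap, and it is precisely where all the work in the paper lies. Your mechanism is: $\Omega^1_X$ is generically semipositive, so the locus where $\Omega^1_X\otimes\OO_X(mL)$ fails to be nef is a proper closed subset, and every curve in that locus meets $L$ positively, so a large twist by $mL$ fixes it. Both halves fail. First, Miyaoka's theorem controls general complete intersection curves only; the set of curves on which $\Omega^1_X$ is non-nef need not lie in a proper closed subset — on any rational curve $C$ the quotient $\Omega^1_X|_C\to\Omega^1_C\simeq\OO_{\PS^1}(-2)$ shows $\Omega^1_X|_C$ is never nef, and Calabi--Yau threefolds typically carry (families of) rational curves. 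Second, and decisively, by Lemma \ref{generic} there are (countably many, and after the reductions of Proposition \ref{nef}, finitely many relevant) curves $C$ with $L\cdot C=0$; on these the twist $\OO_X(mL)|_C$ has degree $0$ and contributes nothing, so ``$m$ large'' cannot overcome any negativity there. Relatedly, Proposition \ref{pro:2.3new} shows $H^0(X,\Omega^1_X\otimes\OO_X(mL))=0$ for all large $m$, so the positivity you need cannot be extracted from $\Omega^1_X\otimes\OO_X(mL)$: it must come from the logarithmic poles. The paper's actual argument is structured around this: the space of sections of $\Omega^1_X(\log D)\otimes\OO_X(mL)$ grows like $\tfrac12 m^2L^2\cdot D$ (Lemma \ref{RR}), a delicate global-generation argument over the family of all smooth $D\in|H|$ (Propositions \ref{pro:min}, \ref{prop:globgen}, \ref{nef}, using $\rho(X)=2$ essentially) yields nefness away from finitely many $L$-trivial curves, and for those remaining curves one needs Lemma \ref{lem:HN}: a choice of $D$ in a high multiple of an ample system whose differentials at the points of $D\cap C$ avoid the maximal destabilizing subsheaf of $\Omega^1_X|_C$, so that $\Omega^1_X(\log D)|_C$ itself becomes nef. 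None of this is captured by the residue-sequence-plus-twist argument you propose.
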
 

Theorem \ref{MT2} was claimed in \cite{Wi94}, even in a more general setting, but we were unable to follow the proof; cp.\ Theorem \ref{ample}. However, the general ideas of  \cite{Wi94} are important. 

The assumption that $\rho(X) = 2$  in Theorem \ref{MT2} is essential in the proof, but we hope that the strategy and some of the methods can be useful also in the general case. We also emphasize that there is a wealth of interesting families of Calabi-Yau threefolds of Picard number $\rho(X) =2$; we refer to \cite{LP13, Og14} and the references given therein. Notice also that if $\sL$ is a semiample line bundle on a Calabi-Yau threefold with $c_1(\sL)^2\not\equiv 0$  and $c_1(\sL) \cdot c_2(X) = 0$, then $\rho(X) \geq 3$ by \cite{Og93}. Therefore, in order to remove the assumption $c_3(X) \ne 0$ in Theorem \ref{MT2}, one should prove the non-existence of a nef line bundle $\mathcal L$ such that 
$c_1(\mathcal L)^2 \not \equiv 0 $ and $c_1(\mathcal L) \cdot c_2(X) = 0$ rather than proving abundance for $\mathcal L$. 

\medskip

The semiampleness of a nef line bundle $\sL$ on a simply connected projective threefold $X$ with $c_1(X) \equiv 0$ is obvious if $c_1(\sL) \equiv 0$ (in which case $\mathcal L \simeq \OO_X$), and when $\mathcal{L}$ is big, it is a consequence of the basepoint free theorem \cite{Sho85,Kaw85b}.  However, if there is no bigness assumption, the existence of sections is notoriously difficult. Remarkably, as a consequence of log abundance for threefolds, in order to show semiampleness of $\mathcal L$, by \cite{Og93,KMM94,Kaw92}, it suffices to show that 
$$H^0(X,\mathcal L^{\otimes m}) \ne 0\quad\text{for some positive }m.$$ 

Our results are also related to the Cone conjecture of Morrison and Kawamata. As we discuss in Section \ref{sec:prelim}, a consequence of Kawamata's formulation of the Cone conjecture is the following structural prediction.

\begin{con}\label{con:2}
Let $X$ be a $\Q$-factorial projective variety with klt singularities and with numerically trivial canonical class $K_X$. Then  $\Nef(X)^+=\Nef(X)^e$.
\end{con}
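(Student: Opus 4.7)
The inclusion $\Nef(X)^{e}\subseteq \Nef(X)^{+}$ is essentially tautological, since every class in $\Nef(X)^{e}$ is by definition rational (a convex combination of classes of effective, semiample $\mathbb{Q}$-divisors); the real content of the conjecture lies in the reverse inclusion. My plan is to reduce it to a pointwise statement and then to abundance: it suffices to show that every rational nef class $[L]\in \Nef(X)\cap N^{1}(X)_{\mathbb{Q}}$ is already semiample, because then $[L]$ lies in $\Nef(X)^{e}$ for each such $L$ and one may pass to closed convex hulls to get the equality of cones.

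To prove this pointwise statement for a nef $\mathbb{Q}$-Cartier divisor $L$, I would stratify by the numerical dimension $\nu(L)$. If $\nu(L)=0$, then $L\equiv 0$ and there is nothing to do. If $\nu(L)=\dim X$, then $L$ is nef and big, and since $K_{X}\equiv 0$ the divisor $L\equiv L-K_{X}$ is nef and big, so the Kawamata--Shokurov base-point-free theorem for klt pairs immediately yields that $L$ is semiample. The genuine difficulty is therefore concentrated in the intermediate range $0<\nu(L)<\dim X$.

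For such $L$ the natural approach is to produce a contraction $f\colon X\to Y$ onto a normal projective variety together with an ample $\mathbb{Q}$-Cartier divisor $A$ on $Y$ satisfying $L\sim_{\mathbb{Q}} f^{*}A$; once $f$ is constructed, $L$ is automatically semiample. To build $f$, one would first establish the nonvanishing statement $\kappa(X,L)=\nu(L)$, pass to the Iitaka fibration of $L$, and run a $(K_{X}+\varepsilon L)$-MMP for small rational $\varepsilon>0$ (equivalently an $\varepsilon L$-MMP, as $K_{X}\equiv 0$) to reach a good minimal model, using the $\mathbb{Q}$-factoriality and klt hypotheses to invoke the full MMP machinery and the known existence of good minimal models for klt pairs with small boundary. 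After descent, $L$ pulls back from an ample class on the base.

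The principal obstacle is precisely the nonvanishing $\kappa(X,L)=\nu(L)$ for nef $\mathbb{Q}$-divisors $L$ on klt varieties with $K_{X}\equiv 0$; this is the effective-nonvanishing/abundance half of the circle of ideas surrounding the Cone conjecture, and it is open in essentially all dimensions $\geq 3$ as soon as $\nu(L)$ is intermediate. The main theorems of the present paper should be viewed as the first substantive case of this strategy, solving precisely the three-dimensional $\nu(L)=2$ instance under additional hypotheses on $X$ such as $c_{3}(X)\neq 0$ and $\rho(X)=2$.
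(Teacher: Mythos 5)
The statement you are addressing is a conjecture, and the paper does not prove it in full; what it \emph{does} prove (Theorem \ref{thm:nef+}) is exactly the inclusion $\Nef(X)^e\subseteq\Nef(X)^+$ that you dismiss as essentially tautological. That dismissal rests on a misreading of the definition: $\Nef(X)^e$ is the cone spanned by the classes of \emph{effective $\R$-divisors} lying in $\Nef(X)$, not by effective semiample $\Q$-divisors. An effective nef $\R$-divisor $D=\sum a_iD_i$ with irrational coefficients has no a priori reason to lie in the convex hull of \emph{rational nef} classes: the components $D_i$ themselves need not be nef, so one cannot simply perturb the coefficients to rational values while remaining in the nef cone. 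The paper makes this direction work by rescaling $D$ so that $(X,D)$ is klt and invoking the Shokurov--Birkar rational polytope theorem (Theorem \ref{thm:ShokurovPolytope}): with $V=\bigoplus\R D_i$ and $K_X\equiv0$, the set $\mathcal N(V)\cap\sum\R_+D_i$ is a rational polytope containing $D$, so $D$ is a convex combination of rational nef divisors. This relies on boundedness of extremal rays and is genuinely nontrivial; the paper explicitly notes that even this inclusion was previously unknown outside the abelian and hyperk\"ahler cases.

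For the reverse inclusion $\Nef(X)^+\subseteq\Nef(X)^e$, your reduction to semiampleness (equivalently effectivity) of every rational nef class, stratified by numerical dimension with the big case handled by basepoint freeness, is the standard picture, and you correctly identify that the intermediate numerical dimensions amount to an open nonvanishing/abundance problem; the paper only attacks this in the special threefold situations of Theorems \ref{thm:introample} and \ref{MT2} and does not claim the conjecture. So your architecture for the hard direction is consistent with the paper's point of view, but your treatment of the one direction the paper actually establishes contains a genuine error: the step you label tautological is the step requiring proof.
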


Here, $\Nef(X)^+$ and $\Nef(X)^e$ are the parts of the nef cone of $X$ spanned by rational, respectively effective, classes. It seems to have been unknown thus far whether one of these cones is a subset of the other, unless $X$ is an abelian variety or a hyperk\"ahler manifold \cite{Bou04}; 
see also \cite{CO15} for results on special Calabi-Yau manifolds in any dimension. 

Theorem \ref{thm:nef+} below gives a short proof that in the most general setting we have $\Nef(X)^e\subseteq\Nef(X)^+$. Thus, Theorem \ref{MT2} together with Theorem \ref{thm:nef+} can be restated as follows.

\begin{cor}
Let $X$ be a three-dimensional simply connected projective manifold with $c_1(X) \equiv 0$ and with Picard number $2$. Assume that $c_3(X) \ne 0$ and that $X$ does not carry a non-trivial line bundle $\sL$ with $c_1(\sL)^2 \equiv 0$ and $c_1(\sL) \cdot c_2(X) = 0$. Then Conjecture \ref{con:2} holds: $\Nef(X)^+=\Nef(X)^e$.
\end{cor}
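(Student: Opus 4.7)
The plan is to combine Theorem \ref{thm:nef+}, which yields $\Nef(X)^e\subseteq\Nef(X)^+$ in great generality, with the reverse inclusion $\Nef(X)^+\subseteq\Nef(X)^e$ derived from Theorem \ref{MT2} and the added hypothesis. Since $\Nef(X)^+$ is generated over $\R_{\geq 0}$ by rational nef classes, and every such class admits an integer multiple that is a nef line bundle, it will suffice to show that every nontrivial nef line bundle $L$ on $X$ is semiample: a semiample $L$ is represented by an effective divisor and therefore lies in $\Nef(X)^e$.

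For a fixed nontrivial nef line bundle $L$ on $X$, I would split on the numerical dimension. If $c_1(L)^2\neq 0$, Theorem \ref{MT2} applies directly and gives semiampleness of $L$, and there is nothing more to do. If instead $c_1(L)^2=0$, then the hypothesis of the corollary forces $c_1(L)\cdot c_2(X)\neq 0$, and since $L$ is nef on a Calabi--Yau threefold, Miyaoka's pseudo-effectivity theorem for $c_2$ yields $c_1(L)\cdot c_2(X)\geq 0$, hence $c_1(L)\cdot c_2(X)>0$. In this $\nu(L)=1$ situation on a Calabi--Yau threefold, Riemann--Roch together with Serre duality and the nefness of $L$ produce a nonzero section of some $L^{\otimes m}$, see for instance \cite{Og93}; the log abundance reduction recalled in the introduction then promotes this to semiampleness of $L$.

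Combining both cases, every rational nef class on $X$ is semiample, and so $\Nef(X)^+\subseteq\Nef(X)^e$. Together with Theorem \ref{thm:nef+} this gives the desired equality $\Nef(X)^+=\Nef(X)^e$, which is Conjecture \ref{con:2} for $X$. The substantive content of the corollary is carried entirely by Theorem \ref{MT2} and Theorem \ref{thm:nef+}: the role of the extra hypothesis is precisely to bridge the $c_1(L)^2=0$ boundary case that Theorem \ref{MT2} does not cover directly, so the genuine obstacle lies upstream, in the proof of Theorem \ref{MT2} itself, rather than in the derivation here.
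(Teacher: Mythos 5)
Your proof is correct and follows essentially the same route the paper intends: the corollary is stated as an immediate consequence of Theorem \ref{MT2} and Theorem \ref{thm:nef+}, with the extra hypothesis serving exactly to dispose of the $c_1(\sL)^2=0$ case via $c_1(\sL)\cdot c_2(X)\neq 0$ (which forces semiampleness by \cite{Og93}, i.e.\ the contrapositive of Proposition \ref{prop1}(ii)). No substantive difference from the paper's argument.
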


In fact, by Theorem \ref{MT2} and the discussion below it, every nef line bundle on $X$ is semiample. 

Another application concerns the existence of rational curves on Calabi-Yau threefolds. It is known that rational curves exist on Calabi-Yau threefolds with Picard number at least $14$, cf.\ \cite[Theorem]{HBW92}, but almost nothing is known for smaller Picard number apart from \cite{DF14}. Using \cite{Pe91,Og93}, in Section \ref{section4} we deduce the following:

\begin{cor}\label{cor:rational}
Let $X$ be a three-dimensional simply connected projective manifold with $c_1(X) \equiv 0$, $c_3(X) \neq 0$ and with Picard number $2$. Assume that not both boundary rays of the cone $\Nef(X)$ are irrational. Then $X$ has a rational curve.
\end{cor}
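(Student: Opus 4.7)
Since $\rho(X)=2$, the nef cone $\Nef(X)\subset N^1(X)_{\R}$ is a full-dimensional closed strictly convex cone in a two-dimensional space, bounded by exactly two rays. The hypothesis guarantees that at least one of these rays is rational, so I would pick a primitive integral class on such a ray, represented by a nef line bundle $\sL$ with $c_1(\sL)\neq 0$; by construction $\sL$ lies on $\partial\Nef(X)$ and is therefore not ample. My plan is to analyze $\sL$ according to the vanishing of $c_1(\sL)^2\in N^2(X)_{\R}$ and to produce a rational curve in each case by exploiting semiampleness.

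In the case $c_1(\sL)^2\neq 0$, I would apply Theorem \ref{MT2} to conclude that $\sL$ is semiample, so a suitable power defines a contraction $f\colon X\to Y$ to a normal projective variety. Non-ampleness of $\sL$ rules out $f$ being an isomorphism, while $c_1(\sL)^2\neq 0$ rules out $\dim Y\leq 1$, forcing $\dim Y\in\{2,3\}$. If $\dim Y=3$, then $f$ is a nontrivial birational contraction of the Calabi--Yau threefold $X$; its exceptional locus is either a uniruled divisor (divisorial contraction to a point or a curve) or a disjoint union of smooth rational curves (flopping contraction, by classical results of Reid and Koll\'ar), and in either case a rational curve appears. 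If $\dim Y=2$, then $f$ is an elliptic fibration over a surface; multiplicativity of the Euler number would give $c_3(X)=0$ for a smooth elliptic fibration, so the hypothesis $c_3(X)\neq 0$ forces a singular fiber of Kodaira type, which contains a rational component by \cite{Pe91}.

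In the case $c_1(\sL)^2=0$, I would first note that generic nefness of $c_2(X)$ on a Calabi--Yau threefold yields $c_1(\sL)\cdot c_2(X)\geq 0$, with equality forcing $c_1(\sL)$ to be numerically trivial -- which is impossible since it spans a nonzero boundary ray of $\Nef(X)$. Hence $c_1(\sL)\cdot c_2(X)>0$, and \cite{Og93} applies to give semiampleness of $\sL$, with the induced morphism $f\colon X\to C$ mapping $X$ onto a smooth curve $C$. A general fiber of $f$ is a smooth minimal surface of Kodaira dimension $0$; multiplicativity of Euler numbers combined with $c_3(X)\neq 0$ excludes the abelian surface case, so general fibers are K3 surfaces. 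By the classical Bogomolov--Mumford theorem every projective K3 surface carries a rational curve, and such a curve lying in a fiber of $f$ is the desired rational curve on $X$.

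The most delicate part of this plan is the case $c_1(\sL)^2=0$: it packages Oguiso's semiampleness criterion from \cite{Og93} together with the Bogomolov--Mumford theorem on K3 surfaces, and requires the numerical ruling-out of $c_1(\sL)\cdot c_2(X)=0$. The elliptic-fibration subcase of the first branch is similarly nontrivial, as it relies on \cite{Pe91} to produce singular fibers on a simply connected Calabi--Yau threefold with $c_3(X)\neq 0$; without the assumption $c_3(X)\neq 0$, a smooth elliptic fibration could a priori occur and no rational curve would be visible.
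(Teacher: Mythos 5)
Your first case ($c_1(\sL)^2 \neq 0$) follows the paper's route and is fine modulo the standard references. The genuine problem is your second case. You assert that $c_1(\sL)\cdot c_2(X)=0$ would force $c_1(\sL)$ to be numerically trivial; this is false. Proposition \ref{pro:Miyaoka} only places $c_2(X)$ in $\NEb(X)\setminus\{0\}$, not in its interior, so a nonzero nef class can perfectly well be orthogonal to it --- indeed Proposition \ref{prop1}(ii) shows that \emph{every} non-semiample nef divisor on a Calabi--Yau threefold satisfies $L\cdot c_2(X)=0$, and the configuration $\nu(X,L)=1$, $L\cdot c_2(X)=0$ is exactly the open case of Conjecture \ref{con:1} that neither \cite{Og93} nor this paper can handle. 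So your appeal to Oguiso's semiampleness criterion in the $\nu=1$ branch is unjustified and the argument collapses there. (Even granting $\sL\cdot c_2(X)>0$, your exclusion of abelian fibres via ``multiplicativity of Euler numbers'' is too quick: multiplicativity fails in the presence of singular fibres, so $c_3(X)\neq 0$ does not by itself rule out an abelian surface fibration; one must argue through the singular fibres as in \cite{Og93}.)

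The paper avoids the $\nu=1$ case altogether by a trick you are missing: if the rational boundary ray $R=\R_+L$ satisfies $L^2=0$, one shows that the \emph{other} boundary ray $R'$ is automatically rational with $(L')^2\neq 0$, and then applies Theorem \ref{MT2} to $L'$. Concretely, for an ample divisor $H$ the cubic $p(t)=(L+tH)^3$ has integer coefficients and, since $L^3=L^2\cdot H=0$, a double root at $t=0$; the unique $t_0<0$ with $-(L+t_0H)\in R'$ is then a rational \emph{simple} root of $p$ (in the subcase $(L')^3=0$; if $(L')^3>0$ rationality of $R'$ follows from \cite{Wi89} or the Cone theorem), which yields both the rationality of $R'$ and $(L')^2\neq 0$. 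Replacing your second case by this reduction to the first case repairs the proof.
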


We spend a few words on the method of the proof of Theorem \ref{thm:introample} and  Theorem \ref{MT2}, given in Sections \ref{sec:ample}, \ref{sec:nu=2} and \ref{sec:semicriterion2}. For Theorem \ref{thm:introample}, arguing by contradiction and possibly replacing $\mathcal L$ by a multiple, we first observe that
$$ H^0(X,\Omega^1_X \otimes \sL^{\otimes m}) = 0\quad\text{for all }m, $$
cf.\ \cite[3.1]{Wi94}. Our method is then to study the cohomology of logarithmic differentials with poles along a smooth very ample divisor $D$ and with values in $\mathcal L^{\otimes m}$. As the final outcome, we show the crucial vanishing 
$$ H^2(X,\Omega^1_X \otimes \sL^{\otimes m}) = 0\quad\text{for large }m.$$
Then it is not difficult to see that 
\begin{equation}\label{eq:strange}
H^q(X,\Omega^p_X \otimes \sL^{\otimes m}) = 0 \quad\text{for all $p$ and $q$, and all }m \text{ with }| m | \gg 0. 
\end{equation} 
Using the Hirzebruch-Riemann-Roch, we deduce $c_3(X) = 0$. We conjecture that the vanishings \eqref{eq:strange} can never happen. 

For Theorem \ref{MT2}, the main point is to establish the nefness of logarithmic bundles 
$$ \Omega^1_X(\log D) \otimes \mathcal L^{\otimes m} $$ 
for a carefully chosen very ample divisor $D$, and for this choice of $D$ the assumption $\rho(X) = 2$ is needed, see Proposition \ref{nef}.  

In Section \ref{sec:semicriterion2} we generalise Theorem \ref{thm:introample} to the case when there exist (necessarily finitely many) surfaces $S$ such that $c_1(\mathcal L)\cdot S=0$.  

\section{Preliminaries}\label{sec:prelim}

Throughout the paper we work over the field of complex numbers $\C$. For a variety $X$, $\Pic(X)$ is the Picard group of $X$ and $N^1(X)$ is the N\'eron-Severi group of $X$. The Picard number of $X$ is $\rho(X)=\rk N^1(X)$. If $\sL$ is a nef Cartier divisor on $X$, the \emph{numerical dimension of $\sL$} is 
$$\nu(X,\sL)=\max\{k\in\N\mid \sL^k\not\equiv0\}.$$
If $D =\sum_{i=1}^k D_i$ is a reduced divisor with simple normal crossings on a smooth variety $X$, recall that we have the locally free sheaf of logarithmic differentials $\Omega^1_X(\log D)$ together with the exact \emph{residue sequences}
\begin{equation}\label{eq:residue1}
0 \to \Omega^1_X \to \Omega^1_X (\log D) \to \bigoplus_{i=1}^k\sO_{D_i} \to0,
\end{equation}
\begin{equation}\label{eq:residue2}
0 \to \Omega^1_X\big(\log (D-D_k)\big) \to   \Omega^1_X(\log D) \to \sO_{D_k} \to 0,
\end{equation}
and
\begin{align}\label{eq:residue3}
0 \to \Omega^1_X(\log D)(-D_k) &\to  \Omega^1_X\big(\log(D-D_k)\big) \\
&\to \Omega^1_{D_k}\big(\log(D-D_k)|_{D_k}\big) \to 0,\notag
\end{align}
cf.\ \cite[\S 2]{EV92}.

\subsection{Calabi-Yau threefolds}

A Calabi-Yau manifold is by definition a compact K\"ahler manifold $X$ which is simply connected and has trivial canonical bundle $\omega_X \simeq \sO_X$. A three-dimensional Calabi-Yau manifold is simply called \emph{Calabi-Yau threefold}, and thus a Calabi-Yau threefold in this paper is always smooth and projective. We notice that $h^1(X,\OO_X)=h^2(X,\OO_X)=0$ and hence $\Pic(X)\simeq N^1(X)$. 

If $\mathcal{E}$ is a vector bundle on a Calabi-Yau threefold $X$, then by the Hir\-ze\-bruch-Riemann-Roch theorem \cite[p.\ 432]{Har77} we have
\begin{equation}\label{eq:RR1}
\chi(X,\mathcal E)=\frac{1}{12}c_1(\mathcal{E})c_2(X)+\frac16\big(c_1(\mathcal{E})^3-3c_1(\mathcal{E})c_2(\mathcal{E})+3c_3(\mathcal{E})\big).
\end{equation}
In particular, for a Cartier divisor $L$ on $X$ this gives
\begin{equation}\label{eq:RR2}
\chi(X,L)=\frac16L^3+\frac{1}{12}L\cdot c_2(X).
\end{equation}
For a vector bundle $\mathcal E$ of rank $3$ and a line bundle $\mathcal L$ on $X$, \cite[Example 3.2.2]{Ful98} gives
\begin{align}\label{eq:chern}
c_1(\mathcal E\otimes\mathcal L)&=3c_1(\mathcal{L})+c_1(\mathcal{E}),\notag\\ c_2(\mathcal E\otimes\mathcal L) &=3c_1(\mathcal{L})^2+2c_1(\mathcal{E})c_1(\mathcal L)+c_2(\mathcal{E}),\\
c_3(\mathcal E\otimes\mathcal L) &=c_1(\mathcal{L})^3+c_1(\mathcal{E})c_1(\mathcal L)^2+c_2(\mathcal{E})c_1(\mathcal L)+c_3(\mathcal{E}).\notag
\end{align}

Given a locally free sheaf $\sE$, we denote its $k$-th Segre class by $s_k(\sE)$. The sign is determined in such a way that if $\sE$ has rank $r$ and $\dim X = n$, then 
$$ s_r(\sE) = c_1\big(\sO_{\bP(\sE)}(1)\big)^{n+r-1}.$$
Here $\bP(\sE) $ is defined by taking hyperplanes; in particular, $s_k(\sE) $ differs in sign from the definition in \cite[Chapter 3]{Ful98} in the case when $k$ is odd. 

\begin{pro} \label{pro:Miyaoka}
If $X$ is a Calabi-Yau threefold, then $c_2(X) \in \NEb(X)\backslash\{0\}$.
\end{pro}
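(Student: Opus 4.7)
The statement has two parts: pseudo-effectivity of $c_2(X)$, and non-vanishing. I would handle them separately.

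For the pseudo-effectivity part, the key input is Miyaoka's generic semi-positivity theorem. Since $K_X \equiv 0$ is nef, $X$ is a minimal threefold, and Miyaoka's inequality yields
$$\bigl(3c_2(X)-c_1(X)^2\bigr)\cdot H \geq 0$$
for every nef divisor $H$ on $X$. Because $c_1(X)=0$, this reduces to $c_2(X)\cdot H \geq 0$ for every nef $H$. By Kleiman's duality $\Nef(X)^{\vee}=\NEb(X)$ between the nef cone in $N^1(X)_{\R}$ and the Mori cone in $N_1(X)_{\R}$, this non-negativity is precisely the statement $c_2(X)\in\NEb(X)$.

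For the non-vanishing, I would argue by contradiction. Suppose $c_2(X)=0$. By Yau's solution of the Calabi conjecture, $X$ carries a Ricci-flat K\"ahler metric, with respect to which $T_X$ is Hermite-Einstein and in particular polystable with respect to any polarization. The Bogomolov discriminant $\Delta(T_X)=2c_2(X)-c_1(X)^2$ then vanishes, and the equality case of the Bogomolov-L\"ubke inequality forces the Hermite-Einstein connection on $T_X$ to be projectively flat; combined with $c_1(X)=0$, the bundle $T_X$ is genuinely flat. Hence the universal cover of $X$ is isomorphic to $\C^3$, and $X$ is a complex torus, contradicting the assumption $\pi_1(X)=1$.

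The harder step is the non-vanishing: the sign of $c_2(X)\cdot H$ is immediate from the algebraic Miyaoka inequality, but ruling out the degenerate equality $c_2(X)=0$ must pass through the analytic equality-case characterization of Bogomolov-L\"ubke-Kobayashi together with the topological rigidity of simply connected Calabi-Yau threefolds. A more algebraic alternative would be to invoke the Beauville-Bogomolov decomposition of compact K\"ahler manifolds with vanishing first Chern class to reduce to irreducible factors and verify $c_2\neq 0$ on each, but the structural input there ultimately rests on the same analytic ingredients.
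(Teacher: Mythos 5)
Your proposal is correct and follows essentially the same route as the paper, which simply cites Miyaoka's theorem for $c_2(X)\in\NEb(X)$ and Yau's theorem (via the equality case of the Chern class inequality in Kobayashi's book, IV.4.15) for $c_2(X)\neq 0$; you have just written out the content of those two citations.
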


\begin{proof}
We have $c_2(X) \in \NEb(X)$ by \cite{Mi87} and $c_2(X) \ne 0$ by Yau's theorem, see for example, \cite[IV.4.15]{Kob87}. 
\end{proof}

\begin{pro}\label{prop1} 
Let $X$ be a Calabi-Yau threefold, let $L$ be a nef Cartier divisor on $X$ which is not semiample, and let $D_1$ and $D_2$ be prime divisors on $X$ such that $D_1+D_2$ has simple normal crossings. Denote 
$$\mathcal E_m=\Omega^1_X(\log D_1) \otimes \OO_X(mL)$$
and 
$$\mathcal E_m'=\Omega^1_X\big(\log (D_1+D_2)\big) \otimes \OO_X(mL).$$
Then 
\begin{enumerate} 
\item[(i)] $\kappa (X,L) = {-} \infty$,
\item[(ii)] $L^3 = L \cdot c_2(X) = 0$,
\item[(iii)] $\chi\big(X,\OO_X(mL)\big)=0$ for every $m$,
\item[(iv)] $\chi\big(X,\Omega^1_X\otimes \OO_X(mL)\big)= - \frac{1}{2}c_3(X)$ for every $m$,
\item[(v)] for every $m$ we have
$$ \chi(X,\mathcal E_m) = \frac12m^2L^2\cdot D -\frac{1}{2}mL\cdot D^2 + \frac{1}{6}D^3 + \frac{1}{12} D \cdot c_2(X)  -\frac{1}{2} c_3(X),$$
\item[(vi)] for every $m$ we have 
$$s_3(\mathcal E_m')=10m^2L^2 \cdot (D_1+D_2) +5mL\cdot D_1\cdot D_2 - (D_1+D_2) \cdot c_2(X) - c_3(X).$$ 
\end{enumerate}
\end{pro}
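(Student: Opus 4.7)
The plan is to dispatch (i)-(ii) by invoking the deep results on Calabi-Yau threefolds already flagged in the introduction, and then to derive (iii)-(vi) by routine Hirzebruch-Riemann-Roch and Chern-class computations on the twisted logarithmic bundle. The main obstacle is (ii), specifically $L \cdot c_2(X) = 0$; once this is in hand, everything else is bookkeeping.

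For (i), if $H^0(X, \sO_X(mL)) \neq 0$ for some $m > 0$, then the combination of \cite{Og93,KMM94,Kaw92} cited in the introduction would force $L$ to be semiample, contrary to hypothesis; hence $\kappa(X, L) = -\infty$. For (ii), a nef big divisor on a projective threefold has nonnegative Kodaira dimension, so (i) gives $L^3 = 0$. The inequality $L \cdot c_2(X) \geq 0$ is Proposition \ref{pro:Miyaoka}, while the reverse inequality follows from the theorem of \cite{Og93} that a nef line bundle on a Calabi-Yau threefold with $L \cdot c_2(X) > 0$ satisfies $\kappa(X,L) \geq 0$, again contradicting (i).

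Part (iii) is immediate from \eqref{eq:RR2} and (ii). For (iv), apply \eqref{eq:RR1} to the rank three bundle $\Omega^1_X \otimes \sO_X(mL)$, using $c_1(\Omega^1_X) = 0$, $c_2(\Omega^1_X) = c_2(X)$, $c_3(\Omega^1_X) = -c_3(X)$, together with the twist formulas \eqref{eq:chern}; every monomial that contains $L^3$ or $L \cdot c_2(X)$ vanishes by (ii), so only the term $-\tfrac12 c_3(X)$ survives.

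For (v), tensor the residue sequence \eqref{eq:residue1} (a single summand, since $D$ is irreducible) with $\sO_X(mL)$ and take Euler characteristics:
$$\chi(X, \sE_m) = \chi\bigl(X, \Omega^1_X \otimes \sO_X(mL)\bigr) + \chi\bigl(D, \sO_D(mL|_D)\bigr).$$
The first summand is $-\tfrac12 c_3(X)$ by (iv), while the second is a Hirzebruch-Riemann-Roch calculation on the smooth surface $D$, using the adjunction $K_D = D|_D$ (as $K_X = 0$) and the normal bundle sequence, which gives $c_2(T_D) = c_2(X)|_D + (D|_D)^2$. Finally, for (vi), compute $c(\Omega^1_X(\log D)) = c(\Omega^1_X)/c(\sO_X(-D))$ to obtain the Chern classes of $\Omega^1_X(\log D)$, apply the twist formulas \eqref{eq:chern} to pass to $\sE_m$, and substitute into the Segre class formula $s_3 = c_1^3 - 2 c_1 c_2 + c_3$ in the paper's sign convention; simplifying via $L^3 = L \cdot c_2(X) = 0$ yields the claimed expression, with the coefficient $10$ of $m^2 L^2 \cdot D$ arising as $27 - 18 + 1$ from the three contributions.
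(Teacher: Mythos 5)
Your proposal is correct and follows essentially the same route as the paper: (i) and (ii) are dispatched by the same external inputs (log abundance/\cite{Og93}, the basepoint free theorem for $L^3=0$, and Miyaoka plus \cite[2.7]{Og93} for $L\cdot c_2(X)=0$), and (iii), (iv), (vi) are the identical Chern-class and Riemann--Roch computations, with the Chern classes of $\Omega^1_X(\log D)$ obtained from the same residue sequence. The only (cosmetic) difference is in (v), where you split $\chi(\sE_m)$ additively over the residue sequence and use Noether's formula on $D$ instead of computing $c_\bullet(\sE_m)$ and applying threefold Riemann--Roch directly; both reduce to the same arithmetic and give the stated formula.
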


\begin{proof}
The statement (i) is \cite{Og93}, or it can be viewed as a consequence of log abundance for threefolds \cite{KMM94}. Statement (ii) is the basepoint free theorem together with \cite[2.7]{Og93}. From (ii) and \eqref{eq:RR2} we obtain (iii).

Set $\mathcal F_m=\Omega^1_X \otimes \OO_X(mL)$.  Using \eqref{eq:chern} and (ii), we calculate:
$$c_1(\mathcal F_m)=3mL,\quad c_2(\mathcal F_m)=3m^2L^2+c_2(X),\quad c_3(\mathcal F_m)= - c_3(X),$$
which gives (iv) by \eqref{eq:RR1} and (ii). 

Since $c_j(\OO_D)=D^j$ for $1 \leq j \leq 3$,  from \eqref{eq:residue1} we obtain
\begin{align*}
c_1\big(\Omega_X^1(\log D)\big)&=D,\quad c_2\big(\Omega_X^1(\log D)\big)=c_2(X)  + D^2,\\ 
c_3\big(\Omega_X^1(\log D)\big)&=c_2(X)\cdot D -c_3(X)+ D^3.
\end{align*}
Using \eqref{eq:chern} and (ii), we calculate:
\begin{align*}
c_1(\mathcal E_m)&=3mL+D,\quad c_2(\mathcal E_m)=3m^2L^2+2mL\cdot D+c_2(X) + D^2,\\
c_3(\mathcal E_m)&=m^2L^2\cdot D+m L \cdot D^2+c_2(X)\cdot D-c_3(X) + D^3, 
\end{align*}
which gives (v) by \eqref{eq:RR1} and (ii). 

Finally, similarly as above we obtain
\begin{align*}
c_1\big(\Omega_X^1(\log (D_1+D_2))\big)&=D_1+D_2,\\
c_2\big(\Omega_X^1(\log (D_1+D_2))\big)&=c_2(X)  + D_1^2 + D_2^2 + D_1\cdot D_2,\\ 
c_3\big(\Omega_X^1(\log (D_1+D_2))\big)&=c_2(X)\cdot (D_1+D_2) -c_3(X)\\
&+ D_1^3+D_2^3+D_1^2\cdot D_2+D_1\cdot D_2^2,
\end{align*}
and hence:
\begin{align*}
c_1(\mathcal E_m')&=3mL+D_1+D_2,\\
c_2(\mathcal E_m')&=3m^2L^2+2mL\cdot (D_1+D_2)+c_2(X) + D_1^2 + D_2^2 + D_1\cdot D_2,\\
c_3(\mathcal E_m')&=m^2L^2\cdot (D_1+D_2)+m L \cdot (D_1^2+D_2^2+D_1\cdot D_2)\\
&+c_2(X)\cdot (D_1+D_2)-c_3(X) + D_1^3 + D_2^3 + D_1^2\cdot D_2 + D_1\cdot D_2^2. 
\end{align*}
Now (vi) follows from the formulas above, from (ii) and from the formula 
$$s_3(\mathcal E_m)= c_1(\mathcal E_m)^3 - 2c_1(\mathcal E_m) \cdot c_2(\mathcal E_m) + c_3(\mathcal E_m).$$ 
This finishes the proof.
\end{proof}

\begin{lem} \label{lem1}  
Let $X$ be a Calabi-Yau threefold with $\rho(X) = 2$, and assume that there exists a nef Cartier divisor $L$ on $X$ which is not semiample. If $S \subseteq  X$ is an irreducible surface, then $S \cdot c_2(X) > 0$. 
\end{lem}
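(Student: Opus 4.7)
The plan is to use Proposition \ref{prop1}(ii) together with Proposition \ref{pro:Miyaoka} to locate $c_2(X)$ in the two-dimensional space $N_1(X)_\R$, and then to combine this with the irreducibility of $S$ to force a strict inequality.

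First I observe that $L\not\equiv 0$ (otherwise $L$ would be trivial, hence semiample) and that $L$ is not ample (every ample divisor is semiample), so $L$ lies on the boundary of the nef cone. Fix any ample class $H$. Then $\{L,H\}$ is an $\R$-basis of $N^1(X)_\R$, and writing $S=aL+bH$ we obtain $S\cdot c_2(X)=b\,H\cdot c_2(X)$ from $L\cdot c_2(X)=0$ (Proposition \ref{prop1}(ii)). Since $c_2(X)\in\NEb(X)\setminus\{0\}$ by Proposition \ref{pro:Miyaoka} and $H$ is ample, Kleiman's criterion yields $H\cdot c_2(X)>0$, so the lemma reduces to showing $b>0$.

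The non-vanishing $b\ne 0$ means exactly that $S$ is not numerically proportional to $L$. Since $\Pic(X)=N^1(X)$ is torsion free on a Calabi-Yau threefold, a proportionality would give a linear equivalence $nS\sim mL$ with $n>0$ and $m\in\Z$, so $h^0(X,mL)=h^0(X,nS)\geq 1$. The case $m>0$ contradicts $\kappa(X,L)=-\infty$ of Proposition \ref{prop1}(i), and $m=0$ is impossible because a principal divisor cannot be effective and nonzero. For $m<0$ I write $-|m|L\sim E$ with $E$ effective; pairing the relation $E+|m|L\sim 0$ with $H^2$ forces $E=0$ (since $E\cdot H^2>0$ for nonzero effective $E$ while $L\cdot H^2\geq 0$), hence $|m|L\sim 0$ in the torsion-free group $\Pic(X)$ and $L=0$, a contradiction.

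To prove $b\geq 0$, I use that, by Proposition \ref{prop1}(ii),
\begin{gather*}
S\cdot L^2 = b(L^2\cdot H) = (L|_S)^2 \geq 0,\\
S\cdot L\cdot H = a(L^2\cdot H)+b(L\cdot H^2) = (L|_S)\cdot (H|_S) \geq 0,
\end{gather*}
since $L|_S$ is nef and $H|_S$ is ample. If $L^2\not\equiv 0$ in $N_1(X)$, then $L^2\in\NEb(X)\setminus\{0\}$ (it is the limit of the effective classes $(L+\varepsilon H)^2$), so $L^2\cdot H>0$ by Kleiman and the first identity gives $b\geq 0$. Otherwise $L^2\cdot H=0$, the second identity becomes $b(L\cdot H^2)\geq 0$, and $L\cdot H^2>0$ because the hard Lefschetz isomorphism $\cdot H\colon N^1(X)_\R\to N_1(X)_\R$ puts $L\cdot H$ in $\NEb(X)\setminus\{0\}$, so Kleiman applies again. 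Combined with the previous step, $b>0$, whence $S\cdot c_2(X)=bH\cdot c_2(X)>0$.

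The main obstacle, as I see it, is the degenerate case $L^2\equiv 0$, where the natural non-negativity $S\cdot L^2\geq 0$ carries no information and one must pass to the identity $S\cdot L\cdot H=b(L\cdot H^2)$ and invoke hard Lefschetz to guarantee $L\cdot H^2>0$.
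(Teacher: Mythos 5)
Your proof is correct and takes essentially the same route as the paper: write $S=aL+bH$ in the rank-two N\'eron--Severi group, use $L\cdot c_2(X)=0$ together with $c_2(X)\in\NEb(X)\setminus\{0\}$ to reduce the statement to $b>0$. The only divergence is in how that sign is obtained --- the paper disposes of $b\le 0$ in one stroke (it would make $L$ big or $\Q$-effective, contradicting $\kappa(X,L)=-\infty$ from Proposition~\ref{prop1}(i)), whereas you separately prove $b\neq 0$ by the same effectivity argument and $b\ge 0$ via the nef-cycle inequalities $S\cdot L^2\ge 0$, resp.\ $S\cdot L\cdot H\ge 0$; both are valid, the paper's version being shorter and avoiding your case distinction on whether $L^2\equiv 0$.
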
 

\begin{proof} 
Fix an ample effective divisor $H$ on $X$.  Since $\rho(X)=2$, we can write
$$ S = \alpha L + \beta H \quad\text{with }\alpha, \beta\in\Q.$$
Notice that $\beta > 0$, since otherwise $\alpha L $ is effective, that is $\kappa(X,L)\geq0$, which contradicts Proposition \ref{prop1}(i). Hence
$$ S \cdot c_2(X) = \beta H \cdot c_2(X) > 0,$$
by Proposition \ref{prop1}(ii) and Proposition \ref{pro:Miyaoka}.
\end{proof} 

\begin{lem}\label{lem:surface}
Let $X$ be a Calabi-Yau threefold with $\rho(X) = 2$, and assume that there exists a nef Cartier divisor $L$ on $X$ with $\nu(X,L)=2$ which is not semiample. Then for every surface $S$ on $X$ we have $L^2\cdot S>0$.
\end{lem}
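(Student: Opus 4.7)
The plan is to follow the strategy used in Lemma \ref{lem1}. Fix an ample divisor $H$ on $X$. Since $\rho(X)=2$ and $\Pic(X)\simeq N^1(X)$ on a Calabi-Yau threefold, one can write numerically
$$[S] = \alpha L + \beta H\quad\text{with } \alpha,\beta\in\Q.$$
Using $L^3=0$ from Proposition \ref{prop1}(ii), this gives $L^2\cdot S = \beta\, L^2\cdot H$, so the problem reduces to establishing $\beta>0$ and $L^2\cdot H>0$.

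For the sign of $\beta$, I would argue exactly as in Lemma \ref{lem1}. Suppose for contradiction that $\beta\leq 0$. Then $\alpha L$ is numerically equivalent to the effective $\Q$-divisor $S + (-\beta)H$. Intersecting with $H^2$ and using that $H$ is ample and $S$ is an effective surface, we obtain
$$\alpha(L\cdot H^2) = S\cdot H^2 + (-\beta)H^3 > 0.$$
Since $L$ is nef, $L\cdot H^2\geq 0$, so this forces $\alpha>0$ (and incidentally also $L\cdot H^2>0$). Via the isomorphism $\Pic(X)\simeq N^1(X)$, some positive multiple of $L$ is then linearly equivalent to an effective divisor, contradicting $\kappa(X,L)=-\infty$ from Proposition \ref{prop1}(i).

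For the positivity $L^2\cdot H>0$, nefness of the restriction $L|_H$ gives $L^2\cdot H = (L|_H)^2\geq 0$. If equality held, then the assumption $\rho(X)=2$ means every divisor class is a $\Q$-linear combination of $L$ and $H$, so combined with $L^3=0$ we would deduce $L^2\cdot D=0$ for every divisor $D$, i.e.\ $L^2\equiv 0$. This contradicts the hypothesis $\nu(X,L)=2$. Combining the two ingredients, $L^2\cdot S = \beta(L^2\cdot H) > 0$, as desired.

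There is no substantial obstacle here beyond being careful with numerical versus linear equivalence (handled by $\Pic(X)\simeq N^1(X)$) and observing that the two potential vanishings $L^3=0$ and $L^2\cdot H=0$ together are incompatible with $\nu(X,L)=2$ under $\rho(X)=2$.
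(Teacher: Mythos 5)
Your proof is correct, but it is organised differently from the paper's own argument, which is shorter: assuming $L^2\cdot S=0$, the paper notes that $S$ cannot be numerically proportional to $L$ (otherwise $L$ would be $\Q$-effective, contradicting Proposition \ref{prop1}(i)), so $\{L,S\}$ is a basis of $N^1(X)_\R$; since $L^3=0$ and $L^2\cdot S=0$, the $1$-cycle class $L^2$ pairs to zero with an entire basis and hence $L^2\equiv0$, contradicting $\nu(X,L)=2$. You instead expand $S$ in the basis $\{L,H\}$ and prove the two inequalities $\beta>0$ (by the effectivity argument already used in Lemma \ref{lem1}) and $L^2\cdot H>0$ separately. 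Both arguments rest on the same linear-algebra fact --- the functional $L^2$ on $N^1(X)_\R\cong\R^2$ kills $L$ but is nonzero, so its kernel is exactly $\R L$, and no effective surface class can lie on $\R L$ --- so the mathematical content is the same; your version costs a little more writing but yields the strict inequality directly rather than via ``$\neq 0$ plus nefness of $L$''. One point worth making explicit in your write-up: $L$ and $H$ are linearly independent in $N^1(X)_\R$ (needed both to write $[S]=\alpha L+\beta H$ with $\alpha,\beta\in\Q$ and to conclude $L^2\equiv0$ from $L^3=L^2\cdot H=0$); this holds because $L\not\equiv0$ by $\nu(X,L)=2$, while $L$ is not ample since it is not semiample.
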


\begin{proof}
Assume that $S$ is a surface on $X$ with $L^2\cdot S=0$. Since $L$ is not semiample, $S$ is not proportional to $L$ by Proposition \ref{prop1}(i). Therefore, as $\rho(X)=2$, the divisors $L$ and $S$ form a basis of $N^1(X)_\R$.  Since also $L^3 = 0$, the $1$-cycle $L^2$ is orthogonal to $L$ and $S$, hence $L^2\equiv 0$, a contradiction to
the assumption $\nu(X,L) = 2$. 
\end{proof}

\begin{lem} \label{generic} 
Let $X$ be a Calabi-Yau threefold with $\rho(X) = 2$, and assume that there exists a nef Cartier divisor $L$ on $X$ with $\nu(X,L)=2$ which is not semiample. \begin{enumerate}
\item[(a)] There exist only countably many curves $C_i$ on $X$ such that $L \cdot C_i = 0$.
\item[(b)] If $A$ is a basepoint free ample divisor and if $D$ is a very general member of the linear system $| A | $, then $C_i \not \subseteq D$ for all $i$, and $L|_D$ is ample.
\end{enumerate}
\end{lem}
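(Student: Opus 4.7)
The plan is to prove (a) by contradiction using the countability of components of the Hilbert scheme together with Lemma \ref{lem:surface}, and then derive (b) from (a) by Bertini-style avoidance and the Nakai--Moishezon ampleness criterion.

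For (a), assume for contradiction that there are uncountably many irreducible reduced curves $C$ on $X$ with $L \cdot C = 0$. (Reducible or non-reduced curves satisfying this split into irreducible reduced components with the same property, since $L$ is nef.) As the Hilbert scheme of $X$ has only countably many irreducible components, one component $T$ parametrizing such curves must be positive-dimensional. Choose any irreducible curve $T' \subseteq T$ and let $e : \mathcal{C}' \to X$ be the evaluation of the restricted universal family $\mathcal{C}' \to T'$, with image $Y' = e(\mathcal{C}')$. Since the $C_t$ for $t \in T'$ are pairwise distinct irreducible reduced curves with a common Hilbert polynomial, $Y'$ cannot be $1$-dimensional: otherwise each $C_t \subseteq Y'$ would force $C_t = Y'$ as reduced schemes, producing only finitely many subschemes on $Y'$ with fixed Hilbert polynomial and contradicting $\dim T' = 1$. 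Thus $Y'$ is an irreducible surface in $X$ swept out by the $C_t$.

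By Lemma \ref{lem:surface}, $L^2 \cdot Y' > 0$. Choose a resolution $\mu : \tilde Y' \to Y'$; then $\tilde L := \mu^{*}(L|_{Y'})$ is nef on $\tilde Y'$ with $\tilde L^2 > 0$, hence big and nef. Kodaira's lemma yields a decomposition $\tilde L = A + E$ with $A$ an ample $\Q$-divisor and $E$ an effective $\Q$-divisor on $\tilde Y'$. Since the strict transforms $\tilde C_t$ cover $\tilde Y'$, a general $\tilde C_t$ is not contained in $\Supp(E)$, so $E \cdot \tilde C_t \geq 0$ and $A \cdot \tilde C_t > 0$. This gives $L \cdot C_t = \tilde L \cdot \tilde C_t > 0$, contradicting $L \cdot C_t = 0$.

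For (b), since $A$ is basepoint free, for each of the countably many curves $C_i$ from (a) the set $\{D \in |A| : C_i \subseteq D\}$ is a proper linear subspace of $|A|$: there exists a section of $A$ not vanishing identically on $C_i$, yielding a $D \in |A|$ not containing $C_i$. Hence a very general $D \in |A|$ is smooth by Bertini and contains no $C_i$. Now $L|_D$ is nef, $(L|_D)^2 = L^2 \cdot A > 0$ by Lemma \ref{lem:surface}, and for every irreducible curve $C' \subseteq D$ we have $L|_D \cdot C' = L \cdot C' \geq 0$, with equality only if $C' = C_i$ for some $i$, which is excluded. The Nakai--Moishezon criterion thus yields that $L|_D$ is ample. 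The main delicate point is the ruling out of $\dim Y' = 1$; once the image of the subfamily is known to be a surface, Lemma \ref{lem:surface} combined with Kodaira's lemma and the movability of the $\tilde C_t$ on $\tilde Y'$ closes the argument.
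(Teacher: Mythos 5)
Your proof is correct and follows essentially the same route as the paper: countability of Hilbert scheme components produces a one-dimensional family of $L$-trivial curves sweeping out a surface, Kodaira's lemma combined with Lemma \ref{lem:surface} gives the contradiction for (a), and (b) follows from (a) together with the Nakai--Moishezon criterion and $L^2\cdot D>0$. The only cosmetic differences are that you pass to a resolution of the swept-out surface before applying Kodaira's lemma and run the bigness dichotomy in the opposite logical order (the paper shows $L|_S$ is not big and then contradicts Lemma \ref{lem:surface}, whereas you use Lemma \ref{lem:surface} to get bigness and then contradict $L\cdot C_t=0$), which amounts to the same argument.
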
 

\begin{proof} 
Assume (a) does not hold. Then, since the Chow scheme of $X$ has only countably many components, there exists a one-dimensional family $(C_t)_{t \in T}$ of generically irreducible curves such that 
\begin{equation}\label{eq:78}
L \cdot C_t = 0\quad\text{for all }t\in T. 
\end{equation}
Let $S$ be the irreducible surface covered by the curves $C_t$. Then $L|_S$ is a nef divisor which is not big: 
otherwise, by Kodaira's trick we could write $L|_S\sim_\Q H+E$ for ample, respectively effective, $\Q$-divisors $H$ and $E$ on $S$. But then \eqref{eq:78} implies $E\cdot C_t<0$ for all $t\in T$, hence $C_t\subseteq\Supp E$, a contradiction.
Therefore $(L|_S)^2 = 0$, a contradiction with Lemma \ref{lem:surface}. This shows (a). 

For (b), if $D \in | A | $ is general, then clearly $C_i \not \subseteq D$ by (a), and $L|_D$ is ample by the Nakai-Moishezon criterion due to $L^2 \cdot D > 0$ by Lemma \ref{lem:surface}.
\end{proof} 

\subsection{Nef reduction} 

We need the notion of the nef reduction of a nef divisor. The following is the main result of \cite{BCE+}.

\begin{thm}\label{thm:nefreduction}
Let $L$ be a nef divisor on a normal projective variety $X$. Then there exists an almost holomorphic dominant rational map $f\colon X\dashrightarrow Y$ with connected fibres, called \emph{nef reduction of $L$}, such that:
\begin{enumerate}
\item[(i)] $L$ is numerically trivial on all compact fibres $F$ of $f$ with $\dim F=\dim X-\dim Y$,
\item[(ii)] for every very general point $x\in X$ and every irreducible curve $C$ on $X$ passing through $x$ and not contracted by $f$, we have $L\cdot C>0$.
\end{enumerate}
The map $f$ is unique up to birational equivalence of $Y$.
\end{thm}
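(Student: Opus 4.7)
The plan is to produce $f$ as the quotient of $X$ by the equivalence relation generated by irreducible curves $C$ with $L\cdot C=0$. First, one observes that the family of $L$-trivial irreducible curves on $X$ is parametrized by a countable union of closed subvarieties of the Chow variety $\mathrm{Chow}(X)$: on any irreducible component of $\mathrm{Chow}(X)$ the intersection number $L\cdot C_t$ is locally constant, so numerical triviality cuts out a union of components. Call an irreducible subvariety $V\subseteq X$ \emph{chain-$L$-connected} if any two of its points can be joined by a chain of irreducible curves $C_i\subseteq V$ with $L\cdot C_i=0$, and for a very general $x\in X$ let $F_x$ denote the maximal chain-$L$-connected subvariety of $X$ containing $x$.

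The key input is that $\dim F_x$ is generically constant, equal to some $d\geq 0$. This is extracted by a dimension count: since only countably many components of $\mathrm{Chow}(X)$ parametrize $L$-trivial subvarieties, there is a well-defined maximum of $\dim F_x$ on a very general locus, and the corresponding $F_x$ are sweeping out $X$ as $x$ varies. One then applies a Campana-style quotient construction to the resulting algebraic family $\{F_x\}$: after replacing $X$ by a suitable model and taking the Stein factorization of the tautological map from the universal family, one obtains a normal projective variety $Y$ with $\dim Y=\dim X-d$ and an almost holomorphic dominant rational map $f\colon X\dashrightarrow Y$ with connected fibres whose general fibre through $x$ is precisely $F_x$.

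Property (i) would then follow because a compact fibre $F$ of $f$ with $\dim F=d$ is a flat (Chow-theoretic) limit of the general fibres $F_x$, and numerical triviality of $L$ restricted to members of a flat family parametrized by an irreducible base is a closed condition. Property (ii) is immediate from the construction: any irreducible curve $C$ through a very general point $x\in X$ with $L\cdot C=0$ lies in $F_x$ by maximality of the chain-connected component, hence is contracted by $f$. Uniqueness up to birational equivalence of $Y$ is a standard consequence of (ii), which pins down the general fibres of $f$ set-theoretically.

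The hard part is the construction of the family $\{F_x\}$ as an algebraic family, equivalently showing that the chain-equivalence relation becomes an algebraic equivalence relation on a dense open subset. One must argue, via a Noetherian induction on the countably many Chow components of $L$-trivial subvarieties, that the iterative procedure of adjoining new $L$-trivial curves to an already-constructed chain-component stabilizes after finitely many steps over a very general point, producing a single algebraic family; only then can a quotient map be extracted. This algebraicity statement, rather than the verification of (i)--(ii), is the real content of the theorem.
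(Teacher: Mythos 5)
First, a point of reference: the paper does not prove this statement at all --- Theorem \ref{thm:nefreduction} is quoted verbatim as the main result of \cite{BCE+}, so there is no in-paper proof to compare against. That said, your outline does follow the actual strategy of \cite{BCE+}: parametrize the $L$-trivial curves by countably many components of the Chow variety, form the equivalence relation generated by chains of such curves, and apply a Campana-type quotient theorem to obtain an almost holomorphic map whose general fibre is the chain-connected component $F_x$. You are also right that the algebraicity/stabilization of the quotient construction is genuinely hard; in \cite{BCE+} this is outsourced to Campana's theorem on meromorphic quotients by covering families, and it is legitimate to treat it as a black box.

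There is, however, a genuine gap in your deduction of (i) that is independent of the quotient machinery. By construction the general fibre $F_x$ is chain-connected by $L$-trivial curves, but property (i) asserts that $L|_{F_x}$ is \emph{numerically trivial}, i.e.\ $L\cdot C=0$ for \emph{every} irreducible curve $C\subseteq F_x$, not just for the curves used to build the chains. This implication is false without nefness: on $X=\bP^1\times\bP^1$ with $L=f_1-f_2$ (the difference of the two rulings), the diagonals form a covering family of connected $L$-trivial curves joining any two points, so the putative reduction is a point, yet $L\not\equiv0$. Establishing $L|_{F_x}\equiv0$ is precisely the content of the Key Lemma of \cite{BCE+}, whose proof uses nefness of $L$ in an essential way --- roughly, one restricts to surfaces swept out by the $L$-trivial curves over curves in the parameter space and applies the Hodge index theorem to force numerical triviality there, then propagates along the chains by induction. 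Your argument for (i) (passing from general to special compact fibres by flat limits, using that $L^k\cdot F_t\cdot H^{d-k}$ is constant in the family and that a nef class $M$ on a projective $V$ with $M\cdot H^{\dim V-1}=0$ is numerically trivial) is fine, but it presupposes the statement on general fibres, which your construction does not deliver. As written, nefness of $L$ is never actually used anywhere in your proof, which is the clearest sign that a key step is missing.
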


The following is an immediate consequence.

\begin{thm} \label{thm:applnefred} 
Let $X$ be a Calabi-Yau threefold and let $L$ be a nef divisor on $X$. Let $(C_t)_{t \in T}$ be a family of generically irreducible curves (in the Chow scheme) with $T$ an irreducible compact variety of dimension $2$ such that $L \cdot C_t = 0$ for all $t$. Let $ Y = \bigcup\nolimits_{t \in T} C_t$.
\begin{enumerate}
\item[(i)] If $Y$ is a surface, then $L \cdot Y = 0$.
\item[(ii)] Suppose additionally that $\nu(X,L) = 2$. If $Y=X$, then $L$ is semiample.
\end{enumerate} 
\end{thm}

\begin{proof}
For (i), let $\eta\colon\widetilde Y \to Y$ be the normalisation. By Theorem \ref{thm:nefreduction} applied to $\widetilde Y$, the divisor $\eta^*(L|_Y) \equiv 0$, hence $L|_Y \equiv 0$, so that $L \cdot Y = 0$.

For (ii), let $\phi\colon X \dasharrow S$ be a nef reduction of $L$; in particular, $\dim \phi(C_t) = 0 $ for each $t$ such that $C_t$ is not contained in the indeterminacy locus of $\phi$. Clearly $\dim S < 3$. If $\dim S = 0$, then $L\equiv 0$, hence $L\sim_\Q 0$. If $\dim S = 1$, then $\phi$ is holomorphic and by \cite[Theorem 1.3]{Leh15} we have $L \sim_\Q \phi^*A$, where $A$ is ample on $S \simeq \PS^1$. 

Finally, if $\dim S = 2$, let $q\colon\mathcal C \to \widetilde T$ be the normalised graph of the family $(C_t)_{t \in T}$ with projection $p\colon \mathcal C \to X$. We may even assume $\widetilde T$ to be smooth, but $\mathcal C$ is only normal. Since $\dim S = 2$ and $\phi$ is almost holomorphic, there is a unique curve $C_t$ through the general point $x \in X$. Hence $p$ is birational. Let $\pi\colon \widehat {\mathcal C} \to  {\mathcal C}$ be a desingularisation; hence $p \circ \pi\colon \widehat {\mathcal C} \to X$ is a birational map between projective manifolds. Consequently,
$$ H^1\big(\widehat { \mathcal C}, \OO_{\widehat {\mathcal C}}\big) \simeq H^1(X,\OO_X) = 0.$$ 
Then the Leray spectral for $\pi$ yields $H^1(\mathcal C,\OO_{\mathcal C}) = 0$. By \cite[Theorem 1.2]{Leh15}, we then have $p^*L \sim_\Q q^*A$, where $A$ is a nef Cartier divisor on the surface $\widetilde T$. Since $\nu(X,L) = 2$, the divisor $A$ is big, hence $\kappa(X,L) = 2$ and $L$ is semiample by Proposition \ref{prop1}. 
\end{proof} 

\begin{pro} \label{pro:trivialsurface} 
Let $X$ be a Calabi-Yau threefold and let $L$ be a nef divisor on $X$ which is not semiample such that $\nu(X,L) = 2$. Then the following are equivalent.
\begin{enumerate}
\item[(i)] There exists a very ample divisor $H$ on $X$ such that $L|_D$ is ample for a general $D \in | H |$.
\item[(ii)] For every irreducible surface $S \subseteq X$ we have $L \cdot S \not\equiv 0$. 
\end{enumerate}
\end{pro}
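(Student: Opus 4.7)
My plan is to prove the two implications separately, relying on Theorem \ref{thm:applnefred} to analyse the locus of $L$-trivial curves. For $(i) \Rightarrow (ii)$, suppose for contradiction that an irreducible surface $S \subseteq X$ satisfies $L \cdot S \equiv 0$ in $N_1(X)$, and take $H$ and a general $D \in |H|$ as in $(i)$. Since $H$ is very ample and $D$ is general, $S \not\subseteq D$, so $D \cap S$ is a non-zero effective $1$-cycle in $D$ with $L \cdot (D \cap S) = (L \cdot S) \cdot H = 0$; this contradicts the ampleness of $L|_D$ via Nakai--Moishezon applied on the surface $D$.

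For $(ii) \Rightarrow (i)$, I fix an arbitrary very ample divisor $H$ on $X$ and show that $L|_D$ is ample for very general $D \in |H|$; since ampleness is open in the family $|H|$, this implies the same for general $D$. By Bertini a general $D$ is smooth and irreducible, so Nakai--Moishezon reduces the task to showing $(L|_D)^2 > 0$ and $L \cdot C > 0$ for every irreducible curve $C \subseteq D$. The first inequality is immediate: $\nu(X,L) = 2$ forces $L^2 \not\equiv 0$, and $L^2$ is pseudo-effective as the limit of the squares of the ample $\mathbb{Q}$-divisors $L + \tfrac{1}{n} H$, so Kleiman's criterion yields $L^2 \cdot H > 0$, hence $(L|_D)^2 > 0$.

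To exclude $L$-trivial curves on a very general $D$, let $Z \subseteq X$ be the union of all irreducible curves $C$ with $L \cdot C = 0$. Parametrizing these curves by the (countably many) components $T_k$ of the Hilbert scheme of $X$ that consist entirely of $L$-trivial curves, $Z = \bigcup_k Z_k$ is a countable union of closed subvarieties. No $T_k$ can have dimension $\ge 2$: otherwise, either Theorem \ref{thm:applnefred}(i) forces $L$ to be semiample, contradicting our hypothesis, or, if the resulting two-parameter family covers only a surface $Y$, Theorem \ref{thm:applnefred}(ii) produces $L \cdot Y \equiv 0$, contradicting hypothesis $(ii)$. Hence each $Z_k$ is either a surface $S_j$ swept by a one-parameter family of $L$-trivial curves, or a single isolated $L$-trivial curve $C_i$. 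For each such $S_j$, $(ii)$ gives $L \cdot S_j \not\equiv 0$; since $L \cdot S_j$ is pseudo-effective (nef divisor intersected with effective divisor), Kleiman yields $L \cdot S_j \cdot H > 0$, and Bertini on the very ample linear system $|H|_{S_j}$ ensures that, for general $D$, the intersection $D \cap S_j$ is irreducible with $L \cdot (D \cap S_j) = L \cdot S_j \cdot H > 0$; in particular no component of $D \cap S_j$ is $L$-trivial. Each condition requiring $D$ to contain a fixed $C_i$ is a proper closed subset of $|H|$, so a very general $D$ avoids every $C_i$. Combining these observations, every curve on a very general $D$ satisfies $L \cdot C > 0$, completing the proof. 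The principal subtlety is establishing the isolated-versus-surface-covering dichotomy for the $L$-trivial locus via Theorem \ref{thm:applnefred} and hypothesis $(ii)$.
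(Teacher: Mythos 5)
Your proof is correct and rests on the same two pillars as the paper's: the Nakai--Moishezon criterion on a general member $D$ and Theorem \ref{thm:applnefred} to rule out two-dimensional families of $L$-trivial curves. The paper's version of (ii)$\Rightarrow$(i) is shorter because it argues by contraposition --- a single very ample system $|H|$ whose general member fails to make $L|_D$ ample already sweeps out an at least two-dimensional family of $L$-trivial curves as $D$ varies --- whereas your direct argument must additionally dispose of isolated $L$-trivial curves and of surfaces swept by one-parameter families, which you handle correctly via countability and the irreducibility of $D\cap S_j$.
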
 

\begin{proof} 
Clearly (i) implies (ii). Conversely, assume that for every very ample divisor $H$ on $X$, $L|_D$ is not ample for general $D \in | H |$. Since $L^2 \cdot D \ne 0$, by the Nakai-Moishezon criterion there exists a curve $C \subseteq D$ such that $L \cdot C = 0$. Varying $D$, we obtain an at least $2$-dimensional family of (generically) irreducible curves $(C_t)_{t \in T}$  such that $L \cdot C_t = 0$ for all $t$. If the family $(C_t)$ covers $X$, then $L$ is semiample by Theorem \ref{thm:applnefred}(ii), a contradiction. If the family $(C_t)$ covers a surface $S$, then $L \cdot S = 0$ by Theorem \ref{thm:applnefred}(i).
\end{proof} 

\subsection{Positivity of locally free sheaves}

Recall that a locally free sheaf $\mathcal E$ on a variety $X$ is nef, respectively big, if the line bundle $\sO_{\bP(\mathcal E)}(1) $ is nef, respectively big. We first gather several properties of nef sheaves that we use in the paper.

\begin{lem}\label{lem:nefVB}
Let $X$ be a projective manifold.
\begin{enumerate}
\item[(a)] If $\mathcal E$ is a nef locally free sheaf on $X$, then every locally free quotient sheaf of $\mathcal E$ is nef.
\item[(b)] Let $0\to \mathcal E\to \mathcal F\to \mathcal G\to0$ be an exact sequence of locally free sheaves. If $\mathcal E$ and $\mathcal G$ are nef, then $\mathcal F$ is nef.
\item[(c)] Assume that $X$ is a curve. A locally free sheaf $\mathcal E$ on $X$ is nef if and only if all locally free quotients have non-negative degree, and is ample if and only if all locally free quotients have positive degree. 
A semistable locally free sheaf $\mathcal E$ on $X$ is nef if and only if it has a non-negative degree, and is ample if and only if it has a positive degree.
\item[(d)] Assume that $X$ is a smooth curve, and let $\mathcal E\to \mathcal F\to \mathcal G \to0$ be an exact sequence, where $\mathcal E$, $\mathcal F$ are locally free sheaves, and $\mathcal E$ and $\mathcal G /{\rm torsion} $ are nef. Then $\mathcal F$ is nef.
\item[(e)] Assume that $X$ is a smooth curve and let $\mathcal E$ be a locally free sheaf on $X$ which is generically globally generated. Then $\mathcal E$ is nef.
\item[(f)] Assume that $X$ is a (not necessarily smooth) curve, let $\mathcal E$ be a coherent sheaf on $X$ which is generically globally generated, and assume there is a generically surjective morphism $\mathcal{E}\to\mathcal Q$ to a locally free sheaf $\mathcal Q$ on $X$. Then $\mathcal Q$ is nef.
\end{enumerate}
\end{lem}

\begin{proof}
For (a), (b) and (c), see for instance \cite[Proposition 6.1.2, Theorem 6.2.12, and the proof of Theorem 6.4.15]{Laz04}. 

For (d), by replacing $\mathcal E$ by the image of $\mathcal E$ in $\mathcal F$, by (a) we may assume that the exact sequence is also exact on the left. Let $\mathcal E'$ be the saturation of $\mathcal E$ in $\mathcal F$, so that we have the exact sequence 
$$0\to \mathcal E'\to \mathcal F\to \mathcal G /{\rm torsion} \to0.$$
Since $\mathcal G/{\rm torsion} $ is locally free,  it suffices by (b) to show that $\mathcal E'$ is nef. Assuming otherwise, by (c) there exists a locally free quotient $\mathcal E'\to \mathcal Q' \to 0 $ with $\deg \mathcal Q'<0$, and it induces a quotient $\mathcal E\to \mathcal Q \to 0 $ with $\mathcal Q\subseteq \mathcal Q'$. In particular, $\deg \mathcal Q\leq\deg \mathcal Q'$, a contradiction since $\mathcal E$ is nef.

To prove (e) consider the subsheaf $\mathcal E' \subseteq \mathcal E$ generated by the global sections of $\mathcal E$. The quotient $\mathcal E/\mathcal E'$ is torsion, and we conclude by (d). 

Finally, for (f) let $\nu\colon\widetilde X\to X$ be the normalisation. We first note that the generically surjective morphism $\nu^*\mathcal E\to\nu^*\mathcal Q$ factors through the locally free sheaf $\mathcal E':=\nu^*\mathcal E/{\rm torsion}$, and consider the induced generically surjective morphism $\varphi\colon\mathcal E'\to\nu^*\mathcal Q$. Since $\mathcal E'$ is also generically globally generated, it is nef by (e), hence $\varphi(\mathcal E')$ is a nef locally free sheaf by (a). Since $\nu^*\mathcal Q/\varphi(\mathcal E')$ is torsion, it follows that $\nu^*\mathcal Q$ is nef by (d). Therefore $\mathcal Q$ is nef.
\end{proof}

The following result, see \cite[Theorem 0.1]{CP11} and \cite[Theorem 1.2]{CP15}, will be crucial in the proof of our main theorem. 

\begin{thm}\label{thm:CP11}
Let $X$ be a projective manifold, and let $(\Omega_X^1)^{\otimes m}\to\mathcal Q$ be a torsion free coherent quotient for some $m\geq1$. If $K_X$ is pseudoeffective, then $c_1(\mathcal Q)$ is pseudoeffective.
\end{thm}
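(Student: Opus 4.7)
The plan is to argue by contradiction via the duality theorem of Boucksom-Demailly-P\u{a}un-Peternell (BDPP), which identifies the dual of $\Effb(X)$ as the closure of the cone of strongly movable curve classes, and at the same time characterizes pseudoeffectivity of $K_X$ by the non-uniruledness of $X$. Suppose $c_1(\mathcal Q)$ is not pseudoeffective. Then BDPP produces a strongly movable class $\alpha$ with $c_1(\mathcal Q)\cdot\alpha<0$, so that $\mathcal Q$ has strictly negative $\alpha$-slope; in particular, the minimal Harder-Narasimhan slope $\mu^{\min}_\alpha\big((\Omega^1_X)^{\otimes m}\big)$ is negative.

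The next step is to transfer this instability from the $m$-th tensor power to $\Omega^1_X$ itself. I would invoke the preservation of semistability under tensor products with respect to a movable polarization, which in characteristic zero extends the classical Mehta-Ramanathan, Miyaoka and Langer results from ample classes to the movable cone. This yields
$$\mu^{\min}_\alpha\big((\Omega^1_X)^{\otimes m}\big)=m\cdot\mu^{\min}_\alpha(\Omega^1_X),$$
so $\mu^{\min}_\alpha(\Omega^1_X)<0$. Dualizing a destabilizing quotient of $\Omega^1_X$ produces a saturated subsheaf $\mathcal F\subseteq T_X$ with $c_1(\mathcal F)\cdot\alpha>0$; after replacing $\mathcal F$ by the maximal $\alpha$-destabilizing subsheaf of $T_X$, we may assume $\mathcal F$ is $\alpha$-semistable of positive slope.

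The central geometric step is to prove that $\mathcal F$ is closed under the Lie bracket, i.e.\ that it defines a holomorphic foliation. This should follow from a Bogomolov-type slope inequality: the $\mathcal O_X$-linear obstruction $\bigwedge^2\mathcal F\to T_X/\mathcal F$ induced by the bracket is forced to vanish once $\mathcal F$ has sufficiently positive $\alpha$-slope relative to quotients of $T_X$, which in turn is guaranteed by the minimal-slope instability just derived. Once integrability of $\mathcal F$ is established, an algebraicity-of-leaves theorem of Bogomolov-McQuillan type, refined by Kebekus-Sola Conde-Toma and Campana-P\u{a}un to the movable setting, ensures that the leaves of $\mathcal F$ are algebraic and generically rationally connected. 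Hence $X$ is uniruled, contradicting $K_X\in\Effb(X)$ via BDPP.

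The main obstacle is the full movable-slope machinery on which both of the last two steps rely: one must prove the preservation of semistability under tensor powers, and the algebraicity of leaves of positive-slope foliations, for classes in the interior of the movable cone rather than on ample complete intersections. Neither follows from the classical Mehta-Ramanathan restriction theorem, and developing these technical foundations is precisely the heart of the Campana-P\u{a}un program.
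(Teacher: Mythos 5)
The paper does not prove this statement at all: it is quoted verbatim as an external result of Campana--P\u{a}un, with pointers to \cite[Theorem 0.1]{CP11} and \cite[Theorem 1.2]{CP15}, so there is no in-paper proof to compare against. Your outline faithfully reproduces the strategy of those cited works --- BDPP duality to produce a movable class $\alpha$ with $c_1(\mathcal Q)\cdot\alpha<0$, additivity of the minimal $\alpha$-slope under tensor powers to destabilize $\Omega^1_X$ itself, integrability of the maximal destabilizing subsheaf of $T_X$ via a slope argument on the bracket-induced map $\bigwedge^2\mathcal F\to T_X/\mathcal F$, and algebraicity plus rational connectedness of the leaves to contradict non-uniruledness --- and each step is stated accurately. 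As you acknowledge, however, the two pillars (preservation of semistability under tensor products for movable polarizations, and the Bogomolov--McQuillan-type algebraicity of leaves in the movable setting) are precisely the main theorems of the Campana--P\u{a}un papers rather than consequences of classical restriction theorems, so your text is a correct roadmap of the known proof rather than a self-contained argument; for the purposes of this paper, which uses the result as a black box, that is entirely adequate.
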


The following lemma is well known.

\begin{lem} \label{van}
Let $Y$ be a projective manifold,  let $\mathcal M$ be a nef line bundle on $Y$, and let $\mathcal E$ be a nef and big locally free sheaf on $Y$. Then 
$$ H^q(Y, \omega_Y\otimes \mathcal E \otimes \det \mathcal E \otimes \mathcal M) = 0 \quad\text{for }q\geq1.$$
\end{lem}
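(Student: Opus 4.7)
The plan is to reduce the vanishing on $Y$ to a Kawamata--Viehweg vanishing on the projectivisation $\pi\colon Z:=\bP(\mathcal E)\to Y$, using the convention already fixed in the paper (hyperplanes, so $\pi_*\mathcal O_Z(1)=\mathcal E$). Set $r=\rk \mathcal E$. From the relative Euler sequence one computes
$$\omega_{Z/Y}=\pi^*\det\mathcal E\otimes\mathcal O_Z(-r),\qquad\text{hence}\qquad \omega_Z=\pi^*(\omega_Y\otimes\det\mathcal E)\otimes\mathcal O_Z(-r).$$
Twisting by $\mathcal O_Z(r+1)\otimes\pi^*\mathcal M$ gives
$$\omega_Z\otimes\mathcal O_Z(r+1)\otimes\pi^*\mathcal M\;=\;\mathcal O_Z(1)\otimes\pi^*(\omega_Y\otimes\det\mathcal E\otimes\mathcal M).$$

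First I would push this down to $Y$. By the projection formula together with $R^i\pi_*\mathcal O_Z(1)=0$ for $i\geq 1$ (the fibres are projective spaces and $\mathcal O_Z(1)$ restricts to $\mathcal O_{\mathbb P^{r-1}}(1)$), one gets
$$\pi_*\bigl(\omega_Z\otimes\mathcal O_Z(r+1)\otimes\pi^*\mathcal M\bigr)=\omega_Y\otimes\mathcal E\otimes\det\mathcal E\otimes\mathcal M,$$
and the Leray spectral sequence degenerates to the identification
$$H^q\bigl(Y,\omega_Y\otimes\mathcal E\otimes\det\mathcal E\otimes\mathcal M\bigr)\;\simeq\;H^q\bigl(Z,\omega_Z\otimes\mathcal O_Z(r+1)\otimes\pi^*\mathcal M\bigr)$$
for every $q\geq 0$.

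Next I would verify that the line bundle $\mathcal O_Z(r+1)\otimes\pi^*\mathcal M$ on $Z$ is nef and big. Nefness is automatic: $\mathcal O_Z(1)$ is nef since $\mathcal E$ is nef, $\pi^*\mathcal M$ is nef since $\mathcal M$ is nef, and tensor products of nef line bundles are nef. For bigness, $\mathcal O_Z(1)$ is already big (by hypothesis on $\mathcal E$), and the sum of a big and a nef class is big. Applying Kawamata--Viehweg vanishing on the smooth projective variety $Z$ to the nef and big line bundle $\mathcal O_Z(r+1)\otimes\pi^*\mathcal M$ yields the vanishing of the right-hand side for $q\geq 1$, which by the identification above is exactly the desired statement.

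There is no real obstacle: the argument is a textbook Griffiths-type vanishing, and the only delicate point is to keep track of the Grothendieck convention for $\bP(\mathcal E)$ and the resulting sign in the twist of $\omega_{Z/Y}$, so that $r+1$ (and not some other integer) is the correct power of $\mathcal O_Z(1)$ to add.
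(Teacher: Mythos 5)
Your argument is exactly the paper's proof: pass to $\bP(\mathcal E)$ with the hyperplane convention, rewrite $\omega_Y\otimes\mathcal E\otimes\det\mathcal E\otimes\mathcal M$ as the pushforward of $\omega_{\bP(\mathcal E)}\otimes\mathcal O_{\bP(\mathcal E)}(r+1)\otimes\pi^*\mathcal M$, and apply Kawamata--Viehweg to this nef and big line bundle. The proposal is correct and matches the paper's proof, with the only difference being that you spell out the Leray/projection-formula step that the paper leaves implicit.
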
 

\begin{proof} 
Let $\pi\colon \bP(\mathcal E)\to Y$ be the projection morphism, and let $r$ be the rank of $\mathcal E$. Note that
$$\omega_{\bP(\mathcal E)}=\pi^*(\omega_Y\otimes\det \mathcal E)\otimes \sO_{\bP(\mathcal E)}({-}r)\quad\text{and}\quad\pi_*\sO_{\bP(\mathcal E)}(1)=\mathcal E,$$
hence for every $q\geq1$,
$$ H^q(Y,\omega_Y\otimes \mathcal E \otimes \det \mathcal E \otimes \mathcal M ) \simeq H^q\big(\bP(\mathcal E),\omega_{\bP(\mathcal E)}\otimes \sO_{\bP(\mathcal E)}(r+1) \otimes \pi^*\mathcal M\big)=0$$
by the Kawamata-Viehweg vanishing theorem. 
\end{proof} 

We often below use the following result \cite[Theorem 6.2]{Fuj83}, see also \cite[Theorem 1.4.40]{Laz04}.

\begin{thm}\label{thm:fujita}
Let $X$ be a projective variety of dimension $n$ and let $D$ be a nef divisor on $X$. Then for any coherent sheaf $\mathcal F$ on $X$ and for every $i$ we have
$$h^i\big(X,\mathcal F\otimes\OO_X(mD)\big)=O(m^{n-i}).$$
\end{thm}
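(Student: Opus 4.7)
The plan is to proceed by induction on $n = \dim X$. The base case $n = 0$ is immediate: $X$ is zero-dimensional, so $H^i$ vanishes for $i \geq 1$ and $h^0(X, \mathcal F \otimes \OO_X(mD))$ is constant in $m$, whence the bound $O(m^{n-i}) = O(1)$ is trivial.

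For the inductive step, I would fix a very ample divisor $H$ on $X$ and a general member $Y \in |H|$ chosen to avoid the associated primes of $\mathcal F$, so that the hyperplane section sequence
\[
0 \to \mathcal F \otimes \OO_X(-H) \to \mathcal F \to \mathcal F|_Y \to 0
\]
is exact. Twisting by $\OO_X(mD + jH)$ for each integer $j \geq 1$ and passing to the long exact sequence of cohomology gives
\[
h^i\bigl(X, \mathcal F \otimes \OO_X(mD + (j-1)H)\bigr) \leq h^i\bigl(X, \mathcal F \otimes \OO_X(mD + jH)\bigr) + h^{i-1}\bigl(Y, \mathcal F|_Y \otimes \OO_Y(mD|_Y + jH|_Y)\bigr),
\]
and the inductive hypothesis on the $(n-1)$-dimensional variety $Y$ bounds the last term by $O(m^{(n-1)-(i-1)}) = O(m^{n-i})$.

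Telescoping this inequality over $j = 1,\ldots,k_0$ for any fixed integer $k_0$ yields
\[
h^i\bigl(X, \mathcal F \otimes \OO_X(mD)\bigr) \leq h^i\bigl(X, \mathcal F \otimes \OO_X(mD + k_0 H)\bigr) + O(m^{n-i}),
\]
with the implicit constant depending on $k_0$ but independent of $m$. For $i \geq 1$ one then chooses $k_0$ so large that $h^i(X, \mathcal F \otimes \OO_X(mD + k_0 H)) = 0$ for every $m \geq 0$; such a $k_0$ is supplied by Fujita's uniform vanishing theorem. Combined with Snapper's theorem that $\chi(X, \mathcal F \otimes \OO_X(mD))$ is polynomial in $m$ of total degree at most $n$, the case $i = 0$ then follows from the preceding bounds for $i \geq 1$.

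The main obstacle is precisely this uniform vanishing of $H^i(X, \mathcal F \otimes \OO_X(mD + k_0 H))$ in $m$: it requires controlling the cohomology as the nef class $mD$ grows alongside the fixed ample shift $k_0 H$, and one cannot invoke ordinary Serre vanishing directly since that would force $k_0$ to depend on $m$. The standard route, carried out in \cite{Fuj83} and in \cite[\S 1.4]{Laz04}, treats both the $O$-estimate and this uniform vanishing by a simultaneous induction on $\dim X$ along the lines above; rather than reproving it, we simply invoke the result.
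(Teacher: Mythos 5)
This statement is quoted in the paper as a known result --- the authors give no proof, only the references \cite[6.2]{Fuj83} and \cite[Theorem 1.4.40]{Laz04} --- so there is no internal argument to compare against. Your sketch is precisely the standard proof from the latter reference: induction on dimension via a general hyperplane section avoiding the associated primes of $\mathcal F$, telescoping the resulting inequality, killing the top term for $i\geq 1$ by Fujita's vanishing theorem, and handling $i=0$ by Snapper's polynomiality of $\chi$. The argument is correct, and you rightly identify Fujita vanishing (uniform in the nef twist) as the genuinely nontrivial input that ordinary Serre vanishing cannot replace; the only small inaccuracy is your closing remark that the $O$-estimate and the uniform vanishing are established by a \emph{simultaneous} induction --- in \cite{Laz04} Fujita's vanishing theorem is proved first (Theorem 1.4.35) and the growth estimate is then deduced exactly as you describe. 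This does not affect the validity of your proof.
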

 
 \subsection{The Cone conjecture}

Let $V$ be a real vector space equipped with a rational structure, let $\mcal C$ be a cone in $V$, and let $\Gamma$ be a subgroup of $\GL(V)$ which preserves $\mcal C$. A rational polyhedral cone $\Pi\subseteq\mcal C$ is a \emph{fundamental domain} for the action of $\Gamma$ 
on $\mcal C$ if $\mcal C=\bigcup_{g\in\Gamma}g\Pi$ and $\inte\Pi\cap \inte g\Pi=\emptyset$ if $g\neq\id$.

The Cone conjecture (for the nef cone) deals with the action of the automorphism group of $X$ on $\Nef(X)$, where $X$ is a $\Q$-factorial projective klt variety with $K_X\equiv0$. According to the original version by Morrison \cite{Mor93}, inspired by Mirror Symmetry, there exists a fundamental domain of the action of $\Aut(X)$ on $\Nef(X)^+$, which is the convex hull of the cone spanned by all \emph{rational} divisors in $\Nef(X)$. 

The version of the conjecture by Kawamata \cite{Kaw97} postulates that there exists a fundamental domain of the action of $\Aut(X)$ on $\Nef(X)^e$, which is the cone spanned by all \emph{effective} divisors in $\Nef(X)$. This version is more natural from the point of view of birational geometry, especially since it implies, in its most general form, that the number of minimal models of terminal varieties is finite \cite[Theorem 2.14]{CL14}.

On the other hand, we have the following consequence of \cite[Theorem 4.1, Application 4.14]{Loo14}, which is a result which belongs completely to the realm of convex geometry.

\begin{lem}
Let $X$ be a $\Q$-factorial projective klt variety with $K_X\equiv0$. Assume that there exists a polyhedral cone $\Pi\subseteq\Nef(X)^+$ such that $\Aut(X)\cdot\Pi$ contains the ample cone of $X$. Then $\Aut(X)\cdot\Pi=\Nef(X)^+$, and there exists a rational polyhedral fundamental domain for the action of $\Aut(X)$ on $\Nef(X)^+$.
\end{lem}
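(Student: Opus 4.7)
The strategy is to apply Looijenga's results \cite[Theorem 4.1, Application 4.14]{Loo14} in the following concrete setting: let $V=N^1(X)_\R$ equipped with the natural rational structure coming from $N^1(X)_\Q$, let $\mcal C=\Nef(X)^+\subseteq V$, and let $\Gamma$ be the image of $\Aut(X)$ in $\GL(V)$ via pullback of numerical classes. This action visibly preserves both the rational structure on $V$ and the cone $\Nef(X)$, hence also the subcone $\Nef(X)^+$ generated by rational nef classes.

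Two preliminary facts have to be checked before invoking Looijenga. First, the nef cone is strict (contains no line): if both $D$ and $-D$ are nef, then $D\cdot C=0$ for every irreducible curve $C$, so $D\equiv0$, and hence $D=0$ in $N^1(X)_\R$. Second, the ample cone $\Amp(X)$ is precisely the interior of $\Nef(X)$, and every ample class is rational, so $\Amp(X)\subseteq\Nef(X)^+$; in particular, the interior of $\mcal C$ as a convex cone in $V$ is non-empty and coincides with $\Amp(X)$.

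With these in place, the plan is to apply Looijenga's criterion, which in this language reads as follows: given a polyhedral cone $\Pi\subseteq\mcal C$ whose $\Gamma$-orbit covers the interior $\Amp(X)$ of the ambient nef cone, one concludes that $\Gamma\cdot\Pi$ covers all of $\mcal C$, and moreover that a rational polyhedral fundamental domain for the action of $\Gamma$ on $\mcal C$ can be extracted from $\Pi$. Our hypothesis is precisely that $\Aut(X)\cdot\Pi\supseteq\Amp(X)$, so both conclusions of the lemma drop out directly: first $\Aut(X)\cdot\Pi=\Nef(X)^+$, and then the existence of a rational polyhedral fundamental domain.

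The one genuinely delicate point—and the place where one has to read \cite{Loo14} carefully rather than mechanically—is verifying that Looijenga's hypotheses translate faithfully into our geometric setting: strictness of $\mcal C$, the integrality of the $\Gamma$-action, the correct meaning of the ``$+$'' decoration as the convex hull of rational rays inside the closure, and the fact that ``$\Gamma\cdot\Pi$ contains the ample cone'' matches his hypothesis of covering the open part of the cone. Once each of these dictionary entries is confirmed, the lemma is a formal consequence.
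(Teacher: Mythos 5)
Your proposal matches the paper exactly: the paper gives no argument beyond citing \cite[Theorem 4.1, Application 4.14]{Loo14}, which is precisely the reduction you carry out, with the same dictionary ($V=N^1(X)_\R$, $\mcal C=\Nef(X)^+$, $\Gamma$ the image of $\Aut(X)$). Your additional sanity checks (strictness of the nef cone, $\Amp(X)=\inte\Nef(X)\subseteq\Nef(X)^+$) are correct and only make the citation more transparent.
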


In particular, if we assume that Kawamata's version of the Cone conjecture holds, then necessarily Conjecture \ref{con:2} holds. In Theorem \ref{thm:nef+} we show that at least one part of Conjecture \ref{con:2} is true, that 
$$\Nef(X)^e\subseteq\Nef(X)^+.$$

We need the following result of Shokurov and Birkar, \cite[Proposition 3.2]{Bir11}; note that it is a careful application of the boundedness of extremal rays, which is a consequence of Mori's bend-and-break.

\begin{thm}\label{thm:ShokurovPolytope}
Let $X$ be a $\Q$-factorial projective variety, let $D_1,\dots,D_r$ be prime divisors on $X$ and denote $V=\bigoplus_{i=1}^r\R D_i\subseteq\Div_\R(X)$. Then the set
$$\mathcal N(V)=\{\Delta\in V\mid (X,\Delta)\text{ is log canonical and }K_X+\Delta\text{ is nef\,}\}$$
is a rational polytope.
\end{thm}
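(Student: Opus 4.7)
The plan is to cut out $\mathcal N(V)$ from $V$ by finitely many rational linear inequalities: a first batch expressing log canonicity, which is elementary, and a second expressing nefness, where the essential input is the uniform boundedness of the length of extremal rays.

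First I would verify that $\mathcal L(V):=\{\Delta\in V\mid (X,\Delta)\text{ is lc}\}$ is a compact rational polytope. Taking a log resolution $f\colon Y\to X$ of the pair $(X,\sum_{i=1}^r D_i)$ and writing $f^*(K_X+\Delta)=K_Y+\Delta_Y$, every coefficient of $\Delta_Y$ is an affine-linear function of the coefficients of $\Delta$ with rational coefficients. The lc condition is then the intersection of $\Delta\geq 0$ with the requirement that every coefficient of $\Delta_Y$ be at most $1$; this is a finite system of closed rational half-space conditions, and compactness follows since the constraints force each coefficient $\delta_i$ of $\Delta$ to lie in $[0,1]$.

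The decisive input for nefness is the uniform length bound for extremal rays in the lc setting, a consequence of Mori's bend-and-break: for every lc pair $(X,\Delta)$ with $\Delta\in\mathcal L(V)$, every $(K_X+\Delta)$-negative extremal ray $R$ of $\NEb(X)$ is spanned by a rational curve $C_R$ satisfying $0<-(K_X+\Delta)\cdot C_R\leq 2\dim X$, with the bound \emph{independent} of $\Delta\in\mathcal L(V)$.

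I would then show that $\mathcal N(V)$ is locally rational polyhedral: for any $\Delta_0\in\mathcal N(V)$ there exist an open neighborhood $U\subseteq\mathcal L(V)$ of $\Delta_0$ and finitely many numerical classes $[C_1],\dots,[C_k]\in N_1(X)$ such that $U\cap\mathcal N(V)$ is defined by the inequalities $(K_X+\Delta)\cdot C_i\geq 0$. Indeed, for $\Delta\in U$ any curve $C$ with $(K_X+\Delta)\cdot C<0$ spans a $(K_X+\Delta)$-negative extremal ray, and by the length bound combined with the compactness of $\mathcal L(V)$ and a comparison with a fixed ample divisor $H$ on $X$, the $H$-degrees of such curves are uniformly bounded as $\Delta$ varies in $U$. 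Since curves of bounded $H$-degree form a bounded family, only finitely many numerical classes occur, yielding the finite list $[C_1],\dots,[C_k]$.

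Finally, the compactness of $\mathcal L(V)$ allows $\mathcal N(V)$ to be covered by finitely many such neighborhoods, and this globalizes to a finite system of rational linear inequalities, establishing the rational polytope conclusion. The main obstacle is the local finiteness argument in the fourth paragraph: the length bound by itself only controls $-(K_X+\Delta)\cdot C$, and upgrading this to a genuine bound on $H\cdot C$ for all candidate curves requires a careful uniform compactness argument exploiting the polyhedral structure of $\mathcal L(V)$ together with the rational polyhedral description of the $(K_X+\Delta)$-negative part of $\NEb(X)$.
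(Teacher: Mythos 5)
The paper does not actually prove this statement: it quotes it from Shokurov--Birkar, citing \cite[Proposition 3.2]{Bir11}, with only the remark that the proof is ``a careful application of the boundedness of extremal rays, which is a consequence of Mori's bend-and-break''. Your architecture --- show $\mathcal L(V)$ is a compact rational polytope via a log resolution, invoke the length bound $-(K_X+\Delta)\cdot C_R\le 2\dim X$ for $(K_X+\Delta)$-negative extremal rays, deduce that $\mathcal N(V)$ is locally cut out of $\mathcal L(V)$ by finitely many rational linear inequalities, and globalize by compactness --- is exactly the architecture of the cited proof, so at that level you are reproducing the intended argument.

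However, the fourth paragraph contains a genuine gap, which you yourself flag as ``the main obstacle'' without closing it. The length bound controls only $-(K_X+\Delta)\cdot C_R$; it gives no control on $H\cdot C_R$, and the claim that ``the $H$-degrees of such curves are uniformly bounded'' does not follow from compactness of $\mathcal L(V)$ plus ``comparison with $H$''. Indeed a single lc pair may have infinitely many $(K_X+\Delta)$-negative extremal rays of unbounded $H$-degree accumulating on the hyperplane $(K_X+\Delta)^{\perp}$, so the ``bounded family, hence finitely many numerical classes'' step is exactly the point at issue and cannot be obtained this way. The actual proof of Shokurov and Birkar never bounds degrees: one approximates $\Delta_0\in\mathcal N(V)$ by finitely many \emph{rational} points $\Delta_j$ of the rational polytope $\mathcal L(V)$ with $\Delta_0$ in the interior of their convex hull; choosing $m$ so that each $m(K_X+\Delta_j)$ is Cartier, any intersection number $(K_X+\Delta_j)\cdot C$ lies in $\frac1m\Z$, so positive values are bounded below by $\frac1m$ while the length bound caps the negative values at $-2\dim X$. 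Playing these two bounds against the convex decomposition $K_X+\Delta_0=\sum r_j(K_X+\Delta_j)$ along segments, and running an induction on $\dim\mathcal N(V)$ over the faces, produces the finitely many supporting rays and the rationality of the resulting walls. That rational-approximation/bounded-denominator mechanism is the missing idea; without it your local finiteness claim is an assertion rather than a proof.
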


\begin{thm}\label{thm:nef+}
Let $X$ be a $\Q$-factorial projective klt variety with $K_X\equiv0$. Then 
$$\Nef(X)^e\subseteq\Nef(X)^+.$$
\end{thm}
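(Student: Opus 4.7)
The plan is to apply the Shokurov--Birkar polytope result Theorem \ref{thm:ShokurovPolytope} directly. Since $\Nef(X)^+$ is a convex cone, it suffices to show that the class of an arbitrary effective nef $\R$-divisor $D$ on $X$ lies in $\Nef(X)^+$.

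First I would write $D=\sum_{i=1}^{r}a_iD_i$ with $D_i$ prime and $a_i\geq 0$, and form the finite-dimensional subspace $V=\bigoplus_{i=1}^{r}\R D_i\subseteq\Div_\R(X)$. Because $(X,0)$ is klt and the discrepancies along a fixed log resolution depend affinely on the boundary, there exists $\epsilon>0$ for which $(X,\epsilon D)$ is still klt, and in particular log canonical. Since $K_X\equiv 0$, the nefness of $D$ is equivalent to the nefness of $K_X+\epsilon D$, so $\epsilon D$ lies in the set $\mathcal N(V)$ of Theorem \ref{thm:ShokurovPolytope}.

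By Theorem \ref{thm:ShokurovPolytope}, $\mathcal N(V)$ is a rational polytope, so one can write $\epsilon D=\sum_j t_j v_j$ as a convex combination of its (rational) vertices. Each $v_j$ is a rational effective divisor with $K_X+v_j$ nef, and using $K_X\equiv 0$ again, $v_j$ itself is a rational nef class; hence $[v_j]\in\Nef(X)\cap N^1(X)_\Q\subseteq\Nef(X)^+$. Since $\Nef(X)^+$ is a convex cone, $[\epsilon D]=\sum_j t_j [v_j]\in\Nef(X)^+$, and scaling by $\epsilon^{-1}$ yields $[D]\in\Nef(X)^+$.

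Once Theorem \ref{thm:ShokurovPolytope} is granted, the argument is essentially formal and I do not expect a serious obstacle: the nontrivial geometric input---the boundedness of extremal rays via Mori's bend-and-break---has been absorbed into the polytope theorem, and the perturbation $(X,\epsilon D)$ being klt follows from openness of the klt condition in the space of boundaries. The key observation is that, because $K_X\equiv 0$, every vertex of $\mathcal N(V)$ is automatically a rational nef class, which is exactly what the inclusion $\Nef(X)^e\subseteq\Nef(X)^+$ demands.
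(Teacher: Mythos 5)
Your proof is correct and follows essentially the same route as the paper: perturb to $\epsilon D$ so that $(X,\epsilon D)$ is klt, place $\epsilon D$ in the Shokurov--Birkar polytope $\mathcal N(V)$, and express it as a convex combination of rational vertices, which are nef classes since $K_X\equiv 0$. The paper additionally intersects $\mathcal N(V)$ with the simplicial cone $\sum\R_+D_i$ before taking the rational-polytope structure, but this is an inessential variation of the same argument.
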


\begin{proof}
Let $D$ be an $\R$-divisor whose class is in $\Nef(X)^e$, and let $V\subseteq\Div_\R(X)$ be the vector space spanned by all the components $D_1,\dots,D_r$ of $D$. Replacing $D$ by $\varepsilon D$ for $0<\varepsilon\ll1$, we may assume that $(X,D)$ is a klt pair, and in particular, with notation from Theorem \ref{thm:ShokurovPolytope}, $D\in\mathcal N(V)$. On the other hand, clearly $D\in\sum_{i=1}^r\R_+ D_i\subseteq V$. By Theorem \ref{thm:ShokurovPolytope}, the set 
$$\mathcal N(V)\cap\sum_{i=1}^r\R_+ D_i$$
is a rational polytope, hence $D$ is spanned by nef $\Q$-divisors.
\end{proof}

\section{Differentials with coefficients in a line bundle}

In this section, we prove several properties of nef line bundles on a  Calabi-Yau threefold. For future applications in Part II to this paper, we mostly do not restrict ourselves
to varieties with Picard number $2$ or line bundles with numerical dimension $2$. 

\begin{lem}\label{lem:hodge}
Let $X$ be a smooth projective surface and let $L$ and $M$ be divisors on $X$ such that $L^2=M^2=L\cdot M=0$. If $L$ and $M$ are not numerically trivial, then $L$ and $M$ are numerically proportional.
\end{lem}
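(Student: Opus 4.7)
The plan is to deduce this directly from the Hodge index theorem on the surface $X$. Recall that the Hodge index theorem asserts that the intersection form on $N^1(X)_\R$ has signature $(1,\rho(X)-1)$; equivalently, if $H$ is any ample divisor and $N$ is a divisor with $N\cdot H=0$, then $N^2\le 0$, with equality if and only if $N\equiv 0$.

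Fix an ample divisor $H$ on $X$. First I would observe that $L\cdot H>0$. Indeed, $L\cdot H\ge 0$ since $L$ is nef, and if $L\cdot H$ were zero, then Hodge index applied to $L$ would give $L^2<0$ (since $L\not\equiv 0$), contradicting the assumption $L^2=0$. In particular, $\lambda:=(M\cdot H)/(L\cdot H)$ is a well-defined real number.

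Next, I would set $N:=M-\lambda L$ and compute using the three vanishings $L^2=M^2=L\cdot M=0$:
\[
N^2=M^2-2\lambda\, L\cdot M+\lambda^2 L^2=0,\qquad N\cdot H=M\cdot H-\lambda\, L\cdot H=0.
\]
Applying the equality case of the Hodge index theorem to $N$ (which is orthogonal to the ample class $H$ and satisfies $N^2=0$), we conclude that $N\equiv 0$, that is, $M\equiv \lambda L$. This exhibits the required numerical proportionality, and since $M\not\equiv 0$, necessarily $\lambda\neq 0$.

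There is no real obstacle: the whole argument is a two-line application of Hodge index once one produces the correction $M-\lambda L$ orthogonal to $H$. The only mild point to be careful about is ruling out $L\cdot H=0$ at the beginning, which is itself just another instance of Hodge index.
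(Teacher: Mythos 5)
Your proof is correct and follows essentially the same route as the paper: both form a linear combination of $L$ and $M$ orthogonal to an ample class $H$ with self-intersection zero and invoke the equality case of the Hodge index theorem (the paper takes $D=(L\cdot H)M-(M\cdot H)L$, which is just a scalar multiple of your $N$). Your preliminary check that $L\cdot H>0$ is a sound way to justify the division, matching the paper's observation that $L\cdot H\neq 0$.
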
 

\begin{proof}
Let $H$ be an ample divisor on $X$. By the Hodge index theorem we have $\lambda=L\cdot H\neq0$ and $\mu=M\cdot H\neq 0$, and set $D=\lambda M-\mu L$. Then $D^2=D\cdot H=0$, hence $D\equiv 0$ again by the Hodge index theorem.
\end{proof}

\begin{lem}\label{lem:l-d}
Let $X$ be a smooth projective threefold with $H^1(X,\OO_X)=0$ and let $L$ be a nef divisor on $X$ with $\nu(X,L)=1$. Assume that $\kappa(X,L)=-\infty$ and let $D$ be a non-zero effective divisor on $X$. Then the divisor $L-D$ is not pseudoeffective.
\end{lem}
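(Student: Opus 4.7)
Suppose for contradiction that $L-D$ is pseudoeffective. The plan is to derive the numerical constraint $L\cdot D\equiv 0$ in $N_1(X)_\R$, and then use the nef reduction of $L$ to produce a movable curve class $\gamma$ satisfying $L\cdot\gamma=0$ and $D\cdot\gamma>0$, which will contradict the pseudoeffectivity of $L-D$ via the duality between the movable and pseudoeffective cones.

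For the first step, pick any ample divisor $H$. The class $L\cdot H$ lies in the closure of the movable cone, being a limit of complete intersections coming from ample approximations of $L$. Pseudoeffectivity of $L-D$ then gives $(L-D)\cdot L\cdot H\geq 0$; using $L^2\equiv 0$ (from $\nu(X,L)=1$) this reads $L\cdot D\cdot H\leq 0$, and together with $L\cdot D\cdot H\geq 0$ (as $L$ is nef and $D\cdot H$ is effective) we conclude $L\cdot D\cdot H=0$ for every ample $H$. By the non-degeneracy of the intersection pairing on $X$, $L\cdot D\equiv 0$ in $N_1(X)_\R$.

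For the second step, invoke the nef reduction $f\colon X\dashrightarrow Y$ of $L$ from Theorem~\ref{thm:nefreduction} and argue $\dim Y=2$. The case $\dim Y=0$ is excluded by $\nu(X,L)=1\neq 0$. If $\dim Y=1$, resolving the indeterminacies of $f$ via $\sigma\colon X'\to X$ to a morphism $f'\colon X'\to Y$ and invoking the result of Lehmann yields $\sigma^*L\sim_\Q(f')^*A$ for some nef $\Q$-divisor $A$ on the curve $Y$; since $\nu(X,L)=1$ forces $\deg A>0$, one gets $\kappa(X,L)=\kappa(Y,A)\geq 1$, contradicting $\kappa(X,L)=-\infty$. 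Once $\dim Y=2$ is secured, Theorem~\ref{thm:nefreduction}(ii) supplies a $2$-dimensional family $(C_t)_{t\in T}$ of generically irreducible curves covering $X$ with $L\cdot C_t=0$ for all $t$. Since $D$ is nonzero effective and the $C_t$ cover $X$, a general $C_{t_0}$ is not contained in $D$ and meets $D$ in a nonempty finite set, so $D\cdot C_{t_0}>0$. The class $\gamma=[C_{t_0}]$ is movable, and $(L-D)\cdot\gamma=-D\cdot C_{t_0}<0$ yields the required contradiction.

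The main obstacle is ruling out $\dim Y=3$, where no covering family of $L$-trivial curves exists and the movable-curve argument fails directly. The backup plan is a Hodge-theoretic analysis on a smooth very ample surface $H\subseteq X$: there $L|_H$ is nef with $(L|_H)^2=L^2\cdot H=0$ and $L|_H\not\equiv 0$ (since $L\cdot H^2>0$ for nef $L\not\equiv 0$ on the threefold), while $(L-D)|_H$ is pseudoeffective with $L|_H\cdot(L-D)|_H=0$. Lemma~\ref{lem:hodge} then forces $((L-D)|_H)^2=D^2\cdot H\leq 0$. Equality gives $(L-D)|_H\equiv c\,L|_H$, which via the weak Lefschetz injection $N^1(X)\hookrightarrow N^1(H)$ and $H^1(X,\OO_X)=0$ promotes to $D\equiv(1-c)L$ on $X$; effectivity of $D$ forces $1-c>0$, making some multiple of $L$ linearly equivalent to a multiple of $D$ and thus $\kappa(X,L)\geq 0$, contradicting our hypothesis. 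Strict inequality forces a nonzero negative part in the Zariski decomposition of $(L-D)|_H$, producing curves $C\subseteq H$ with $L\cdot C=0$ and $D\cdot C>0$; varying $H$ in the very ample system, these curves should sweep out $X$ and supply the desired movable class.
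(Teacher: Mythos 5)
Your proof has two genuine gaps, and the second one is fatal to the backup plan that the whole argument ultimately rests on. First, as you acknowledge, the nef reduction route cannot exclude $\dim Y=3$: the nef dimension of $L$ can be strictly larger than $\nu(X,L)=1$, and in that case there is no covering family of $L$-trivial curves at all, so everything reduces to the ``backup plan.'' Second, in the backup plan the strict-inequality branch does not close. From $\big((L-D)|_H\big)^2<0$ you correctly extract components $C$ of the negative part of the Zariski decomposition of $(L-D)|_H$ on $H$ with $(L-D)\cdot C<0$ (hence $L\cdot C=0$ and $D\cdot C>0$), and varying $H$ these curves do form an at least $2$-dimensional family. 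But a $2$-dimensional family of curves need not sweep out $X$ --- it can very well be contained in a single fixed surface $S$ (e.g.\ the curves $H\cap S$ for a surface $S$ on which $L-D$ is very negative). If the family only covers $S$, the class $[C]$ is movable in $S$ but not in $X$, and $(L-D)\cdot C<0$ gives no contradiction with the pseudoeffectivity of $L-D$ on $X$; restrictions of pseudoeffective classes to surfaces in the non-nef locus can fail to be pseudoeffective. The phrase ``these curves should sweep out $X$'' is exactly the unproved step. (A minor additional point: the inequality $\big((L-D)|_H\big)^2\leq 0$ follows from the Hodge index theorem, not from Lemma~\ref{lem:hodge}, which only treats the case where both squares already vanish.)

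The paper avoids this entirely by taking the divisorial Zariski ($\sigma$-)decomposition of $G=L-D$ globally on $X$, writing $G=P_\sigma(G)+N_\sigma(G)$, and using Nakayama's result that $P_\sigma(G)|_H$ is \emph{nef} for a general very ample $H$. Then $(P_\sigma(G)|_H)^2\geq 0$ automatically, while the orthogonality $L\cdot P_\sigma(G)\cdot H=L\cdot N_\sigma(G)\cdot H=L\cdot D\cdot H=0$ and the Hodge index theorem force $(P_\sigma(G)|_H)^2=0$; so only your ``equality case'' ever occurs, with the would-be negative part absorbed into the global effective divisor $N_\sigma(G)$. Lemma~\ref{lem:hodge} and Lefschetz then give $P_\sigma(G)\equiv\lambda L$ with $0\leq\lambda<1$, hence $L\equiv\frac{1}{1-\lambda}\big(N_\sigma(G)+D\big)$ is numerically an effective $\R$-divisor, and a final rationality argument (choosing a rational point in the affine slice of the cone spanned by its components) upgrades this to $L\sim_\Q E'$ for an effective $\Q$-divisor, contradicting $\kappa(X,L)=-\infty$. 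Note that this last rationality step is also needed because $N_\sigma(G)$ may have irrational coefficients; in your equality case you get away without it only because a single integral divisor $D$ is involved.
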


\begin{proof}
Denote $G=L-D$, and assume that $G$ is pseudoeffective. Denote $P=P_\sigma(G)$ and $N=N_\sigma(G)$, see\ \cite[Chapter III]{Nak04}. By \cite[Remark III.2.8 and paragraph after Corollary V.1.5]{Nak04} there is a set $Z\subseteq X$ which is a countable union of subvarieties of codimension at least $2$, such that $P|_C$ is nef for any curve $C \not \subseteq Z$. Therefore, if $H$ be a general very ample divisor on $X$, then $Z \cap H$ is a countable set, hence 
$P|_H$ is nef. In particular,
\begin{equation}\label{eq:restrictionNef}
(P|_H)^2\geq0.
\end{equation}

On the other hand, we have
$$0=(L|_H)^2=L|_H\cdot P|_H+L|_H\cdot N|_H+L|_H\cdot D|_H,$$
hence 
$$L|_H\cdot P|_H=L|_H\cdot N|_H=L|_H\cdot D|_H=0.$$
Now the Hodge index theorem implies $(P|_H)^2\leq0$, and hence $(P|_H)^2=0$ by \eqref{eq:restrictionNef}. Then Lemma \ref{lem:hodge} yields $P|_H\equiv\lambda L|_H$ for some real number $\lambda\geq0$, and hence $P\equiv\lambda L$ by the Lefschetz hyperplane section theorem. Note that $\lambda<1$ since $D$ is non-zero. Therefore, setting $E=\frac{1}{1-\lambda}(N+D)$, we obtain
$$L\equiv E,$$
and the Weil $\R$-divisor $E$ is effective. Let $E_1,\dots,E_r$ be components of $E$ and let $\pi\colon \Div_\R(X)\to N^1(X)_\R$ be the standard projection. Then $\pi^{-1}\big(\pi(L)\big)\cap\sum\R_+E_i$ is a rational affine polygon of $\sum\R E_i$ which contains $E$, hence there exists a rational point 
$$\textstyle E'\in \pi^{-1}\big(\pi(L)\big)\cap\sum\R_+E_i.$$ 
Therefore $L\equiv E'$, and consequently $L\sim_\Q E'$, which is a contradiction with $\kappa(X,L)=-\infty$.
\end{proof}

\begin{rem}  \label{remtriv} 
The assertion of Lemma \ref{lem:l-d} is obviously also true when $\nu(X,L) = 2$, provided that $\rho(X) = 2$. 
\end{rem} 

The following result has been claimed in \cite[3.1]{Wi94} when $\nu(X,L) = 2$. 

\begin{pro} \label{pro:2.3new} 
Let $X$ be a Calabi-Yau threefold and let $L$ be a nef divisor on $X$ such that $\kappa(X,L) = - \infty$. Then there is a positive integer $m_0$ such that
$$ H^0\big(X,\Omega^q_X \otimes \sO_X(mL) \big)=0\quad\text{for all } \vert m \vert \geq m_0\text{ and all }q.$$
\end{pro}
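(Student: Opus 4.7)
The plan is to treat the four values of $q$ separately. The cases $q=0$ and $q=3$ are immediate: the hypothesis $\kappa(X,L)=-\infty$ gives $H^0(X,\sO_X(mL))=0$ for $m\geq1$, and the isomorphism $\Omega^3_X\simeq\omega_X\simeq\sO_X$ reduces $q=3$ to $q=0$. The content lies in $q=1$ and $q=2$, which are coupled by the Calabi-Yau duality $\Omega^2_X\simeq(\Omega^1_X)^\vee\otimes\omega_X\simeq T_X$.

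Assume for contradiction that $H^0(X,\Omega^1_X\otimes\sO_X(mL))\neq 0$ for $m$ arbitrarily large. A nonzero section yields an injection $\sO_X(-mL)\hookrightarrow\Omega^1_X$, whose saturation has the form $\sO_X(D-mL)\hookrightarrow\Omega^1_X$ for some effective divisor $D$ (depending on $m$ and the section), with torsion-free rank-$2$ quotient $\mathcal Q$. Since $K_X\equiv 0$ is pseudoeffective, Theorem \ref{thm:CP11} applied to $\mathcal Q$ gives that $c_1(\mathcal Q)=mL-D$ is pseudoeffective. The case $q=2$ is handled in parallel: a section of $T_X\otimes\sO_X(mL)$ dualizes to $\Omega^1_X\to\sO_X(mL)$, whose torsion-free image equals $\sO_X(mL-D')\otimes\mathcal I_Z$ for an effective $D'$ and a subscheme $Z$ of codimension $\geq 2$, and Theorem \ref{thm:CP11} again gives that $mL-D'$ is pseudoeffective.

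To derive a contradiction, I split on $\nu(X,L)\in\{1,2\}$: indeed $\nu\geq 1$ because $L\not\equiv 0$ (otherwise $L\sim 0$ by $H^1(X,\sO_X)=0$ and simple-connectedness, forcing $\kappa(X,L)=0$), while $\nu\leq 2$ because $L^3=0$ (otherwise $L$ would be big). For $\nu(X,L)=1$, I follow the Hodge-index strategy of Lemma \ref{lem:l-d}: restricting to a general very ample smooth surface $H$ where $(mL|_H)^2=0$, writing $mL-D=P_\sigma+N_\sigma$ with $P_\sigma|_H$ nef, and applying Hodge index on $H$ together with the Lefschetz hyperplane theorem forces $P_\sigma\equiv\lambda mL$ on $X$ for some $\lambda\in[0,1]$; the simple-connectedness then upgrades the numerical equivalence $(1-\lambda)mL\equiv D+N_\sigma$ to a $\Q$-linear equivalence, so that $D+N_\sigma\neq 0$ directly contradicts $\kappa(X,L)=-\infty$. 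For $\nu(X,L)=2$, intersection with the nef $1$-cycle $L^2$ gives $(mL-D)\cdot L^2=-D\cdot L^2\geq 0$, hence $D\cdot L^2=0$, so every component $S$ of $D$ lies in the null-locus $(L|_S)^2=0$; I would then exploit the nef reduction $\phi\colon X\dashrightarrow S$ with $\dim S=2$ (Theorem \ref{thm:applnefred}) to conclude $D=0$.

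The main obstacle, I expect, is disposing of the residual case $D=0$, where one has a genuinely saturated $\sO_X(-mL)\hookrightarrow\Omega^1_X$ (respectively a saturated quotient $\Omega^1_X\twoheadrightarrow\sO_X(mL)$) for infinitely many $m$. My intended attack is a wedge-product/rank argument: three sections $s_1,s_2,s_3\in H^0(\Omega^1_X(m_iL))$ for distinct large $m_i$ cannot span $\Omega^1_X$ generically, because $s_1\wedge s_2\wedge s_3\in H^0(\sO_X((m_1+m_2+m_3)L))=0$; alternating between $q=1$ and $q=2$ via $\bigwedge^2\Omega^1_X\simeq T_X$ (so that a rank-$2$ wedge $s_i\wedge s_j$ becomes a $q=2$ section of numerical dimension $m_i+m_j$) collapses the situation to all sections factoring through a fixed saturated line subsheaf $\mathcal F=\sO_X(F)\hookrightarrow\Omega^1_X$ with $F+m_iL$ effective for infinitely many $m_i$. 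Theorem \ref{thm:CP11} applied to the rank-$2$ quotient $\Omega^1_X/\mathcal F$ gives $-F$ pseudoeffective, and combining this with the effectivity of $F+m_iL$ for $m_i\to\infty$ produces $L$ as a limit in $N^1(X)_\R$ of classes of effective $\Q$-divisors that, by a Shokurov-polytope or rationality argument, can be realized on the nose, contradicting $\kappa(X,L)=-\infty$.
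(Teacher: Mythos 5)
Your core machinery is the same as the paper's: produce a saturated line subsheaf of some $\Omega^q_X$ (or $\bigwedge^r\Omega^q_X$) from the sections, apply Theorem \ref{thm:CP11} to the torsion-free quotient to get pseudoeffectivity of $mL-D$, and then kill the effective part $D$ by the Hodge-index argument of Lemma \ref{lem:l-d}. One organizational remark: the paper avoids your ``residual case $D=0$'' entirely by first taking the \emph{single} subsheaf $\mathcal F\subseteq\Omega^q_X$ generated by the images of \emph{all} the maps $\sO_X(-mL)\to\Omega^q_X$, and then taking $\det\mathcal F$ and its saturation $\sO_X(-F)$ in $\bigwedge^r\Omega^q_X$. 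This yields a \emph{fixed} pseudoeffective divisor $F$ with $m'L\sim F+N_{m'}$, $N_{m'}\geq0$, for infinitely many $m'$; Lemma \ref{lem:l-d} forces $N_{m'}=0$, and then $(m'_1-m'_2)L\sim0$ gives the contradiction at once. Your wedge-product reduction is essentially this determinant trick done by hand, and your closing ``limit of effective classes plus rationality'' step is an unnecessary detour: once $-F$ is pseudoeffective and $F+m_iL$ is effective, Lemma \ref{lem:l-d} applied to $m_iL$ already forces the effective part to vanish.

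The genuine gap is in your $\nu(X,L)=2$ branch. From $(mL-D)\cdot L^2\geq0$ you correctly deduce $L^2\cdot S=0$ for every component $S$ of $D$, but the nef reduction does not then give $D=0$: a surface with $(L|_S)^2=0$ need not carry a two-dimensional family of $L$-trivial curves, and even surfaces with $L\cdot S=0$ genuinely occur (Section \ref{sec:semicriterion2} of the paper is devoted precisely to this situation), so there is no contradiction to extract at this point. The step is only rescued under the extra hypothesis $\rho(X)=2$, where Lemma \ref{lem:surface} (Section 2) gives $L^2\cdot S>0$ for every surface; equivalently, Remark \ref{remtriv} extends Lemma \ref{lem:l-d} to $\nu=2$ only when $\rho(X)=2$. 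This is exactly the scope of the paper's own argument: the authors state explicitly that their proof covers $\nu(X,L)=1$, or $\nu(X,L)=2$ with $\rho(X)=2$, and they cite \cite[Theorem 8.1]{LP16} for the general case. So your proposal matches the paper's proof where that proof works, but your claim to dispose of the general $\nu=2$ case via nef reduction is not correct as written.
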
 

\begin{proof} 
The result is proved in full generality within a broader context in \cite[Theorem 8.1]{LP18}; here we give a proof which works when $\nu(X,L)=1$ or when $\nu(X,L)=2$ and $\rho(X)=2$. The method was already used in \cite[Theorem 5.1]{HPR13}. Notice also that for $m \ll 0,$ the claim is clear: since $L$ is nef but not numerically trivial, we have ${-}L \cdot C < 0$ for all general curves cut out by hyperplane sections. 
 
Assume to the contrary that there exists $q$ such that 
$$ H^0\big(X,\Omega^q_X \otimes \sO_X(mL)\big) \neq 0$$
for infinitely many $m$. Every nontrivial section of $H^0\big(X,\Omega^q_X \otimes \sO_X(mL)\big)$ gives an inclusion $\sO_X(-mL) \to \Omega_X^q$, and consider the smallest subsheaf $\mathcal F \subseteq \Omega_X^q$ containing the images of all these inclusions. Let $r$ be the generic rank of $\mathcal F$. Then, without loss of generality, we may find infinitely many $r$-tuples $(m_1,\dots,m_r)$ such that the image of the map 
$$\sO_X({-}m_1L) \oplus\cdots\oplus \sO_X({-}m_rL) \to\mathcal F$$ 
has rank $r$. Taking determinants, we obtain infinitely many inclusions $\sO_X(-m'L) \to \det\mathcal F $. Let $F$ be a Cartier divisor such that $\OO_X(-F)$ is the saturation of $\det\mathcal F$ in $\bigwedge^r\Omega^q_X$. Therefore 
\begin{equation}\label{eq:infmany}
H^0\big(X,\OO_X(-F) \otimes \sO_X(m'L)\big) \ne 0\quad\text{for infinitely many }m'. 
\end{equation}
Consider the exact sequence
$$ 0 \to \OO_X(-F) \to \bigwedge^r\Omega^q_X \to \mathcal Q \to 0.$$
Since $\OO_X(-F)$ is saturated,  it follows that $\mathcal Q$ is torsion free, and hence $c_1(\mathcal Q)$ is pseudoeffective by Theorem \ref{thm:CP11}. As $\omega_X \simeq \sO_X$,  we deduce from the exact sequence above that $F=c_1(\mathcal Q)$, hence the divisor $F$ is pseudoeffective.

From \eqref{eq:infmany}, for every such $m'$ we obtain an effective divisor $N_{m'}$ such that 
$$N_{m'}+F\sim m'L.$$
Now Lemma \ref{lem:l-d} and Remark \ref{remtriv}  yield $N_{m'}=0$ for all $m'$,  hence some multiple of $L$ is linearly equivalent to $0$, a contradiction with $\kappa(X,L)=-\infty$.
\end{proof} 

\begin{cor}  \label{cor:growth order}
Let $X$ be a Calabi-Yau threefold, and let $L$ be a nef Cartier divisor with $\nu(X,L) = 2$ which is not semiample. Then there exists a positive integer $k$ such that 
$$ h^j\big(X,\Omega^q_X \otimes \sO_X(kmL)\big) = O(m)\quad\text{for all }q\text{ and }j.$$
\end{cor}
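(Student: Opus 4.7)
The plan is to combine three ingredients: Fujita's estimate from Theorem \ref{thm:fujita}, the vanishing of $H^0$ supplied by Proposition \ref{pro:2.3new}, and the constancy in $m$ of the Euler characteristic $\chi(X,\Omega^q_X\otimes\sO_X(mL))$. The cases $j\in\{0,2,3\}$ are nearly immediate, and the only real content is improving the a priori Fujita bound $h^1=O(m^2)$ down to $O(m)$.

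First, since $\dim X=3$ and $L$ is nef, Theorem \ref{thm:fujita} applied to the coherent sheaves $\Omega^q_X$ gives $h^j(X,\Omega^q_X\otimes\sO_X(mL))=O(m^{3-j})$ for every $q$ and $j$. This is already the desired bound for $j=2$, and is stronger than needed for $j=3$. Next, by Proposition \ref{prop1}(i) we have $\kappa(X,L)=-\infty$, so Proposition \ref{pro:2.3new} produces an integer $m_0$ such that $H^0(X,\Omega^q_X\otimes\sO_X(mL))=0$ for every $m\geq m_0$ and every $q$. Setting $k=m_0$ we obtain $h^0(X,\Omega^q_X\otimes\sO_X(kmL))=0$ for every $m\geq 1$ and every $q$, which disposes of the case $j=0$.

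To treat $j=1$, we observe that $\chi(X,\Omega^q_X\otimes\sO_X(mL))$ is in fact independent of $m$. Indeed, for $q\in\{0,3\}$ we have $\Omega^q_X\simeq\sO_X$, and Proposition \ref{prop1}(iii) gives $\chi=0$; for $q=1$, Proposition \ref{prop1}(iv) gives $\chi=-\tfrac{1}{2}c_3(X)$; and for $q=2$, Serre duality together with $\omega_X\simeq\sO_X$ and the natural isomorphism $(\Omega^2_X)^\vee\simeq\Omega^1_X\otimes\omega_X^{-1}\simeq\Omega^1_X$ yields
$$\chi(X,\Omega^2_X\otimes\sO_X(mL))=-\chi(X,\Omega^1_X\otimes\sO_X(-mL))=\tfrac{1}{2}c_3(X),$$
again by Proposition \ref{prop1}(iv). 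Hence, for every $m\geq 1$ and every $q$, combining the identity $\chi=h^0-h^1+h^2-h^3$ with the bounds already established gives
$$h^1(X,\Omega^q_X\otimes\sO_X(kmL))=-\chi+h^0+h^2-h^3=O(m),$$
since $h^0=0$, $h^2=O(m)$, $h^3=O(1)$, and $\chi$ is constant. The proof has no substantive obstacle; the only delicate point is that Fujita's estimate alone is too weak in middle cohomology, and the gap is closed by exploiting the rigidity of the Euler characteristic on a Calabi--Yau threefold together with the vanishing of $H^0$ supplied by Proposition \ref{pro:2.3new}.
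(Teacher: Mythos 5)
Your proof is correct and follows essentially the same route as the paper: vanishing of $h^0$ from Proposition \ref{pro:2.3new}, the $O(m)$ bound on $h^2$ from Theorem \ref{thm:fujita}, constancy of $\chi$ from Proposition \ref{prop1} together with Serre duality, and then solving for $h^1$. The only cosmetic difference is that you bound $h^3$ by $O(1)$ via Fujita, while the paper gets its exact vanishing for $m\gg0$ from Proposition \ref{pro:2.3new} and Serre duality; both suffice.
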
 

\begin{proof} 
By Proposition \ref{prop1} and by Serre duality, for all $m$ we have 
\begin{align*}
\chi\big(X,\sO_X(mL)\big) &=0,\\ 
\chi\big(X,\Omega^1_X \otimes \sO_X(mL)\big) &= - \frac{1}{2}c_3(X),\\
\chi\big(X,\Omega^2_X \otimes \sO_X(mL)\big) &= \frac{1}{2}c_3(X).
\end{align*}
By Proposition \ref{pro:2.3new} and by Serre duality, 
$$ h^j\big(X,\Omega^q_X \otimes \sO_X(mL)\big) = 0 \quad\text{for }m \gg 0,\ j \in\{0,3\},\text{ and all }q.$$ 
Since 
$$ h^2\big(X,\Omega^q_X \otimes \sO_X(mL)\big) = O(m) $$
by Theorem \ref{thm:fujita}, we obtain $ h^1\big(X,\Omega^q_X \otimes \sO_X(mL)\big) = O(m) $.
\end{proof} 

The following lemma should be well-known. 

\begin{lem}\label{lem:33}
Let $X$ be a projective manifold and let $L$ be a pseudoeffective Cartier divisor on $X$. Let $h$ be a singular hermitian metric on $\OO_X(L)$
with semipositive curvature current  and  multiplier ideal sheaf $\mathcal I(h)$. Let $D$ be an effective Cartier divisor such that $\mathcal I(h)\subseteq\OO_X(-D)$. Then $L-D$ is pseudoeffective.
\end{lem}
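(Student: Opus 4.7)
\medskip
\noindent\textbf{Proof proposal for Lemma \ref{lem:33}.} The plan is to produce, for every integer $m\geq 1$, an effective divisor numerically equivalent to $m(L-D)+A$, where $A$ is one fixed auxiliary divisor independent of $m$. Dividing by $m$ and letting $m\to\infty$ will then exhibit the class of $L-D$ as a limit of effective $\Q$-divisor classes, which is exactly pseudoeffectivity. The sections providing these divisors will come from $H^0\big(X,\OO_X(mL+A)\otimes\mathcal I(h^m)\big)$, which we push into $H^0\big(X,\OO_X(m(L-D)+A)\big)$ via the inclusion $\mathcal I(h^m)\subseteq\OO_X(-mD)$.

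First I would establish the inclusion $\mathcal I(h^m)\subseteq\OO_X(-mD)$. Since $h$ is a singular hermitian metric with semipositive curvature, so is $h^m$, and the subadditivity theorem of Demailly--Ein--Lazarsfeld gives $\mathcal I(h^m)\subseteq\mathcal I(h)^m$. Combined with the hypothesis $\mathcal I(h)\subseteq\OO_X(-D)$, this yields $\mathcal I(h^m)\subseteq \OO_X(-D)^m=\OO_X(-mD)$, the last equality holding because $D$ is Cartier. Next I would fix a very ample divisor $H$ on $X$, set $n=\dim X$ and $A=K_X+(n+1)H$. Applying Nadel vanishing to the semipositive singular metric $h^m$ on $\OO_X(mL)$, after absorbing a small piece of the ample twist $(n+1)H$ into the metric to make the total curvature strictly positive, and combining with Castelnuovo--Mumford regularity, shows that $\OO_X(mL+A)\otimes\mathcal I(h^m)$ is globally generated. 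The sheaf $\mathcal I(h^m)$ is a nonzero ideal sheaf (it coincides with $\OO_X$ on the locus where $h$ is locally bounded), so the globally generated sheaf is nonzero and hence admits a nonzero global section. Composing with $\mathcal I(h^m)\hookrightarrow\OO_X(-mD)$ produces a nonzero section of $\OO_X(m(L-D)+A)$, so this line bundle has an effective divisor in its class.

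The conclusion is then routine: $m(L-D)+A$ is effective for every $m\geq 1$, so $(L-D)+\frac{1}{m}A$ is numerically equivalent to an effective $\Q$-divisor, and letting $m\to\infty$ we see that $L-D$ lies in the closure of the cone spanned by effective classes, hence is pseudoeffective. The main technical ingredients are the subadditivity theorem for multiplier ideals and Nadel vanishing; both are substantial but standard. An alternative, more analytic route would pass through the Siu decomposition of the curvature current $T=\Theta_h$, showing directly that $T-[D]$ is a closed positive $(1,1)$-current representing $c_1(L-D)$. This reduces to checking that the generic Lelong number of $T$ along each prime component of $D$ is at least the corresponding coefficient in $D$; establishing this sharp comparison between the multiplier ideal condition and Lelong numbers (via a converse of Skoda's integrability theorem applied slice by slice) is where the principal obstacle in that alternative approach lies.
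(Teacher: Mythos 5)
Your proof is correct and follows essentially the same route as the paper: the paper cites \cite[Theorem 1.10]{DEL00} for the global generation of $\OO_X(G+mL)\otimes\sI(h^{\otimes m})$ (the Nadel vanishing plus Castelnuovo--Mumford regularity argument you sketch) and \cite[Theorem 2.6]{DEL00} for the subadditivity $\sI(h^{\otimes m})\subseteq\sI(h)^m\subseteq\OO_X(-mD)$, then concludes exactly as you do by writing $L-D$ as the limit of the effective classes $\frac1m\big(m(L-D)+G\big)$. The only difference is that you re-derive the global generation statement rather than quoting it.
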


\begin{proof} 
By \cite[Theorem 1.10]{DEL00} there exists an ample line bundle $G$ on $X$ such that $\OO_X(G+mL)\otimes \sI(h^{\otimes m}) $ is globally generated for all $m \geq 1$. Since 
$$\sI(h^{\otimes m})\subseteq \sI(h)^m\subseteq\OO_X(-mD),$$ 
where the first inclusion follows from \cite[Theorem 2.6]{DEL00}, for all $m \geq 1$ we have
$$ H^0\big(X,G+m(L-D)\big) \ne 0.$$
Hence $L-D=\lim\limits_{m\to\infty}\frac{1}{m}\big(m(L-D)+G\big)$ is pseudoeffective. 
\end{proof}

\begin{cor}  \label{cor:van}
Let $X$ be a Calabi-Yau threefold and let $L$ be a nef Cartier divisor on $X$ with $\kappa (X,L) = {-} \infty$. Then there exists a positive integer $m_0$ such that 
$$H^q(X,mL) = 0 \quad\text{for all }q\text{ and }m \geq m_0.$$ 
\end{cor}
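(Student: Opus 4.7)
The plan is to treat each cohomological degree in turn. For $q=0$, the hypothesis $\kappa(X,L)=-\infty$ immediately gives $H^0(X,mL)=0$ for all $m\geq 1$. For $q=3$, Serre duality and $\omega_X\simeq\OO_X$ yield $H^3(X,mL)\simeq H^0(X,-mL)^{*}$, so it is enough to show $H^0(X,-mL)=0$ for $m$ large. Since $\Pic(X)\simeq N^1(X)$ on a Calabi--Yau threefold, $\kappa(X,L)=-\infty$ excludes $L\equiv 0$, so $L$ represents a non-zero nef class; a Hodge-index argument in the style of Lemma~\ref{lem:hodge} then gives $L\cdot H^2>0$ for every ample $H$. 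In particular $-L$ is not pseudoeffective, so $H^0(X,-mL)=0$ for every $m\geq 1$, and $H^3(X,mL)=0$.

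Next, Proposition~\ref{prop1}(iii) gives $\chi(X,mL)=0$ for every $m$. Combined with the vanishings in degrees $0$ and $3$, this forces $h^1(X,mL)=h^2(X,mL)$ for $m\geq m_0$, so it remains to prove $H^2(X,mL)=0$ for $m$ large. For this, I would use that $L$ is pseudoeffective, so $\OO_X(L)$ carries a singular hermitian metric $h$ with semipositive curvature current; take $h^m$ on $\OO_X(mL)$. Nadel vanishing, together with $K_X\equiv 0$, gives
\[
H^{q}\bigl(X,\OO_X(mL)\otimes\mathcal I(h^m)\bigr)=0\quad\text{for all }q\geq 1,
\]
and tensoring the short exact sequence $0\to\mathcal I(h^m)\to\OO_X\to\OO_X/\mathcal I(h^m)\to 0$ by $\OO_X(mL)$ yields
\[
H^{q}(X,mL)\simeq H^{q}\bigl(X,\OO_X(mL)\otimes\OO_X/\mathcal I(h^m)\bigr)\quad\text{for }q\geq 1.
\]
If, for $m$ large, the support of $\OO_X/\mathcal I(h^m)$ has codimension $\geq 2$ in $X$, the right-hand side vanishes in degrees $q\geq 2$, and $H^2(X,mL)=0$ follows.

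The remaining point is to exclude non-trivial divisorial components of $\mathcal I(h^m)$. Assume otherwise, so that $\mathcal I(h^m)\subseteq\OO_X(-D)$ for some effective non-zero Cartier divisor $D$. Lemma~\ref{lem:33} applied to $mL$ with metric $h^m$ then forces $mL-D$ to be pseudoeffective. On the other hand, Proposition~\ref{prop1}(ii) gives $L^3=0$, and combined with $L\not\equiv 0$ this means $\nu(X,L)\in\{1,2\}$; Lemma~\ref{lem:l-d} (respectively Remark~\ref{remtriv}) applied to the nef divisor $mL$ then shows that $mL-D$ cannot be pseudoeffective for any non-zero effective $D$, a contradiction. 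The main obstacle will be precisely this last step in the regime $\nu(X,L)=2$ with $\rho(X)\geq 3$, which is not covered by Remark~\ref{remtriv}; a Hodge-index argument on a general hyperplane section patterned on the proof of Lemma~\ref{lem:l-d} should fill the gap.
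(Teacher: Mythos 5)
There is a genuine gap at the central step. You invoke ``Nadel vanishing'' to conclude $H^{q}\bigl(X,\OO_X(mL)\otimes\mathcal I(h^m)\bigr)=0$ for all $q\geq 1$ from a metric $h$ with merely \emph{semipositive} curvature current. Nadel vanishing requires the curvature to dominate a positive multiple of a K\"ahler form (i.e.\ essentially that the line bundle be big); for a nef $L$ with $\nu(X,L)\leq 2<3$ no such metric exists, and with only semipositive curvature the claimed vanishing is false in general --- indeed, proving $H^2\bigl(X,\OO_X(mL)\otimes\mathcal I(h^m)\bigr)=0$ is essentially the content of the corollary, so this step is circular. The paper's substitute is the hard Lefschetz theorem with multiplier ideals of Demailly--Peternell--Schneider \cite[Theorem 0.1]{DPS01}, which for a pseudoeffective $(L,h)$ with semipositive curvature gives a \emph{surjection}
$$H^0\bigl(X,\Omega^1_X\otimes\OO_X(mL)\otimes\sI(h^{\otimes m})\bigr)\to H^2\bigl(X,\OO_X(mL)\otimes\sI(h^{\otimes m})\bigr),$$
and the source vanishes by Proposition \ref{pro:2.3new}. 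That is the missing idea; the rest of your multiplier-ideal argument (excluding divisorial components of $V_m$ via Lemma \ref{lem:33} and Lemma \ref{lem:l-d}, then killing $H^2$ of the quotient supported in dimension $\leq 1$) matches the paper.

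A secondary problem is your treatment of $\nu(X,L)=2$. The paper disposes of that case at the outset by Kawamata--Viehweg vanishing for nef divisors of numerical dimension $2$ (giving $H^q(X,mL)=0$ for $q\geq 2$ directly), and runs the multiplier-ideal argument only when $\nu(X,L)=1$. Your proposed patch --- extending Lemma \ref{lem:l-d} to $\nu(X,L)=2$ with $\rho(X)\geq 3$ by a Hodge-index argument on a general hyperplane section --- cannot work: the proof of Lemma \ref{lem:l-d} hinges on $(L|_H)^2=L^2\cdot H=0$, whereas for $\nu(X,L)=2$ one has $L^2\cdot H>0$, so $L|_H$ is nef and big on $H$ and the Hodge index theorem gives no contradiction. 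Restricting the multiplier-ideal argument to $\nu(X,L)=1$, where Lemma \ref{lem:l-d} genuinely applies, removes the need for any such extension. The remaining parts of your write-up ($q=0$, $q=3$ via Serre duality and non-pseudoeffectivity of $-L$, and the reduction of $q=1$ to $q=2$ via $\chi(X,mL)=0$) are correct and agree with the paper.
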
 

\begin{proof} 
If $\nu(X,L) = 2$, then $H^q(X,mL) = 0$ for $q\geq2$ and all $m\geq1$ by the Kawamata-Viehweg vanishing \cite[Corollary]{Kaw82}. Since $\chi(X,mL) = 0$ for all $m$ by Proposition \ref{prop1}(iii), and since $H^0(X,mL) = 0$ by assumption, we also have $H^1(X,mL)=0$ for $m\geq 1$. 

So we may assume that $\nu(X,L) = 1$. Let $h$ be a singular metric on $\OO_X(L)$ with semipositive curvature current. Let $\sI(h^{\otimes m})$ be the multiplier ideal of the associated metric $h^{\otimes m}$ on $\OO_X(mL)$ and denote by $V_m\subseteq X$ the subspace defined by $\sI(h^{\otimes m})$. 

The subspace $V_m$ cannot contain an effective divisor $D$: otherwise, by Lemma \ref{lem:33}, $mL-D$ would be pseudoeffective, which would contradict Lemma \ref{lem:l-d}. Thus $\dim V_m\leq1$. The Hard Lefschetz theorem \cite[Theorem 0.1]{DPS01} gives the surjection
$$H^0\big(X,\Omega^1_X \otimes \OO_X(mL) \otimes \sI(h^{\otimes m})\big)\to H^2\big(X,\OO_X(mL)\otimes \sI(h^{\otimes m})\big)$$
hence $H^2\big(X,\OO_X(mL)\otimes \sI(h^{\otimes m})\big)=0$ by Proposition \ref{pro:2.3new}.
From the long cohomology sequence associated to the exact sequence 
$$0\to \OO_X(mL)\otimes\sI(h^{\otimes m})\to \OO_X(mL)\to \OO_{V_m}(mL)\to0$$
we obtain
$$ H^2(X,mL) =0\quad\text{for }m\geq m_0.$$
Since $H^3(X,mL) = 0$ for $m\geq1$ by Serre duality, we conclude as above.
\end{proof}

In this context, we note the following:

\begin{thm} \label{thm:multiplier} 
Let $X$ be a Calabi-Yau threefold and let $L$ be a nef divisor on $X$ with $\nu(X,L)=1$. Assume that there is a singular metric $h$ on $\sO_X(L)$ with semipositive curvature current such that the multiplier ideal sheaf $\sI(h)$ is different from $\sO_X$. Then $L$ is semiample.
\end{thm}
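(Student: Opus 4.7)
The plan is a proof by contradiction. Suppose $L$ is not semiample. By the basepoint-free theorem together with log abundance on Calabi-Yau threefolds (as recalled in the introduction), this is equivalent to $\kappa(X,L)=-\infty$. The goal is to use the hypothesis $\sI(h)\neq \OO_X$ to manufacture a nonzero section of some $mL$ and derive a contradiction.

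The cohomological core of the argument is to show that
\[
H^i(V_m,\OO_{V_m}(mL))=0\quad\text{for every }i\text{ and every }m\gg 0,
\]
where $V_m\subseteq X$ is the closed subscheme cut out by $\sI(h^m)$. On the one hand, Corollary \ref{cor:van} yields $H^q(X,mL)=0$ for all $q$ and $m\gg 0$. On the other hand, the Hard Lefschetz theorem of Demailly-Peternell-Schneider applied to the pseudoeffective line bundle $mL$ with its singular metric $h^m$ provides, for $q=1,2$, surjections
\[
H^0(X,\Omega^{3-q}_X\otimes mL\otimes\sI(h^m))\twoheadrightarrow H^q(X,mL\otimes\sI(h^m))
\]
whose sources embed into $H^0(X,\Omega^{3-q}_X\otimes mL)$, which vanishes for $m\gg 0$ by Proposition \ref{pro:2.3new}. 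Feeding these vanishings into the long exact sequence of
\[
0\to mL\otimes\sI(h^m)\to mL\to \OO_{V_m}(mL)\to 0
\]
and using $H^i(V_m,\cdot)=0$ for $i\geq 2$ (since $\dim V_m\leq 1$) produces the claimed vanishings on $V_m$.

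Now, since $\sI(h)\neq \OO_X$ and $\sI(h^m)\subseteq\sI(h)$, the subscheme $V_m$ is nonempty; by Lemma \ref{lem:l-d} together with Lemma \ref{lem:33} it contains no effective divisor, so $\dim V_m\leq 1$. If $\dim V_m=0$ for some large $m$, then $\OO_{V_m}(mL)\simeq \OO_{V_m}$ (any line bundle on an Artinian scheme is trivial), whence $H^0(V_m,\OO_{V_m}(mL))=H^0(V_m,\OO_{V_m})\neq 0$, directly contradicting the vanishing just proved.

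The main obstacle is the remaining case $\dim V_m=1$. Here the established vanishings imply $\chi(V_m,\OO_{V_m}(mL))=0$, and Riemann-Roch on the $1$-dimensional scheme $V_m$ combined with the nefness of $L$ forces $L\cdot C=0$ on every $1$-dimensional component $C$ of $V_m$, and $\chi(V_m,\OO_{V_m})=0$. The plan is to exploit that $\nu(X,L)=1$ makes the curvature current $T$ of $h$ satisfy $T\wedge T=0$, and that by Lemma \ref{lem:l-d} its Siu decomposition has no divisorial part; this rigidity of $T$ should preclude a $1$-dimensional subscheme $V_m$ on which the whole cohomology of $\OO_{V_m}(mL)$ vanishes, producing the final contradiction.
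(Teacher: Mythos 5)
There is a genuine gap. The cohomological half of your argument is sound --- it is essentially the machinery already used in the paper's proof of Corollary \ref{cor:van} (subadditivity to get $V_m\supseteq V\neq\emptyset$, Lemma \ref{lem:33} plus Lemma \ref{lem:l-d} to exclude divisorial components, hard Lefschetz plus Proposition \ref{pro:2.3new} to kill $H^q(X,\OO_X(mL)\otimes\sI(h^{\otimes m}))$ for $q=1,2$) --- and the case $\dim V_m=0$ does yield a clean contradiction. But the case $\dim V_m=1$ is not resolved, and it is exactly the case you cannot exclude a priori. First, the Riemann--Roch deduction there is incorrect: since the subscheme $V_m$ itself changes with $m$, for each $m$ you only obtain the single relation $m\deg(L|_{V_m})+\chi(V_m,\OO_{V_m})=0$, and because $\chi(V_m,\OO_{V_m})$ can be negative and unbounded, this does not separate into $\deg(L|_{V_m})=0$ and $\chi(V_m,\OO_{V_m})=0$. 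Second, the proposed endgame is only a hope: $T\wedge T$ is not a well-defined current in general (and $[T]^2=0$ in cohomology does not force $T\wedge T=0$ even when the product makes sense), and no argument is given ruling out a one-dimensional scheme all of whose cohomology with coefficients in $\OO_{V_m}(mL)$ vanishes --- this certainly happens for abstract one-dimensional schemes, e.g.\ $\OO_{\PS^1}(-1)$, so some genuinely new input would be needed.

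The paper's proof sidesteps the entire case analysis with a two-line blow-up trick: pick a single closed point $x\in V$, let $\pi\colon\hat X\to X$ be the blow-up at $x$ with exceptional divisor $E$, and observe that the induced metric $\hat h$ on $\pi^*\OO_X(L)$ satisfies $\sI(\hat h)\subseteq\pi^{-1}\sI(h)\cdot\OO_{\hat X}\subseteq\OO_{\hat X}(-E)$ by the functoriality of multiplier ideals. Lemma \ref{lem:33}, applied on $\hat X$, then makes $\pi^*L-E$ pseudoeffective, which contradicts Lemma \ref{lem:l-d} on $\hat X$ unless $\kappa(X,L)\geq0$, i.e.\ unless $L$ is semiample. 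The point is that blowing up converts the non-triviality of $\sI(h)$ at a single point into a genuine divisorial obstruction, which is exactly what Lemma \ref{lem:l-d} is designed to exclude; your approach tries to extract the contradiction downstairs on $X$, where the locus $V_m$ may be a curve and the divisorial lemma has no purchase.
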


\begin{proof} 
Let $V \subseteq X$ be the subspace defined by $\sI(h)$, and let $x$ be a closed point in $V$ with ideal sheaf $\sI_x$ in $X$. Let $\pi\colon \widehat X \to X$ be the blowup of $X$ at $x$ and let $E = \pi^{-1}(x) $ be the exceptional divisor. Let $\widehat h$ be the induced metric on $\pi^*\OO_X(L)$. By \cite[Proposition 14.3]{De01}, we have
$$\sI(\widehat h) \subseteq \pi^{-1}\sI(h)\cdot\OO_{\widehat X}\subseteq\pi^{-1}\mathcal I_x\cdot\OO_{\widehat X}=\OO_{\widehat X}(-E).$$ 
By Lemma \ref{lem:33}, the divisor $\pi^*L - E$ is pseudoeffective, hence $\pi^*L$ is semiample by Lemma \ref{lem:l-d}. 
\end{proof} 

\section{Log differentials}

The following result is crucial in the proof of Theorem \ref{MT2}.

\begin{lem}\label{lem:HN}
Let $X$ be a projective manifold and let $C \subseteq X$ be an irreducible curve on $X$ such that $K_X\cdot C\geq0$. Let $\mathcal L$ be an ample line bundle on $X$. Then there exists a positive integer $m_0$ such that for all $m \geq m_0$ and for a general element $D \in | \mathcal L^{\otimes m} |$, the sheaf $\Omega^1_X(\log D) |_C$ is nef. 
\end{lem}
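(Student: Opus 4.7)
The approach combines the residue sequence for $\Omega^1_X(\log D)|_C$ with a dimension count on $|\mathcal{L}^{\otimes m}|$ and the Harder--Narasimhan filtration of $\Omega^1_X|_C$.

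By pulling back along the normalization map, it suffices to treat the case when $C$ is smooth. For $D \in |\mathcal{L}^{\otimes m}|$ general with $m \gg 0$, $D$ is smooth and transverse to $C$, so $D \cap C$ consists of $k := m\mathcal{L} \cdot C$ distinct points and the residue sequence
\[
0 \to \Omega^1_X|_C \to \Omega^1_X(\log D)|_C \to \mathcal{O}_{D \cap C} \to 0
\]
is exact on $C$. By Lemma \ref{lem:nefVB}(c), nefness of $\Omega^1_X(\log D)|_C$ amounts to every quotient having nonnegative degree. Given a quotient $\mathcal{Q}$ of $\Omega^1_X(\log D)|_C$ with kernel $\mathcal{K}$ of rank $r$, set $\mathcal{K}' := \mathcal{K} \cap \Omega^1_X|_C$ and $\mathcal{Q}'' := \Omega^1_X|_C/\mathcal{K}'$. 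Then $\mathcal{K}/\mathcal{K}' \hookrightarrow \mathcal{O}_{D \cap C}$ has some length $\ell \leq k$, and one has the key identity
\[
\deg \mathcal{Q} = \deg \mathcal{Q}'' + k - \ell.
\]

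In particular, if $\Omega^1_X|_C$ is nef (all $\mathcal{Q}''$ have nonnegative degree), then $\deg \mathcal{Q} \geq k-\ell \geq 0$, so $\Omega^1_X(\log D)|_C$ is nef automatically. Otherwise, the bad quotients $\mathcal{Q}''$ with $\deg \mathcal{Q}'' = -a < 0$ can occur, but the hypothesis $K_X \cdot C = \deg \Omega^1_X|_C \geq 0$ controls $\mu_{\min}(\Omega^1_X|_C)$ via the balance of the HN filtration, bounding $a$. The family of such bad $\mathcal{Q}''$ is therefore parametrized by finitely many projective Quot schemes of total dimension at most some constant $M$.

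From the key identity, a bad quotient $\mathcal{Q}$ of $\Omega^1_X(\log D)|_C$ arising from a fixed bad $\mathcal{Q}''$ requires $\ell > k - a$; that is, the residue of $D$ at \emph{at least} $k - a$ of the intersection points $D \cap C$ must lie in the subspace of $\Omega^1_X(\log D)|_C$ spanned by $\mathcal{K}'$. At each such point, this imposes codimension $n - r$ on the tangent hyperplane of $D$. Provided the conditions at distinct intersection points are independent---a uniform jet-ampleness property that follows from Serre vanishing on $\mathcal{L}^{\otimes m} \otimes \mathcal{I}_C^2$ for $m \gg 0$---the bad locus in $|\mathcal{L}^{\otimes m}|$ associated to this $\mathcal{Q}''$ has codimension at least $(k - a)(n - r)$. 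Taking the union over the bounded family of bad $\mathcal{Q}''$ of dimension at most $M$, the total bad locus has codimension at least $(k - a)(n - r) - M$, which is positive once $m \geq m_0$ with $m_0$ depending only on $M$, $a$, $n$, $r$, and $\mathcal{L}\cdot C$. The general $D \in |\mathcal{L}^{\otimes m}|$ with $m \geq m_0$ therefore avoids the bad locus, so $\Omega^1_X(\log D)|_C$ is nef.

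The main obstacle is the precise codimension count, in particular the uniform independence of the ``residue in $\mathcal{K}'$'' conditions at the $k = m\mathcal{L}\cdot C$ intersection points---a number that grows with $m$. This relies on the Serre vanishing-based surjection $H^0(X, \mathcal{L}^{\otimes m}) \twoheadrightarrow H^0(X, \mathcal{L}^{\otimes m} \otimes \mathcal{O}_X/\mathcal{I}_C^2)$ for $m \gg 0$, which provides the independent control of the normal 1-jet of sections of $\mathcal{L}^{\otimes m}$ at any finite subset of $C$ and thereby legitimizes the dimension count above.
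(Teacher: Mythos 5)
Your framework is genuinely different from the paper's, and much of it is sound: the identity $\deg\mathcal Q=\deg\mathcal Q''+k-\ell$ is correct, the reduction to saturated subsheaves $\mathcal K'\subseteq\nu^*(\Omega^1_X|_C)$ with quotient of bounded negative degree (hence a bounded family of Quot schemes) is fine, and the observation that a point $p\in D\cap C$ contributes to $\ell$ exactly when $d\varphi(p)\in\mathcal K'\otimes\C(p)$ --- a codimension $n-r$ condition --- is precisely the ``claim'' proved at the end of the paper's argument. The gap is the step you yourself flag as the main obstacle, and it is genuine: you must show that these conditions, together with $s(p_i)=0$, are independent at $k-a\sim m\,\mathcal L\cdot C$ points \emph{simultaneously}, uniformly over all configurations on $C$ and over the Quot scheme. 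This does not follow from the Serre vanishing you invoke. Surjectivity of $H^0(X,\mathcal L^{\otimes m})\to H^0(X,\mathcal L^{\otimes m}\otimes\OO_X/\mathcal I_C^2)$ only reduces the problem to sections of the rank-$n$ sheaf $(\OO_X/\mathcal I_C^2)\otimes\mathcal L^{\otimes m}$ on $C$; there one must still impose $n-r+1$ linear conditions at each of $k-a$ points, and the relevant elementary modification has degree $\deg N^*_C+nk-(k-a)(n-r+1)$, which for $r=1$ equals $\deg N^*_C+na$ and stays bounded as $m\to\infty$, so the required independence/$H^1$-vanishing is not automatic and can fail for special configurations. (A smaller issue: the condition $ds(p)\in\mathcal K'\otimes\C(p)$ also involves the tangential derivative $d(s|_C)(p)$, which is not recorded by the fiber of $\OO_X/\mathcal I_C^2$ at $p$.) A telling symptom is that your argument never really uses $K_X\cdot C\geq0$: the bound on $a$ by $n\,|\mu_{\min}(\nu^*(\Omega^1_X|_C))|$ holds for any fixed $C$.

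The paper avoids the growing-number-of-points problem entirely, and this is exactly where $K_X\cdot C\geq0$ enters: if $\nu^*(\Omega^1_X|_C)$ is not nef it is unstable of nonnegative degree, so its maximal destabilizing subsheaf $\mathcal F$ has positive slope and is \emph{ample}. By Lemma \ref{lem:nefVB}(d) it then suffices that the quotient $\mathcal G\otimes\OO_{\tilde C}\big(\sum z_i\big)$ be nef, and for that one only needs a \emph{fixed} number $N$ of points of $D\cap C$ at which $d\varphi\notin\mathcal F$, with $N$ depending only on $\mathcal G$ and not on $m$. Producing one section with prescribed $1$-jets at $N$ fixed points is ordinary Serre vanishing for the square of the ideal of those points; the condition is open in $|\mathcal L^{\otimes m}|$, and Bertini finishes. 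To rescue your approach you would need either a similar reduction to a bounded number of conditions, or a uniform jet-interpolation statement along $C$ at $\Theta(m)$ points, which is a substantially stronger input than what you have supplied.
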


\begin{proof} 
Let $\nu\colon \widetilde C \to C$ be the normalisation.  If $\Omega_X^1 |_C$ is nef, the assertion follows from the residue sequence and from Lemma \ref{lem:nefVB}(d). So we may assume that $\Omega^1_X |_C$ is not nef, or equivalently, that $\nu^*(\Omega^1_X |_C)$ is not nef. Since $\deg\nu^*(\Omega^1_X |_C)\geq0$ by assumption, the bundle $\nu^*(\Omega^1_X |_C)$ is not semistable by Lemma \ref{lem:nefVB}(c), and let $\mathcal F$ be its maximal destabilising subsheaf. Then $\mathcal F$ has positive slope, hence is ample by Lemma \ref{lem:nefVB}(c).

We obtain an exact sequence 
$$0 \to \mathcal F \to \nu^*(\Omega^1_X |_C) \to \mathcal G \to 0,$$
where $\mathcal G$ is a locally free sheaf on $\widetilde C$. Choose a positive integer $N$ and a very ample line bundle $\mathcal A$ on $\widetilde C$ with $\deg  \mathcal A = N$, such that $\mathcal G \otimes \mathcal A$ is globally generated. Then it follows that $\mathcal G \otimes \mathcal A'$ is nef for all ample divisors $\mathcal A'$ on $C$ of degree $\geq N$. 

Fix smooth points $x_1, \ldots x_N\in C$ and let $\mathcal I$ be the corresponding ideal sheaf; since $\nu$ is an isomorphism around $x_j$, we consider $x_j$ also as points on $\widetilde C$. We may choose finitely many (analytically) open sets $U_1, \ldots, U_m$ in $X$ such that:
\begin{enumerate}
\item[(a)] $U_i$ are pairwise disjoint,
\item[(b)] we fix trivialisations $\mathcal L|_{U_i}\simeq \OO_{U_i}$,
\item[(c)] $x_j \in \bigcup_{i=1}^m U_i$ for every $j=1,\dots, N$, 
\item[(d)] the complement of $\bigcup_{i=1}^m U_i$ in $X$ has measure $0$, 
\item[(e)] the complement of $C\cap\bigcup_{i=1}^m U_i$ in $C$ is finite and contains all the singular points of $C$. 
\end{enumerate}
Indeed, since $X$ is compact, there are finitely many (analytically) open subsets $V_1,\dots,V_m$ which cover $X$. We may assume that for each $j=1,\dots,N$ we have $x_j\notin\bigcup_{i=1}^m(\overline{V_i}\setminus V_i)$, where $\overline{V_i}$ is the closure of $V_i$ in the analytic topology. We may also assume that $C\cap \bigcup_{i=1}^m(\overline{V_i}\setminus V_i)$ is finite. Let $C^{\textrm{sing}}$ be the singular set of $C$. Then we set $U_1:=V_1\setminus C^{\textrm{sing}}$, and $U_{i+1}:=V_i\setminus\Big(\overline{\bigcup_{j=1}^{i-1} V_j}\cup C^{\textrm{sing}}\Big)$ for $i=1,\dots,m-1$.

Thus, by (a) and (b) we may speak of  the derivative  $ds(x)$ of a section $s$ of $\mathcal L$ at any point of $\bigcup_{i=1}^m U_i$. For $j=1,\dots,N$, fix
$$v_j \in \nu^*(\Omega^1_X |_C) \otimes \C(x_j) \setminus \mathcal F \otimes \C(x_j),$$
which we also view as elements of $\Omega^1_X |_C\otimes \C(x_j)$. Choose a positive integer $m_0 \geq N$ such that $\mathcal L^{\otimes m_0}$ is very ample  and  such that 
$$H^1(X,\mathcal I^2\otimes\mathcal L^{\otimes m})=0\quad\text{for all }m\geq m_0.$$ 
Fix $m \geq m_0$. Then this vanishing implies that the restriction map
$$H^0(X,\mathcal L^{\otimes m})\to H^0(X,\mathcal L^{\otimes m}\otimes\OO_X/\mathcal I^2)$$
is surjective, and hence there exists a section $s\in H^0(X,\mathcal L^{\otimes m})$ such that
\begin{equation}\label{eq:section}
s(x_j) = 0\quad\text{and}\quad ds(x_j) = v_j \quad\text{for every $j$.}
\end{equation}

\medskip

Now, let 
$$ M \subseteq H^0(X,\mathcal L^{\otimes m}) $$
be the subspace of all sections $s\in H^0(X,\mathcal L^{\otimes m})$, for which there exists points $y_1, \dots, y_N \in \widetilde C \cap \nu^{-1}\big(\bigcup_{i=1}^m U_i\big)$ such that for all $j$ we have
$$ s(y_j) = 0\quad\text{and}\quad  ds(y_j) \not \in \mathcal F \otimes \C(y_j).$$ 
Then $M \neq \emptyset $ by \eqref{eq:section} and by (c), and the set $M$ is clearly open. Therefore, by Bertini and by (e), there exists a smooth element $D\in|\mathcal L^{\otimes m}|$, meeting $C$ transversally at points $z_1,\dots,z_\ell$ (with $\ell \geq N$) in the locus $C\cap \bigcup_{i=1}^m U_i$, and such that $D$ is in $M$. By relabelling, we may assume that $A=\{z_1,\dots,z_N\} \subseteq C$ is the set of points such that 
\begin{equation}\label{eq:d}
d\varphi(z_i) \not \in \mathcal F \otimes \C(z_i),
\end{equation}
where $\varphi$ is the local equation of $D$ (in the given local trivialisation of $\mathcal L$).

\medskip

We claim that the sheaf $\sF$ is saturated in $\nu^*(\Omega^1_X(\log D) |_C)$ at the points of $A$. Granting the claim for the moment, let us see how it implies the lemma. We obtain the exact sequence
$$\textstyle 0 \to \mathcal F \to \nu^*(\Omega^1_X(\log D) |_C) \to \mathcal G' \to 0, $$
with 
$$\textstyle \mathcal G'/{\rm torsion} \simeq \mathcal G \otimes \OO_{\widetilde C}\big(\sum_{z_i\in A'} z_i\big),$$
where $A' \subseteq \widetilde C$ is a set containing $A$. By our choice of $N$, the vector bundle $\mathcal G \otimes \OO_{\widetilde C}\big(\sum_{z_i\in A'} z_i\big) $ is nef. Since $\sF$ is ample, the bundle $\nu^*(\Omega^1_X(\log D) |_C)$ is nef by Lemma \ref{lem:nefVB}(d).  

\medskip

Finally, we prove the claim. It is enough to show that for any $z_i \in A$, the linear map 
$$ \mathcal F \otimes \C(z_i)  \to \nu^*(\Omega^1_X(\log D) |_C) \otimes \C(z_i)$$
has rank $\rk\mathcal F$. Consider the diagram:
\[
\xymatrix{ \mathcal F \otimes \C(z_i) \ar[r] \ar[dr] & \nu^*(\Omega^1_X |_C) \otimes \C(z_i) \ar[d]^{\alpha} \\
 & \nu^*(\Omega^1_X(\log D) |_C) \otimes \C(z_i)
          }
\]
and note that $\ker\alpha=\C(z_i)d\varphi(z_i)$. Therefore, $\big(\mathcal F \otimes \mathbb C(z_i)\big)\cap\ker\alpha=0$ by \eqref{eq:d}, and the claim follows.
\end{proof} 

\section{A semiampleness criterion} \label{sec:ample} 

In this section we establish Theorem \ref{thm:introample}: 

\begin{thm} \label{ample} 
Let $X$ be a Calabi-Yau threefold with $c_3(X) \ne 0$ and let $L$ be a nef divisor on $X$ with $\nu(X,L) =2$. Let $G$ be a smooth ample divisor on $X$. Assume that there exists a very ample divisor $H$ and a positive integer $m_1$ such that for general $D \in | H |$ (so that $G+D$ has simple normal crossings) the following holds:
\begin{enumerate} 
\item[(i)]  the locally free sheaf $\Omega^1_X\big(\log(D+G)\big) \otimes \OO_X(mL)$ is nef for $m \geq m_1$, 
\item[(ii)] the divisor $L|_{D+G} $ is ample. 
\end{enumerate} 
Then $L$ is semiample. 
\end{thm}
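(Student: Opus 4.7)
The plan is to argue by contradiction: suppose $L$ is not semiample. Proposition \ref{prop1} then gives $\kappa(X,L)=-\infty$, $L^3=L\cdot c_2(X)=0$, and
$$\chi(X,\Omega^1_X\otimes\sO_X(mL))=-\tfrac12\,c_3(X)\quad\text{for every }m,$$
a non-zero constant by hypothesis. My goal is to combine (i), (ii) and Lemma \ref{van} together with Proposition \ref{pro:2.3new} and Theorem \ref{thm:CP11} to produce enough vanishing of $H^q(X,\Omega^1_X\otimes\sO_X(mL))$ in sign-prescribed ranges of $m$ to force $\chi=0$ via Hirzebruch--Riemann--Roch, contradicting $c_3(X)\ne0$.

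The crucial step is $H^2(X,\Omega^1_X\otimes\sO_X(mL))=0$ for $m\gg0$. Set $\mathcal E_m=\Omega^1_X(\log D)\otimes\sO_X(mL)$. Hypothesis (i) makes $\mathcal E_m$ nef for $m\ge m_1$, and (ii) forces $L^2\cdot D=(L|_D)^2>0$, so Proposition \ref{prop1}(vi) shows $s_3(\mathcal E_m)>0$ for $m$ large: hence $\mathcal E_m$ is also big. Apply Lemma \ref{van} with $\omega_X\simeq\sO_X$, $\mathcal E=\mathcal E_{m_0}$, $\det\mathcal E_{m_0}=\sO_X(3m_0L+D)$ and the nef line bundle $\mathcal M=\sO_X(kL)$: this yields
$$H^q\big(X,\Omega^1_X(\log D)\otimes\sO_X(nL+D)\big)=0\quad\text{for }q\ge1,\ n\ge 4m_0.$$
To strip the spurious $+D$, tensor the ideal sequence $0\to\sO_X\to\sO_X(D)\to\sO_D(D)\to0$ with $\Omega^1_X(\log D)\otimes\sO_X(nL)$: the long exact sequence reduces the vanishing of $H^2(X,\Omega^1_X(\log D)\otimes\sO_X(nL))$ to a vanishing of $H^1$ on $D$ of the restriction. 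Dissecting $\Omega^1_X(\log D)|_D$ via the log-residue sequence $0\to\Omega^1_X|_D\to\Omega^1_X(\log D)|_D\to\sO_D\to0$ and the conormal sequence $0\to\sO_D(-D|_D)\to\Omega^1_X|_D\to\Omega^1_D\to0$, one reduces further to Serre vanishings on $D$, all available because $L|_D$ is ample. The residue sequence $0\to\Omega^1_X\to\Omega^1_X(\log D)\to\sO_D\to0$ twisted by $\sO_X(mL)$, together with one more Serre vanishing for $mL|_D$, then gives $H^2(X,\Omega^1_X\otimes\sO_X(mL))=0$ for $m\gg0$.

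For the remaining groups of $\Omega^1_X\otimes\sO_X(mL)$, Proposition \ref{pro:2.3new} gives $H^0=0$ for $m\gg0$. Serre duality together with $\Omega^2_X\simeq T_X$ (using $\omega_X$ trivial) identifies $H^3(X,\Omega^1_X\otimes\sO_X(mL))$ with $H^0(X,T_X\otimes\sO_X(-mL))^\vee$; a non-zero section would yield a map $\Omega^1_X\to\sO_X(-mL)$ whose torsion-free image $\mathcal Q$ satisfies $c_1(\mathcal Q)=-mL-E$ for some effective $E$, which Theorem \ref{thm:CP11} makes pseudoeffective, contradicting nefness of $mL$ together with $\nu(X,L)=2$ for $m\ge1$. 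To kill $H^1$, I repeat the residue--Lemma \ref{van} cascade with $\wedge^2\mathcal E_m=\Omega^2_X(\log D)\otimes\sO_X(2mL)$ in place of $\mathcal E_m$: it is nef by Schur-positivity, big by a parallel Segre-class computation, and the analogous argument yields $H^2(X,\Omega^2_X\otimes\sO_X(mL))=0$ for $m\gg0$; by Serre duality this is $H^1(X,\Omega^1_X\otimes\sO_X(m'L))=0$ for $m'\ll0$. Dual Theorem \ref{thm:CP11}-style arguments applied to $\Omega^2_X$ clear $H^0$ and $H^3$ of $\Omega^1_X\otimes\sO_X(m'L)$ for $m'\ll0$.

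Hirzebruch--Riemann--Roch now applies in two regimes: for $m\gg0$, $h^0=h^2=h^3=0$ yields $\chi=-h^1\le0$, hence $c_3(X)\ge0$; for $m'\ll0$, $h^0=h^1=h^3=0$ yields $\chi=h^2\ge0$, hence $c_3(X)\le0$. Therefore $c_3(X)=0$, contradicting the hypothesis, so $L$ is semiample. The main obstacle is the second paragraph: Lemma \ref{van} naturally produces a vanishing for $\mathcal E\otimes\det\mathcal E$, carrying a parasitic $+D$, and removing it relies on the delicate cascade of residue and conormal sequences on $D$ fueled by ampleness of $L|_D$; the $\Omega^2$ variant needed to reach $H^1$ is formally parallel but demands a separate bigness verification for $\wedge^2\mathcal E_m$.
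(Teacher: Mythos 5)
Your proposal is correct, and its skeleton — argue by contradiction, establish the key vanishing $H^2\big(X,\Omega^1_X\otimes\sO_X(mL)\big)=0$ for $m\gg0$ via nefness and bigness of $\mathcal E_m$ (bigness from $s_3(\mathcal E_m)>0$, Proposition \ref{prop1}(vi)), apply Lemma \ref{van} to $\mathcal E\otimes\det\mathcal E$, strip the parasitic $+D$ with the ideal sequence of $D$ and Serre vanishing on $D$, and finally squeeze $\chi=-\tfrac12 c_3(X)$ between $\leq 0$ (for $m\gg0$) and $\geq0$ (for $m\ll0$) — is exactly the paper's (Propositions \ref{pro:H2vanishing} and \ref{pro:H1vanishing} plus the concluding Riemann--Roch count). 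The one place you genuinely diverge is the vanishing $H^1\big(X,\Omega^1_X\otimes\sO_X(m'L)\big)=0$ for $m'\ll0$: the paper proves this directly from the residue sequence \eqref{eq:residue3} twisted by $\sO_X(-mL)$, killing the left-hand term by the Esnault--Viehweg logarithmic Akizuki--Nakano vanishing \cite[Corollary 6.4]{EV92} since $D+mL$ is ample, and the right-hand term by Serre duality and Serre vanishing on $D$; notably this uses only hypothesis (ii), not the nefness hypothesis (i). You instead rerun the nef-and-big machinery on $\wedge^2\mathcal E_m\simeq\Omega^2_X(\log D)\otimes\sO_X(2mL)$. This works: nefness of the exterior square of a nef bundle is standard (it is a quotient of $\mathcal E_m^{\otimes 2}$, though this is not covered by Lemma \ref{lem:nefVB} as stated), and since $\wedge^2\mathcal E_m\simeq\mathcal E_m^\vee\otimes\det\mathcal E_m$ one computes $s_3(\wedge^2\mathcal E_m)=80\,m^2L^2\cdot D+O(m)>0$, so bigness holds as well; the cascade then has to strip $+2D$ rather than $+D$, which costs one more Serre vanishing on $D$. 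Your route buys independence from the EV vanishing theorem at the price of a second Segre-class computation and the Schur-positivity input; the paper's route is shorter and needs less of the hypotheses. The remaining pieces ($H^0$ and $H^3$ in both regimes via Proposition \ref{pro:2.3new}, Serre duality and Theorem \ref{thm:CP11} applied to the rank-one quotient of $\Omega^1_X$) are correct and in fact make explicit an argument the paper leaves implicit in the phrase ``by Proposition \ref{pro:2.3new} and by Serre duality''. The extra dissection of $\Omega^1_X(\log D)|_D$ by conormal sequences in your second paragraph is unnecessary — $D$ is a fixed surface and $L|_D$ is ample, so Serre vanishing applies directly to the fixed coherent sheaf $\Omega^1_X(\log D)|_D\otimes\sO_D(D)$ — but it is harmless.
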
 

We note here that if for each curve $C$ in $X$ the sheaf $\Omega^1_X(\log D) \otimes \OO_X(mL)|_C$ is nef or the sheaf $\Omega^1_X(\log G) \otimes \OO_X(mL)|_C$ is nef, then (i) holds; see Step 3 of the proof of Proposition \ref{nef}.

\medskip

Most of this section is devoted to the proof of Theorem \ref{ample}. In order to prove Theorem \ref{ample},  we argue by contradiction and assume that $L$ is not semiample, hence that $\kappa(X,L) = - \infty$. 

A version of the following proposition was asserted in \cite[Theorem 2.3]{Wi94}, but we could not follow the first lines of the proof. 

\begin{pro} \label{pro:H2vanishing} 
Let $X$ be a Calabi-Yau threefold and let $L$ be a nef divisor on $X$ which is not semiample. Let $G$ be a smooth ample divisor and $X$. Assume that there exists a very ample divisor $H$ and a positive integer $m_1$ such that for general $D \in | H |$ (so that $G+D$ has simple normal crossings) the following holds:
\begin{enumerate} 
\item[(i)] the locally free sheaf $\Omega^1_X\big(\log(D+G)\big) \otimes \mathcal O_X(mL)$ is nef for $m \geq m_1$,  
\item[(ii)] the divisor $L|_{D+G}$ is ample. 
\end{enumerate} 
Then
$$H^2\big(X,\Omega^1_X \otimes  \OO_X(m L)\big) = 0\quad\text{for }m \gg 0.$$ 
\end{pro}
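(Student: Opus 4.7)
The plan is to use the residue sequence to reduce the desired vanishing to one for $\Omega^1_X(\log D) \otimes \OO_X(mL)$, and then to establish that reduced vanishing by applying Lemma \ref{van} twice: first on $X$, and then on the surface $D$ to cancel the extra $\OO_X(D)$ factor coming from $\det \Omega^1_X(\log D)$.

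First, I tensor the residue sequence \eqref{eq:residue1} with $\OO_X(mL)$ to obtain
$$0 \to \Omega^1_X \otimes \OO_X(mL) \to \Omega^1_X(\log D) \otimes \OO_X(mL) \to \OO_D(mL) \to 0.$$
Since $L|_D$ is ample, Serre vanishing yields $H^i(D, \OO_D(mL)) = 0$ for $i \geq 1$ and $m \gg 0$, so it suffices to prove
$$H^2\big(X, \Omega^1_X(\log D) \otimes \OO_X(mL)\big) = 0 \quad \text{for } m \gg 0. \qquad(\ast)$$

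For the first application of Lemma \ref{van}, set $\mathcal E_m := \Omega^1_X(\log D) \otimes \OO_X(mL)$. By hypothesis $\mathcal E_m$ is nef for $m \geq m_1$, and by Proposition \ref{prop1}(vi) together with $L^2 \cdot D = (L|_D)^2 > 0$, its top Segre class $s_3(\mathcal E_m) = 10m^2 L^2 \cdot D - D \cdot c_2(X) - c_3(X)$ is positive for $m \gg 0$; hence $\mathcal E_m$ is nef and big. Using $\omega_X = \OO_X$ and $\det \mathcal E_m = \OO_X(3mL + D)$, Lemma \ref{van} with the nef twist $\mathcal M = \OO_X(aL)$, $a \geq 0$, produces
$$H^q\big(X, \Omega^1_X(\log D) \otimes \OO_X(NL + D)\big) = 0 \quad \text{for all } q \geq 1 \text{ and } N \gg 0.$$
To convert this $(+D)$-twisted vanishing into $(\ast)$, I tensor $0 \to \OO_X(-D) \to \OO_X \to \OO_D \to 0$ with $\Omega^1_X(\log D) \otimes \OO_X(mL + D)$ and take cohomology; the previous display then exhibits $H^2(X, \Omega^1_X(\log D) \otimes \OO_X(mL))$ as a quotient of $H^1\big(D, \Omega^1_X(\log D)|_D \otimes \OO_D(mL + D)\big)$.

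The main step is to kill this last $H^1$. For this I apply Lemma \ref{van} a second time, now on the surface $D$, to $\mathcal F_{m_2} := \Omega^1_X(\log D)|_D \otimes \OO_D(m_2 L)$. This bundle is nef on $D$ for $m_2 \geq m_1$ by restriction, and a Chern-class computation on $D$ analogous to Proposition \ref{prop1}(vi), together with $(L|_D)^2 > 0$, shows that its top Segre class has leading term $6m_2^2 (L|_D)^2$, hence is positive for $m_2 \gg 0$; therefore $\mathcal F_{m_2}$ is nef and big on $D$. Using the adjunction identities $K_D = D|_D$ and $\det \mathcal F_{m_2} = \OO_D(D + 3m_2 L)$, Lemma \ref{van} on $D$ with the twist $\mathcal M = \OO_D\big((m - 4m_2)L - D\big)$---which is ample for $m - 4m_2 \gg 0$, since both $L|_D$ and $D|_D$ are ample---produces
$$H^q\big(D, \Omega^1_X(\log D)|_D \otimes \OO_D(mL + D)\big) = 0 \quad \text{for } q \geq 1 \text{ and } m \gg 0.$$
Combining everything gives $(\ast)$, and hence the proposition.

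The main obstacle is the superfluous $\OO_X(D)$ factor that $\det \Omega^1_X(\log D) = \OO_X(D)$ forces into Lemma \ref{van}: a single application on $X$ produces only vanishing twisted by an additional $D$, and removing it requires repeating the argument on $D$. The second application succeeds precisely because $D$ is very ample and $L|_D$ is ample, which allows $(m - 4m_2)L|_D - D|_D$ to be made ample and used as the nef correction term in Lemma \ref{van}.
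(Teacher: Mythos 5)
Your proof is correct and follows essentially the same route as the paper's: the same two exact sequences reduce the statement to the two vanishings $H^2\big(X,\Omega^1_X(\log D)\otimes\OO_X(mL+D)\big)=0$ and $H^1\big(D,\Omega^1_X(\log D)|_D\otimes\OO_D(mL+D)\big)=0$, and the first is handled identically via $s_3(\mathcal E_{m_1})>0$ and Lemma \ref{van}. The only difference is in the second vanishing, where the paper simply invokes Serre vanishing on the surface $D$ (the sheaf $\Omega^1_X(\log D)|_D\otimes\OO_D(D)$ is fixed and $L|_D$ is ample), so your second application of Lemma \ref{van} on $D$, while correct, is unnecessary machinery.
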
 

\begin{proof} 
\emph{Step 1.}  
By our assumption, there exists a non-empty Zariski open set $B \subseteq | H |$ and a positive number $m_1$ such that 
\begin{equation}\label{eq:nef}
\E_{m}=\Omega^1_{X}\big(\log (D+G)\big) \otimes \OO_{X}(mL)\quad\text{is nef}
\end{equation}
for all $m \geq m_1$ and for all $D\in B$. Possibly further shrinking $B$,  we may furthermore assume that 
\begin{equation}\label{eq:20}
L|_{D+G}\quad\text{is ample for every }D\in B.
\end{equation}
We fix $D\in B$ and claim that
\begin{equation}\label{eq:15}
H^2\big(X, \Omega^1_X(\log(D+G)) \otimes \OO_X(\mu L)\big) = 0 \quad\text{for }\mu \gg 0.
\end{equation}
The claim immediately implies the proposition: indeed, tensoring the residue sequence \eqref{eq:residue1} with $\OO_X(\mu L)$ and taking cohomology gives the exact sequence
\begin{align*}
H^1\big(D, \OO_{D}(\mu L)\big) \oplus H^1(G,\OO_{G}(\mu L)\big) &\to  H^2\big(X, \Omega^1_X \otimes \OO_X(\mu L)\big) \\
&\to H^2\big(X, \Omega^1_X(\log(D+G)) \otimes \OO_X(\mu L)\big),
\end{align*}
hence it suffices to show
\begin{equation}\label{eq:155}
H^1\big(D, \OO_{D}(\mu L)\big) =   H^1\big(G, \OO_{G}(\mu L)\big) = 0.
\end{equation}
But $L|_D$ and $L|_G$ are ample by \eqref{eq:20}, hence \eqref{eq:155} follows by Serre vanishing as soon as $\mu$ is sufficiently large.

\medskip

\emph{Step 2.}
It remains to prove \eqref{eq:15}. Tensoring the standard exact sequence associated to $D+G$ with $\Omega^1_X\big(\log(D+G)\big) \otimes \OO_X(\mu L+D+G)$ and taking cohomology, we get the exact sequence
\begin{align*}
H^1\big(D+G&, \Omega^1_X(\log (D+G)) \otimes \sO_{D+G}(\mu L+D+G)\big)\\
& \to H^2\big(X,\Omega^1_X(\log (D+G)) \otimes \OO_X(\mu L)\big)\\
& \to H^2\big(X,\Omega^1_X(\log (D+G)) \otimes \sO_X(\mu L+D+G)\big).
\end{align*}
Hence, it suffices to show that for $\mu\gg0$ we have
\begin{equation}\label{eq:16}
H^1\big(D+G, \Omega^1_X(\log (D+G)) \otimes \sO_{D+G}(\mu L+D+G)\big) =0
\end{equation}
and
\begin{equation}\label{eq:17}
H^2\big(X,\Omega^1_X(\log (D+G)) \otimes \sO_X(\mu L+D+G)\big)=0.
\end{equation}
The equation \eqref{eq:16} follows from Serre vanishing. For \eqref{eq:17}, we may further assume that $m_1$ is so large, so that
\begin{equation}\label{eq:18}
10m^2L^2 \cdot (D+G) +5mL\cdot D\cdot G - (D+G) \cdot c_2(X) - c_3(X)>0.
\end{equation}
Denote $\mathcal E = \Omega^1_X\big(\log (D+G)\big) \otimes \OO_X(m_1L)$. By \cite[Chapter 3]{Ful98}, by Proposition \ref{prop1}(vi) and by \eqref{eq:18}, we have
\begin{align*}
c_1\big(\sO_{\bP(\mathcal E)}(1)\big)^5 = s_3(\mathcal E)> 0,
\end{align*}
and therefore, by \eqref{eq:nef} the line bundle $\sO_{\bP(\mathcal E)}(1)$ is nef and big on $\bP(\mathcal E)$. Noticing that $\det \mathcal E=\OO_X(D+G+3m_1L)$, we have
$$ \Omega^1_X\big(\log (D+G)\big) \otimes \sO_X(\mu L+D+G)  =  \mathcal E\otimes \det \mathcal E \otimes \OO_X\big((\mu-4m_1)L\big),$$
and \eqref{eq:17} follows by Lemma \ref{van}. This finishes the proof.
\end{proof} 

\begin{pro}  \label{pro:H1vanishing} 
Let $X$ be a Calabi-Yau threefold and let $L$ be a nef divisor on $X$. Assume that there exists a smooth very ample divisor $D$ such that the divisor $L|_D $ is ample. Then
$$ H^1\big(X,\Omega^1_X \otimes \OO_X(-m L)\big) = H^2\big(X,\Omega^2_X \otimes \OO_X(m L)\big) = 0\quad\text{for }m \gg 0.$$
\end{pro}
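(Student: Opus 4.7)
The two stated vanishings are Serre dual to each other (via $\omega_X\simeq\OO_X$ and $(\Omega^1_X)^\vee\simeq\Omega^2_X$), so it suffices to prove
$$H^1\big(X,\Omega^1_X\otimes\OO_X(-mL)\big)=0\quad\text{for }m\gg 0.$$

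The guiding observation is that, although $L$ is merely nef, the divisor $D+mL$ is \emph{ample} on $X$ for every $m\geq 0$, because $D$ is ample and $L$ is nef. Akizuki-Kodaira-Nakano on the threefold (with $p=q=2$, so $p+q=4>3=\dim X$) will therefore give $H^2\big(X,\Omega^2_X(D+mL)\big)=0$, which by Serre duality on $X$ becomes
$$H^1\big(X,\Omega^1_X(-D-mL)\big)=0\quad\text{for all }m\geq 0.$$

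To bridge the gap to $H^1\big(X,\Omega^1_X(-mL)\big)=0$, the plan is to tensor the structure sequence $0\to\OO_X(-D)\to\OO_X\to\OO_D\to 0$ with $\Omega^1_X\otimes\OO_X(-mL)$ and read off the long exact sequence; everything then reduces to establishing
$$H^1\big(D,\Omega^1_X|_D\otimes\OO_D(-mL|_D)\big)=0\quad\text{for }m\gg 0.$$
For this, the conormal sequence $0\to\OO_D(-D|_D)\to\Omega^1_X|_D\to\Omega^1_D\to 0$ twisted by $\OO_D(-mL|_D)$ sandwiches the desired group between two outer $H^1$ terms. The first, $H^1\big(D,\OO_D(-D|_D-mL|_D)\big)$, is Serre dual to $H^1\big(D,\omega_D\otimes\OO_D(D|_D+mL|_D)\big)$ and vanishes by Kodaira on the surface $D$, since $(D+mL)|_D$ is ample (a sum of the ample $D|_D$ and the ample $mL|_D$). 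The second, $H^1\big(D,\Omega^1_D(-mL|_D)\big)$, is Serre dual to $H^1\big(D,\Omega^1_D(mL|_D)\big)$ via the identification $T_D\otimes\omega_D\simeq\Omega^1_D$, and vanishes for $m\gg 0$ by Serre vanishing applied to the ample $L|_D$.

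There is no genuine obstacle here. The only point requiring care is the bookkeeping that arranges for $L$'s lack of ampleness on $X$ to be absorbed into the very ample divisor $D$, with the loss incurred upon restriction to $D$ being controlled by the hypothesis that $L|_D$ is ample.
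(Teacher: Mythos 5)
Your argument is correct, and it reaches the same two target groups on $D$ and on $X$ that the paper does, but via a different decomposition. The paper tensors the single residue sequence $0 \to \Omega^1_X(\log D)(-D) \to \Omega^1_X \to \Omega^1_D \to 0$ with $\OO_X(-mL)$ and kills the left-hand $H^1$ in one stroke using the logarithmic Akizuki--Nakano vanishing theorem of Esnault--Viehweg (\cite[Corollary 6.4]{EV92}), applied to $\Omega^1_X(\log D)\otimes\OO_X(-D-mL)$ with $D+mL$ ample; the right-hand term $H^1\big(D,\Omega^1_D\otimes\OO_D(-mL)\big)$ is then handled exactly as you do, by Serre duality and Serre vanishing on $D$. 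You instead split this one step into two: the restriction sequence for $\OO_X(-D)$ reduces the problem to $H^1\big(X,\Omega^1_X(-D-mL)\big)$ (killed by classical Akizuki--Kodaira--Nakano, since $p+q=2<3$) and to $H^1\big(D,\Omega^1_X|_D(-mL)\big)$, which the conormal sequence sandwiches between $H^1\big(D,\OO_D(-(D+mL)|_D)\big)$ (Kodaira on the surface $D$) and the same $H^1\big(D,\Omega^1_D(-mL)\big)$ as in the paper. Since $\Omega^1_X(\log D)(-D)$ is itself an extension of $\OO_D(-D)$ by $\Omega^1_X(-D)$, your two sequences are precisely a two-step filtration of the paper's single one. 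What your route buys is that it only invokes the classical Nakano and Kodaira vanishing theorems rather than their logarithmic generalization; what it costs is one extra exact sequence and one extra vanishing to check. Both are complete proofs.
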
 

\begin{proof} 
Tensoring the residue sequence \eqref{eq:residue3} associated to $\Omega_X^1(\log D)$ with $\sO_X(-m L)$, and taking cohomology, we obtain the exact sequence 
\begin{align}
H^1\big(X,  \Omega^1_X(\log D) \otimes \sO_X(-D-m L)\big) &\to H^1\big(X, \Omega^1_X \otimes \sO_X(-m L)\big)\label{eq:align}\\
&\to H^1\big(D,\Omega_D^1 \otimes \sO_D(-m  L)\big).\notag
\end{align}
Now, since $D+m L$ is ample, we have 
$$H^1\big(X,  \Omega^1_X(\log D) \otimes \sO_X(-D-m L)\big)=0$$
by \cite[Corollary 6.4]{EV92}, and as $L|_D$ is ample by assumption, by Serre duality and Serre vanishing we have
$$H^1\big(D,\Omega_D^1 \otimes \sO_D(-m  L)\big)\simeq H^1\big(D,\Omega_D^1 \otimes \sO_D(m  L)\big)=0 \quad\text{for }m \gg 0.$$
Therefore, \eqref{eq:align} gives $H^1\big(X,\Omega^1_X \otimes \OO_X(-m L)\big) = 0$, which together with Serre duality proves our assertion. 
\end{proof} 

\begin{proof}[Proof of Theorem \ref{ample}]
Assume that $L$ is not semiample, and in particular, that $\kappa(X,L)=-\infty$. By Proposition \ref{pro:2.3new} and by Serre duality, there exists a positive integer $m_0$ such that for all integers $m$ with $|m| \geq m_0$ we have 
$$h^0\big(X,\Omega^1_X \otimes \OO_X(mL)\big)=h^3\big(X,\Omega^1_X \otimes \OO_X(mL)\big)=0.$$ 
Therefore, by Propositions \ref{prop1}(iv) and  \ref{pro:H2vanishing}, for $m \gg 0$ we have
$$- \frac{1}{2}c_3(X)=\chi\big(X,\Omega^1_X \otimes \OO_X(mL)\big) = -h^1\big(X,\Omega^1_X \otimes \OO_X(mL)\big)\leq0, $$
and by Propositions \ref{prop1}(iv) and \ref{pro:H1vanishing}, for $m\gg0$ we have 
$$- \frac{1}{2}c_3(X)=\chi\big(X,\Omega^1_X \otimes \OO_X(-mL)\big) = h^2\big(X,\Omega^1_X \otimes \OO_X(-mL)\big)\geq0.$$
In total, we obtain $c_3(X) = 0,$ contradicting our assumption. This finishes the proof. 
\end{proof}  

\section{Proof of the Main Theorem} \label{sec:nu=2}

In this section we prove Theorem \ref{MT2}: 

\begin{thm}  \label{thm:nu=2} 
Let $X$ be a Calabi-Yau threefold with $\rho(X) = 2$ and let $L$ be a nef Cartier divisor on $X$.  Suppose that $\nu(X,L)=2$ and that $c_3(X) \ne 0$. Then $L$ is semiample.
\end{thm}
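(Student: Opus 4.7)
The plan is to derive Theorem \ref{thm:nu=2} as a direct application of the semiampleness criterion Theorem \ref{ample}. I argue by contradiction: assume that $L$ is not semiample, so $\kappa(X,L)=-\infty$ and all results from Section \ref{sec:prelim} are in force. It then suffices to exhibit a very ample divisor $H$ on $X$ and a positive integer $m_1$ such that, for a general $D\in |H|$, the two hypotheses (i), (ii) of Theorem \ref{ample} are satisfied; Theorem \ref{ample} will then force $L$ to be semiample, contradicting the assumption.

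Hypothesis (ii), that $L|_D$ be ample, is handled directly by Lemma \ref{generic}(b): take $H$ to be any basepoint-free ample divisor (say a large multiple of a given polarisation) and $D\in|H|$ very general, so that $L|_D$ is ample by the Nakai-Moishezon criterion using $L^2\cdot D>0$ from Lemma \ref{lem:surface}. The substance of the proof is verifying hypothesis (i), namely the nefness of
$$\mathcal{E}_m := \Omega^1_X(\log D)\otimes \OO_X(mL) \quad\text{for all } m\geq m_1.$$
I check nefness by restricting $\mathcal{E}_m$ to irreducible curves $C\subseteq X$ and split into two cases according to whether $L\cdot C=0$ or $L\cdot C>0$.

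In the first case, Lemma \ref{generic}(a) tells us that the curves with $L\cdot C=0$ form a countable family $(C_i)_{i\in\N}$. Since $K_X\equiv 0$, Lemma \ref{lem:HN} applied to each $C_i$ and to the ample line bundle $\OO_X(H)$ yields, for every sufficiently large $m$, a Zariski dense open subset $B_i\subseteq |H^{\otimes m}|$ such that $\Omega^1_X(\log D)|_{C_i}$ is nef whenever $D\in B_i$. Replacing $H$ by a suitable multiple and choosing $D$ very general in $|H|$, we may arrange $D\in\bigcap_i B_i$ (countable intersection of dense opens in the irreducible linear system), which makes $\Omega^1_X(\log D)|_{C_i}$ and hence $\mathcal{E}_m|_{C_i}$ nef simultaneously for every $i$ and every $m$. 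In the second case, $L\cdot C\geq 1$, and here the assumption $\rho(X)=2$ enters in an essential way: since $N^1(X)_\R$ is two-dimensional with basis $L,H$, the numerical class of any such $C$ in $N_1(X)_\R$ is determined up to scale by the ratio $H\cdot C : L\cdot C$. This structural constraint should deliver a uniform upper bound, linear in $H\cdot C$, on the slopes of the Harder-Narasimhan quotients of $\Omega^1_X(\log D)|_C$, so that for $m_1$ sufficiently large the positive twist $mL|_C$ dominates the negativity uniformly across all such $C$, yielding nefness of $\mathcal{E}_m|_C$.

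Combining both cases, $\mathcal{E}_m|_C$ is nef on every irreducible curve, hence $\mathcal{E}_m$ itself is nef; applying Theorem \ref{ample} finishes the proof. The main obstacle is clearly the second case: producing a \emph{uniform} bound on the negativity of $\Omega^1_X(\log D)|_C$ across the infinite family of curves with $L\cdot C>0$. I expect this is the content of the Proposition \ref{nef} alluded to in the introduction, whose proof must leverage $\rho(X)=2$ crucially, perhaps in combination with the pseudoeffectivity statement of Theorem \ref{thm:CP11} to control the slope of the maximal destabilising subsheaf of $\Omega^1_X(\log D)|_C$ simultaneously for all relevant $C$.
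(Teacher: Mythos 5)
Your overall skeleton is the right one and matches the paper: reduce to Theorem \ref{ample}, get hypothesis (ii) from Lemma \ref{generic} and Lemma \ref{lem:surface}, and concentrate on the nefness of $\Omega^1_X(\log D)\otimes\OO_X(mL)$. But that nefness \emph{is} the theorem --- it is the content of Proposition \ref{nef}, which you explicitly defer (``I expect this is the content of the Proposition \ref{nef}'') --- and the mechanism you sketch for it does not work. In your second case ($L\cdot C>0$) you propose a bound on the negativity of the Harder--Narasimhan quotients of $\Omega^1_X(\log D)|_C$ that is \emph{linear in $H\cdot C$}; but the twist you have available is $mL|_C$, not $mH|_C$, and on a Calabi--Yau threefold there can be curves (e.g.\ non-free rational curves) with $L\cdot C=1$ while $H\cdot C$ and the negativity of quotients of $\Omega^1_X|_C$ grow without bound. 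A bound linear in $H\cdot C$ therefore cannot be absorbed by $mL$ for any fixed $m$; what you would need is a bound in terms of $L\cdot C$, and nothing in the proposal produces one. Your first case ($L\cdot C=0$) also has a gap: Lemma \ref{lem:HN} yields a threshold $m_0(C_i)$ that depends on the curve (through the destabilising quotient of $\nu^*(\Omega^1_X|_{C_i})$ and the integer $N$ chosen there), and over the countably many $L$-trivial curves these thresholds need not be bounded, so no single linear system $|H^{\otimes m}|$ and very general $D$ can be chosen to work for all of them simultaneously. The paper applies Lemma \ref{lem:HN} only to \emph{finitely many} curves, and only at the very last step.

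The paper's actual argument is structured quite differently and avoids any curve-by-curve slope estimate. It works with the universal divisor $\mathcal D\subseteq X\times B$ over $B\subseteq|H|$ and the relative sheaf $\mathcal E_m=\Omega^1_{\mathcal X/B}(\log\mathcal D)\otimes\OO_{\mathcal X}(m\mathcal L)$; Riemann--Roch plus the vanishing results of Section 3 give $h^0(X_b,\mathcal E_{m,b})\sim\frac12 m^2L^2\cdot D$ (Lemma \ref{RR}), and the heart of the proof (Propositions \ref{pro:min}, \ref{lem:boundsoncoefficients}, \ref{prop:globgen}) shows that the saturation of the image of the evaluation map $\pi^*\pi_*\mathcal E_m\to\mathcal E_m$ must have full rank $3$ --- the ranks $1$ and $2$ are excluded by comparing leading coefficients of $h^0$, using Theorem \ref{thm:CP11} and the divisor $A$ minimising $A\cdot c_2(X)$, and this is precisely where $\rho(X)=2$ enters. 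This gives generic global generation, hence nefness of $\mathcal E_m$ off a proper algebraic subset; the divisorial part of that subset is removed using that $L$ is big on every surface (Lemma \ref{lem:surface} and Kodaira's trick), leaving finitely many $L$-trivial curves to which Lemma \ref{lem:HN} is finally applied. None of this machinery appears in your proposal, so as written the proof is incomplete at its central step.
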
 

By Theorem \ref{ample} and Lemma \ref{generic}(b), it suffices to verify the condition (i) in Theorem \ref{ample}.
 
\begin{nt}\label{notation:1}
Let $D$ be a smooth very ample divisor on $X$. 

Denote by $B$ a non-empty Zariski open affine subset of the linear system $\PS\big(H^0(X,\OO_X(D))\big)$ contained in the locus of smooth elements of that linear system. Set $\mathcal{X}=X\times B$, let $\pi\colon \mathcal X\to B$ be the projection map and let $\mathcal L$ be the pullback of $L$ by the projection $\mathcal X\to X$. Let $\mathcal D\subseteq\mathcal X$ be the universal family of divisors parametrised by $B$. Note that $\mathcal D$ is a smooth divisor in $\mathcal X$. We consider the relative logarithmic cotangent sheaf 
$$\Omega^1_{\mathcal X/B}(\log\mathcal D)$$ 
with log poles along $\mathcal D$, which is a locally free sheaf of rank $3$ on $\mathcal X$. Denote by 
$$T_{\mathcal X/B}(-\log\mathcal D)$$ 
its dual. For every point $b\in B$, denote $X_b=\pi^{-1}(b)$, $D_b=\mathcal D\cap X_b$ and $L_b=\mathcal L|_{X_b}$. Note that $X_b = X$, $L_b = L$ and  
$$\Omega^1_{\mathcal X/B}(\log\mathcal D)|_{X_b}=\Omega^1_{X_b}(\log D_b)\quad\text{and}\quad T_{\mathcal X/B}({-}\log\mathcal D)|_{X_b}=T_{X_b}({-}\log D_b).$$ 
For each positive integer $m$, denote
$$\mathcal E_m=\Omega^1_{\mathcal X/B}(\log\mathcal D)\otimes\OO_\mathcal X(m\mathcal L)\quad\text{and}\quad \mathcal E_{m,b}=\mathcal E_m|_{X_b}.$$
In the remainder of the section, we freely shrink $B$ if necessary. 
\end{nt}

\begin{lem} \label{RR} 
Let $X$ be a Calabi-Yau threefold with $\rho(X) = 2$ and let $L$ be a nef Cartier divisor on $X$ with $\nu(X,L)=2$ which is not semiample. Assuming Notation \ref{notation:1}, the following holds:
\begin{enumerate}
\item[(a)] there exist a positive integer $m_0$ and a positive constant $C$, such that for every $b\in B$  and for all $m\geq m_0$, we have 
$$ h^0(X_b,\mathcal E_{m,b}) \geq Cm^2,$$
\item[(b)] for a general point $b\in B$ we have 
$$ h^0(X_b,\mathcal E_{m,b})=\frac{1}{2}m^2L^2\cdot D+O(m)\quad\text{and}\quad h^1(X_b,\mathcal E_{m,b})=O(m).$$
\end{enumerate}
\end{lem}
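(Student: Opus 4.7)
The plan is to establish (b) by a cohomology chase on the residue sequence \eqref{eq:residue1} for $D_b$, and then to deduce (a) from (b) by upper semicontinuity of cohomology along $\pi\colon\mathcal X\to B$.

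For (b), I would fix a general $b\in B$ so that $L|_{D_b}$ is ample by Lemma \ref{generic}(b). Tensoring \eqref{eq:residue1} with $\OO_X(mL)$ gives
\[0\to\Omega^1_X\otimes\OO_X(mL)\to\mathcal E_{m,b}\to\OO_{D_b}(mL)\to 0.\]
For $m\gg 0$, Proposition \ref{pro:2.3new} supplies $H^0(X,\Omega^1_X\otimes\OO_X(mL))=0$, and Serre vanishing on the surface $D_b$ (for which ampleness of $L|_{D_b}$ is essential) supplies $H^1(D_b,\OO_{D_b}(mL))=0$. The associated long exact sequence therefore truncates to
\[0\to H^0(X,\mathcal E_{m,b})\to H^0(D_b,\OO_{D_b}(mL))\to H^1(X,\Omega^1_X\otimes\OO_X(mL))\to H^1(X,\mathcal E_{m,b})\to 0,\]
which immediately gives $h^1(X,\mathcal E_{m,b})\leq h^1(X,\Omega^1_X\otimes\OO_X(mL))=O(m)$ by Corollary \ref{cor:growth order}. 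Theorem \ref{thm:fujita} supplies the a priori bounds $h^2(X,\mathcal E_{m,b})=O(m)$ and $h^3(X,\mathcal E_{m,b})=O(1)$, while Proposition \ref{prop1}(v) gives $\chi(X,\mathcal E_{m,b})=\tfrac12m^2L^2\cdot D+O(m)$. Plugging these into $h^0=\chi+h^1-h^2+h^3$ yields $h^0(X,\mathcal E_{m,b})=\tfrac12m^2L^2\cdot D+O(m)$, completing (b).

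For (a), the family $\pi\colon\mathcal X\to B$ is smooth projective and $\mathcal E_m$ is locally free on $\mathcal X$, hence flat over $B$, so by upper semicontinuity the function $b\mapsto h^0(X_b,\mathcal E_{m,b})$ attains its minimum $g(m)$ on a non-empty Zariski open $U_m\subseteq B$, and $h^0(X_b,\mathcal E_{m,b})\geq g(m)$ for every $b\in B$. Since (b) holds on a very general subset of $B$ that necessarily meets $U_m$, we obtain $g(m)=\tfrac12m^2L^2\cdot D+O(m)$. Because $L^2\cdot D>0$ by Lemma \ref{lem:surface}, for $m$ sufficiently large we have $g(m)\geq Cm^2$ with $C=\tfrac14L^2\cdot D$, which delivers the uniform lower bound of (a).

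The main technical hurdle is producing the four-term exact sequence in (b): its availability relies simultaneously on the vanishing of Proposition \ref{pro:2.3new}, whose proof invokes Theorem \ref{thm:CP11} together with the hypothesis $\kappa(X,L)=-\infty$, and on the ampleness of $L|_{D_b}$, which is the content of Lemma \ref{generic}(b) and is where the assumption $\rho(X)=2$ enters. Once these ingredients are in place, the rest is formal bookkeeping combining the Fujita-type bounds with Riemann-Roch, and (a) falls out of (b) with no further geometric input.
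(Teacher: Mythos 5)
Your proof of (b) is essentially the paper's: the same four-term exact sequence obtained from the residue sequence after killing $H^0(X,\Omega^1_X\otimes\OO_X(mL))$ via Proposition \ref{pro:2.3new} and $H^1(D_b,\OO_{D_b}(mL_b))$ via Serre vanishing, the only cosmetic difference being that the paper reads $h^0(X_b,\sE_{m,b})$ off that sequence together with Riemann--Roch on the surface $D_b$, while you recover it from $\chi+h^1-h^2+h^3$ using Proposition \ref{prop1}(v) and Theorem \ref{thm:fujita}; both are fine. For (a) you deduce the uniform bound from (b) by semicontinuity, whereas the paper proves (a) independently of (b): it bounds $h^0\geq\chi-h^2$ using the $b$-independent Euler characteristic from Proposition \ref{prop1}(v) and a bound $h^2(X_b,\sE_{m,b})\leq C_1m$ valid on the countable intersection $U=\bigcap_m U_m$ of the open loci where $h^2$ is no larger than at a fixed reference point. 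Your route works, but as written it has a uniformity wrinkle: for each $m$ you choose a point of $U_m$ meeting the very general locus where (b) holds, and the implied constant in the $O(m)$ of (b) depends on the chosen point --- for instance the Serre-vanishing threshold for $H^1(D_b,\OO_{D_b}(mL_b))$ is not obviously uniform in $b$ --- so letting the point vary with $m$ does not immediately give $g(m)\geq Cm^2$. The fix is exactly the paper's device: over $\C$ the countable intersection $\bigcap_m U_m$ is dense and meets the very general locus of Lemma \ref{generic}(b), so one fixes a single $b_0$ there and runs the estimate of (b) at that one point for all $m$, making the $O(m)$ constant uniform. With that one sentence added, your argument is complete.
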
 

\begin{proof}
For (a), Proposition \ref{prop1}(v) gives 
$$ \chi(X_b,\mathcal E_{m,b})  = \frac{1}{2}m^2L^2 \cdot D + O(m), $$
where $O(m)$ does not depend on $b$. Fix $b_0\in B$, and for each positive integer $m$, let 
$$U_m=\big\{b\in B\mid h^2(X_b,\mathcal E_{m,b})\leq h^2(X_{b_0},\mathcal E_{m,b_0})\big\}.$$
These sets are Zariski open in $B$ by upper-semicontinuity; denote 
$$U=\bigcap\limits_{m\geq1}U_m.$$
Since
$$ h^2(X_{b_0,}\mathcal E_{m,b_0} )=O(m)$$
by Theorem \ref{thm:fujita}, there exists a constant $C_1>0$ such that
$$h^2(X_b,\mathcal E_{m,b})\leq C_1m\quad\text{for all $m$ and for all }b\in U.$$
Therefore, there is a constant $C>0$ and a positive integer $m_0$ such that for all $m\geq m_0$ and all $b\in U$ we have
$$h^0(X_b,\mathcal E_{m,b})\geq \frac{1}{2}m^2L^2 \cdot D + O(m)-C_1m\geq Cm^2.$$
We conclude by upper-semicontinuity, since $U$ is dense in $B$.

For (b), we have $h^0\big(X_b,\Omega^1_{X_b} \otimes \OO_{X_b}(mL_b)\big) = 0 $ by Proposition \ref{pro:2.3new} and $ h^1\big(X_b,\Omega^1_{X_b} \otimes \OO_{X_b}(mL_b)\big) =O(m)$ by Corollary \ref{cor:growth order}. Tensoring the residue sequence \eqref{eq:residue2} associated to $D_b$ by $\OO_{X_b}(mL_b)$ and taking the long cohomology sequence, since $L_b|_{D_b}$ is ample by Lemma \ref{generic}(b), Serre vanishing gives the exact sequence
\begin{align*}
0&\to H^0(X_b,\sE_{m,b})\to H^0\big(D_b,\OO_{D_b}(mL_b)\big)\\
&\to H^1\big(X_b,\Omega^1_{X_b} \otimes \OO_{X_b}(mL_b)\big)\to H^1(X_b,\sE_{m,b})\to0.
\end{align*}
This immediately implies $h^1(X_b,\mathcal E_{m,b})=O(m)$ and
$$ h^0(X_b,\sE_{m,b}) = h^0\big(D_b, \OO_{D_b}(mL)\big) + O(m). $$ 
Riemann-Roch and Serre vanishing give 
$$ h^0\big(D_b,\OO_{D_b}(mL_b)\big) = \chi\big(D_b,\OO_{D_b}(mL_b)\big) = \frac{1}{2} m^2 (L_b|_{D_b})^2 + O(m), $$
and (b) follows from the last two equations.
\end{proof}

\begin{pro} \label{pro:min}  
Let $X$ be a Calabi-Yau threefold with $\rho(X) = 2$ and let $L$ be a nef Cartier divisor on $X$ with $\nu(X,L)=2$  which is not semiample.  Let $\mathcal A$ be the set of all ample prime divisors on $X$, and let $A\in\mathcal A$ be an element such that $A \cdot c_2(X)$ is minimal. If $M$ is an integral divisor such that $M \sim_{\mathbb Q} aA+bL$ with $a > 0$, then $a \geq 1$. 
\end{pro}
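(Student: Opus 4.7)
The plan is to argue by contradiction: suppose there is an integral divisor $M = aA + bL$ with $0 < a < 1$. First I would show that for $N \gg 0$ the divisor $M + NL = aA + (b+N)L$ is both ample and effective. Ampleness is automatic because $aA$ is a $\mathbb{Q}$-ample divisor and $L$ is nef, so $M+NL$ lies in the interior of $\Nef(X)$ once $N$ is large. For effectiveness, Kodaira vanishing together with $K_X \equiv 0$ yields $H^i(X, M+NL) = 0$ for $i \geq 1$, and a direct Riemann-Roch computation gives
\[(M+NL)^3 = 3a N^2\, A \cdot L^2 + O(N),\]
which tends to $+\infty$ in view of the positivity $A \cdot L^2 > 0$ from Lemma \ref{lem:surface}. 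Hence $h^0(X, M+NL) = \chi(X, M+NL)$ grows without bound and we can pick an effective $D' \sim M + NL$.

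Next I would write $D' = \sum_i n_i D_i$ as a sum of prime components. By Lemma \ref{lem1} applied with the ample divisor $A$, every prime divisor $D_i$ satisfies $D_i \equiv a_i A + b_i L$ with $a_i > 0$. Equating numerical classes in the basis $(A, L)$ of $N^1(X)_\mathbb{Q}$ gives
\[\sum_i n_i a_i = a \quad \text{and} \quad \sum_i n_i b_i = b + N.\]
The crucial step is to rule out any ample $D_i$: since $L \cdot c_2(X) = 0$ by Proposition \ref{prop1}(ii) and $A \cdot c_2(X) > 0$ by Lemma \ref{lem1}, an ample prime divisor $D_i$ would by minimality of $A \cdot c_2(X)$ satisfy $D_i \cdot c_2(X) = a_i\, A \cdot c_2(X) \geq A \cdot c_2(X)$, forcing $a_i \geq 1$ and thereby contradicting $\sum_j n_j a_j = a < 1$.

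Thus every $D_i$ is non-ample. Since $\rho(X) = 2$ and $A$ lies in the interior of $\Nef(X)$, the second boundary ray of $\Nef(X)$ (other than $\mathbb{R}_{\geq 0} L$) can be normalized to be spanned by a class $R = A + cL$, where the requirement that $A$ be interior forces $c < 0$; any effective class with positive $A$-coefficient that fails to be ample therefore satisfies $b_i \leq c a_i$. Summing over $i$ then yields
\[b + N = \sum_i n_i b_i \leq c \sum_i n_i a_i = ca < 0,\]
a fixed upper bound independent of $N$, contradicting our freedom to choose $N$ arbitrarily large. The most delicate step is guaranteeing effectiveness of $M+NL$, which relies essentially on the Calabi-Yau geometry (Kodaira vanishing and the positivity $A \cdot L^2 > 0$); the remainder of the argument is purely convex-geometric once the structure of the effective and nef cones in Picard number $2$ is recorded.
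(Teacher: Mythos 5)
Your argument is correct, and it shares the paper's basic strategy --- produce an effective ample representative of a class of the form $aA + (\text{something})L$, decompose it into prime components, and play the minimality of $A\cdot c_2(X)$ against the fact that $L\cdot c_2(X)=0$ and that every prime divisor has positive $A$-coefficient (the intermediate fact in the proof of Lemma \ref{lem1}) --- but the execution differs at both ends. For effectivity, the paper normalizes $b$ into $[0,1)$ so that $M$ itself is ample, and gets $h^0(X,M)>0$ from the single inequality $\chi(X,M)=\frac16 M^3+\frac1{12}M\cdot c_2(X)>0$ via Proposition \ref{pro:Miyaoka} and Kodaira vanishing; you instead add $NL$ for $N\gg0$ and use the quadratic growth of $\chi$ coming from $A\cdot L^2>0$, which is also fine (and uses Lemma \ref{lem:surface} where the paper does not need it). The endgame is inverted: the paper shows that at least one prime component \emph{must} be ample (a weighted-mediant argument in the two-dimensional nef cone, using that $L$ spans a boundary ray of the pseudoeffective cone) and that this component has $c_2$-degree strictly smaller than $A\cdot c_2(X)$, contradicting minimality directly; you show that \emph{no} component can be ample (minimality forces $a_i\geq1$ against $\sum n_ia_i=a<1$) and then derive the contradiction from the explicit description of the non-ample region, $b_i\leq ca_i$ with $c<0$, which caps $b+N$ and clashes with $N\to\infty$. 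Both routes rest on the same cone geometry in Picard number $2$; yours trades the paper's one-line mediant observation for a slightly longer but equally elementary bookkeeping with the second boundary ray, at the cost of invoking one extra input ($A\cdot L^2>0$).
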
 

\begin{proof} 
Assume $0 < a < 1$. Setting 
$$M_0 := M-\lfloor b\rfloor L,$$
it suffices to prove the claim for $M_0$. Hence, replacing $M$ by $M_0$, we may assume from the beginning that $0 \leq b < 1$. Then $M$ is ample as $A$ is ample and $L$ is nef.  By \eqref{eq:RR2}, we have
$$\chi(X,M)=\frac{1}{6}M^3+\frac{1}{12}M\cdot c_2(X).$$
Since $M$ is ample, by Proposition \ref{pro:Miyaoka} we have $\chi(X,M)>0$, and hence $h^0(X,M) > 0$ by Kodaira vanishing. Pick $E \in |M|$. Since $L\cdot c_2(X)=0$ by Proposition \ref{prop1}(ii), we have
$$ E \cdot c_2(X) = (aA+bL) \cdot c_2(X) < A \cdot c_2(X),$$
hence $E\notin\mathcal A$ by the choice of $A$. Write $E=\sum a_i E_i$, where $a_i$ are positive integers and $E_i$ are prime divisors. By Lemma \ref{lem1} we have $E_i \cdot c_2(X) > 0$ for all $i$, hence 
$$E_i \cdot c_2(X) \leq E \cdot c_2(X) < A \cdot c_2(X). $$ 
However, since $\rho(X) = 2$ and since $L$ lies on the boundary of the pseudoeffective cone, at least one $E_{i_0}$ must be ample. This contradicts the choice of $A$. 
\end{proof} 

\begin{lem}\label{lem:boundsoncoefficients}
Let $X$ be a Calabi-Yau threefold with $\rho(X) = 2$ and let $L$ be a nef Cartier divisor on $X$ with $\nu(X,L)=2$ which is not semiample. Assume that
$$ H^0\big(X,\Omega^q_X \otimes \OO_X(kL)\big)=0\quad\text{for all }q\text{ and all }k.$$
Assume Notation \ref{notation:1}, and assume additionally that 
$$D\sim_\Q \alpha A+\beta L,$$ 
where $A$ is as in Proposition \ref{pro:min} and $\alpha,\beta\in\Q_{>0}$. Let $S\sim aD+bL$ be an effective divisor on $X$ such that 
$$\OO_X(S) \subseteq \Omega^r_X(\log D) \otimes \OO_X(mL)$$ 
for some $r$ and $m$. Then:
\begin{enumerate}
\item[(i)] $\alpha\geq1$ and $0\leq a\leq 1-\frac1\alpha$, and $a>0$ if $b\neq0$,
\item[(ii)] the divisor $(1-a)D+(m-b)L$ is pseudoeffective.
\end{enumerate} 
\end{lem}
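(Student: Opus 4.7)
The plan is to first prove the pseudoeffectivity claim (ii) using Theorem \ref{thm:CP11}, then deduce the numerical bounds in (i) from it via Proposition \ref{pro:min}, handling the boundary case $a=1$ separately through the vanishing hypothesis.

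For (ii), the starting point is the standard inclusion $\Omega^r_X(\log D)(-D) \hookrightarrow \Omega^r_X$, valid for every $r$ (as one sees in local coordinates adapted to $D = \{x_1 = 0\}$: multiplying a generator $dx_1/x_1 \wedge dx_{i_1} \wedge \cdots$ of $\Omega^r_X(\log D)$ by $x_1$ yields the holomorphic form $dx_1 \wedge dx_{i_1} \wedge \cdots$). Twisting the given inclusion $\OO_X(S) \hookrightarrow \Omega^r_X(\log D) \otimes \OO_X(mL)$ by $\OO_X(-D-mL)$ and composing with the above produces
$$\OO_X(S - D - mL) \hookrightarrow \Omega^r_X.$$
Let $\tilde{\mathcal M}$ denote its saturation in $\Omega^r_X$, so $\tilde{\mathcal M} \simeq \OO_X(S - D - mL + E)$ for some effective divisor $E$, and $\Omega^r_X/\tilde{\mathcal M}$ is torsion-free. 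Composing with the natural surjection $(\Omega^1_X)^{\otimes r} \twoheadrightarrow \Omega^r_X$ exhibits $\Omega^r_X/\tilde{\mathcal M}$ as a torsion-free quotient of $(\Omega^1_X)^{\otimes r}$, so Theorem \ref{thm:CP11} (applied with $K_X = 0$ pseudoeffective) shows that $c_1(\Omega^r_X/\tilde{\mathcal M}) = D + mL - S - E$ is pseudoeffective. Adding the effective divisor $E$ yields $(1-a)D + (m-b)L$ pseudoeffective, establishing (ii).

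For (i), the bound $\alpha \geq 1$ follows immediately from Proposition \ref{pro:min} applied to the very ample integral divisor $D$. To control $a$, note that in Picard number two, since $\kappa(X,L)=-\infty$, the pseudoeffective cone decomposes as $\Effb(X) = \bR_{\geq 0}L + \bR_{\geq 0}R$ for some extremal ray $R$, and $A = \lambda L + \mu R$ with $\mu > 0$ (since $A$ is ample). Writing $S \sim_\Q a\alpha A + (a\beta + b)L$ and reading off the $R$-coefficient forces $a\alpha\mu \geq 0$, so $a \geq 0$; the case $a = 0$ gives $S \sim bL$ effective, which together with $\kappa(X,L)=-\infty$ forces $b = 0$. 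For $a \leq 1 - 1/\alpha$: the same $R$-coefficient analysis applied to the pseudoeffective class $(1-a)D + (m-b)L$ from (ii) yields $(1-a)\alpha \geq 0$, i.e.\ $a \leq 1$; when $a < 1$, Proposition \ref{pro:min} applied to the integral divisor $D - S$, whose $A$-coefficient equals $(1-a)\alpha > 0$, gives $(1-a)\alpha \geq 1$.

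The main obstacle is ruling out the boundary case $a = 1$, in which Proposition \ref{pro:min} degenerates. Here the vanishing hypothesis is essential: when $a=1$, the same twisting trick from (ii), applied to $S \sim D + bL$, yields an inclusion $\OO_X((b-m)L) \hookrightarrow \Omega^r_X$, equivalently a nonzero section of $\Omega^r_X \otimes \OO_X((m-b)L)$, in direct contradiction with the standing assumption $H^0(X, \Omega^q_X \otimes \OO_X(kL)) = 0$ applied with $q = r$ and $k = m - b$. This completes the proof of (i).
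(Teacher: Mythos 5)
Your proposal is correct and follows essentially the same route as the paper: both derive the pseudoeffectivity of $(1-a)D+(m-b)L$ from the inclusion into $\Omega^r_X$ via Theorem \ref{thm:CP11}, rule out $a=1$ using the standing vanishing hypothesis, and obtain $\alpha\geq1$ and $(1-a)\alpha\geq1$ from Proposition \ref{pro:min}. Your explicit saturation step justifying the application of Theorem \ref{thm:CP11}, and the decomposition of $\Effb(X)$ along the two extremal rays to get $a\geq0$ and $a\leq1$, are just more detailed versions of what the paper leaves implicit.
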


\begin{proof}
Notice that $a\geq0$, since $S$ is effective and $L$ lies on the boundary of the pseudoeffective cone. If $b\neq0$, then additionally $a\neq0$ since $L$ is not effective. The inclusion 
$$\OO_X(S) \subseteq \Omega^r_X(\log D) \otimes \OO_X(mL)$$ 
implies that 
$$ H^0\big(X, \Omega^r_X(\log D)(-D) \otimes \sO_X((1-a)D+(m-b)L )\big) \ne 0, $$
so that, via the inclusion $\Omega^r_X(\log D)(-D) \subseteq \Omega^r_X$,
\begin{equation}\label{eq:323}
H^0\big(X,\Omega^r_X \otimes \sO_X((1-a)D+(m-b)L)\big) \ne 0.
\end{equation}
Hence we obtain an inclusion
$$\sO_X\big((a-1)D+(b-m)L\big)\to \Omega^r_X. $$
Since $X$ is a Calabi-Yau threefold, the divisor
$$(1-a)D+(m-b)L\sim_\Q (1- a) \alpha A + \big(m - b - (a - 1) \beta\big) L$$
is pseudoeffective by Theorem \ref{thm:CP11}. Thus,
\begin{equation}\label{eq:157}
(1 - a)\alpha \geq 0,
\end{equation}
and hence $a \leq  1$. If $a = 1$, then \eqref{eq:323} implies
$$ H^0\big(X,\Omega^r_X \otimes \sO_X((m - b)L)\big) \neq 0,$$
which contradicts our assumption. Then from \eqref{eq:157} and from Proposition \ref{pro:min} we have $\alpha\geq1$ and $(1 - a) \alpha \geq 1$, and the lemma follows.
\end{proof}

\begin{pro} \label{prop:globgen} 
Let $X$ be a Calabi-Yau threefold with $\rho(X) = 2$ and let $L$ be a nef Cartier divisor on $X$ with $\nu(X,L)=2$  which is not semiample.  Assume Notation \ref{notation:1}, and assume additionally that 
$$D\sim_\Q \alpha A+\beta L,$$ 
where $A$ is as in Proposition \ref{pro:min} and $\alpha,\beta\in\Q_{>0}$. 

Then, possibly after shrinking $B$, there exists a positive number $n_1$ and an algebraic set $\mathcal V \subsetneq\mathcal X$ such that for all irreducible curves $ C \not  \subseteq \mathcal V$ which are contracted by $\pi$, the restricted bundle $\sE_{n_1} |_C$ is nef. 
\end{pro}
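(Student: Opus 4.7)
The plan is to construct, for $n_1$ large enough, a subsheaf $\mathcal{F}_{n_1}\subseteq\mathcal{E}_{n_1}$ generated by relative global sections whose generic rank equals that of $\mathcal{E}_{n_1}$, and then to transfer nefness from this globally generated subsheaf to $\mathcal{E}_{n_1}$ along curves contracted by $\pi$. After shrinking $B$ in view of Lemma \ref{RR}(a), I will assume that there are $m_0$ and a constant $C_0>0$ with $h^0(X_b,\mathcal{E}_{m,b})\geq C_0 m^2$ for every $m\geq m_0$ and $b\in B$, and that $\pi_*\mathcal{E}_m$ is locally free of constant rank. Writing $\mathrm{ev}_m\colon\pi^*\pi_*\mathcal{E}_m\to\mathcal{E}_m$ for the evaluation, let $\mathcal{F}_m\subseteq\mathcal{E}_m$ be its image and $r_m$ its generic rank on $\mathcal{X}$.

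Suppose we have established $r_{n_1}=3$ for some $n_1$; then $\mathcal{E}_{n_1}/\mathcal{F}_{n_1}$ is a torsion sheaf supported on a proper closed subset $\mathcal{V}\subsetneq\mathcal{X}$. For an irreducible curve $C\not\subseteq\mathcal{V}$ contracted by $\pi$, let $\nu\colon\tilde C\to C$ be the normalization; since $\pi(C)$ is a single point, the pullback $\nu^*(\pi^*\pi_*\mathcal{E}_{n_1})|_C$ is a trivial bundle on $\tilde C$, so the image modulo torsion of $\nu^*\mathrm{ev}_{n_1}|_C$ in $\nu^*\mathcal{E}_{n_1}|_C$ is a quotient of a trivial bundle, hence nef. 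It has the same rank as $\nu^*\mathcal{E}_{n_1}|_C$ since $C$ meets $\mathcal{V}$ in only finitely many points. An elementary argument then concludes: any quotient $\nu^*\mathcal{E}_{n_1}|_C\to\mathcal{Q}$ with $\deg\mathcal{Q}<0$ would restrict to a same-rank image of the nef subbundle inside $\mathcal{Q}$, and this image would be a nef quotient of negative degree, a contradiction.

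The central obstacle is establishing $r_{n_1}=3$. I plan to argue by contradiction: suppose $r_m\leq 2$ for all $m\geq m_0$, and work on a general fiber $X_b$. Let $\mathcal{F}^{\mathrm{sat}}\subseteq\mathcal{E}_{m,b}$ be the saturation of $\mathcal{F}_m|_{X_b}$, of generic rank $r\in\{1,2\}$; every section of $\mathcal{E}_{m,b}$ factors through $\mathcal{F}^{\mathrm{sat}}$, so $h^0(\mathcal{F}^{\mathrm{sat}})\geq C_0 m^2$. Wedging $r$ independent sections gives a nonzero section of $\det\mathcal{F}^{\mathrm{sat}}=\OO_X(M_m)$, so $M_m$ is effective, and the inclusion $\OO_X(M_m)\hookrightarrow\bigwedge^r\mathcal{E}_{m,b}=\Omega^r_X(\log D_b)\otimes\OO_X(rmL)$ falls under Lemma \ref{lem:boundsoncoefficients}. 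Combined with $\rho(X)=2$ and $D\sim_\Q\alpha A+\beta L$, this forces $M_m\sim aD+bL$ with $0\leq a\leq 1-\tfrac{1}{\alpha}$ and $b=O(m)$, so Riemann--Roch together with $L^3=L\cdot c_2(X)=0$ yields $h^0(M_m)\leq\tfrac{ar^2}{2}(L^2\cdot D)m^2+O(m)$. For $r=1$ this is bounded by $\tfrac{1-1/\alpha}{2}(L^2\cdot D)m^2+O(m)$, contradicting the lower bound $h^0(\mathcal{F}^{\mathrm{sat}})=h^0(M_m)\geq\tfrac12(L^2\cdot D)m^2+O(m)$ that follows from Lemma \ref{RR}(b).

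The harder subcase $r=2$ will require splitting $\mathcal{F}^{\mathrm{sat}}$ via any nonzero section into a saturated rank-one subbundle $\OO_X(M')$ and a rank-one torsion-free quotient of determinant $M_m-M'$; applying the rank-one bound to each piece gives $h^0(\mathcal{F}^{\mathrm{sat}})\leq h^0(M')+h^0(M_m-M')$, and one has to show that this sum still fails to reach $C_0 m^2$. The numerical subtlety is that Lemma \ref{lem:boundsoncoefficients} only forces the $D$-coefficient strictly below $1$; extracting the contradiction will need the full force of the minimality of $A\cdot c_2(X)$ from Proposition \ref{pro:min}, played off carefully against the leading coefficient $\tfrac12 L^2\cdot D$ from Lemma \ref{RR}(b). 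This rank-two comparison is the part of the plan I expect to be the most delicate.
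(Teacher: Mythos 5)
Your setup, your treatment of the rank-three case, and your rank-one contradiction all match the paper's proof (Steps 1--3 of Proposition \ref{prop:globgen}). The genuine gap is exactly where you flag it: the rank-two case, and your proposed splitting does not close it. If you split $\mathcal F^{\mathrm{sat}}$ by a section into $\OO_X(M')$ and a rank-one quotient of determinant $M_m-M'$, the constraints you actually have are: $a',\,a_m-a'\geq 0$ with $a_m\leq 1-\tfrac1\alpha$, and (from Lemma \ref{lem:boundsoncoefficients}(ii) applied to $\det\mathcal F^{\mathrm{sat}}\subseteq\Omega^2_X(\log D)\otimes\OO_X(2mL)$) only $b_m\leq 2m+O(1)$. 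Nothing prevents almost all of $b_m$ from sitting in the quotient piece (e.g.\ $M'=0$, so $M_m-M'=M_m$ with $b_m$ close to $2m$), in which case your Riemann--Roch upper bound for the sum is $\approx 2\bigl(1-\tfrac1\alpha\bigr)m^2L^2\cdot D$, which exceeds the target $\tfrac12 m^2L^2\cdot D$ as soon as $\alpha>\tfrac43$. So no contradiction results. (There is also a secondary issue: to convert $\chi$ into an upper bound for $h^0$ you need vanishing of $h^1$, which the paper obtains by showing the relevant rank-one sheaf is ample; your pieces $M'$ and $M_m-M'$ need not be ample.)

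The deeper structural problem is that you are trying to prove something stronger than the proposition requires, namely that $r_m\leq 2$ for all $m$ is outright contradictory. The paper does not do this. In its Step 4 it compares $\sS_{m+1}$ with $\sS_m\otimes\OO_{\mathcal X}(\mathcal L)$ via the induced maps $\beta_m\colon\sS_m\otimes\OO_{\mathcal X}(\mathcal L)\to\mathcal Q_{m+1}=\mathcal E_{m+1}/\sS_{m+1}$ and runs a dichotomy: if some $\beta_m\neq 0$, then $\mathcal Q_{m+1}$ is a generic quotient of a generically globally generated sheaf twisted by the nef $\mathcal L$, hence generically $\pi$-nef, and the extension $0\to\sS_{m+1}\to\sE_{m+1}\to\mathcal Q_{m+1}\to0$ gives the conclusion directly \emph{without} forcing $r_m=3$; if instead all $\beta_m$ vanish, then $\sS_{m+1}=\sS_m\otimes\OO_{\mathcal X}(\mathcal L)$ stabilizes, so $h^0(X_{b_0},\sS_{m,b_0})$ grows like $\tfrac12 a_{m_1}m^2L^2\cdot D$ with $a_{m_1}\leq 1-\tfrac1\alpha<1$, contradicting Lemma \ref{RR}(b). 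This stabilization dichotomy is the key idea missing from your plan, and it is what makes the rank-two case tractable with only the bound $a_m\leq 1-\tfrac1\alpha$ at hand.
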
 
 
\begin{proof}
We note first that by Proposition \ref{pro:2.3new}, by passing to a multiple of $L$ we may assume that
\begin{equation}\label{eq:158}
H^0\big(X,\Omega^q_X \otimes \OO_X(kL)\big)=0\quad\text{for all }q\text{ and all }k.
\end{equation}
We prove the proposition in several steps.

\medskip

\emph{Step 1.}
For each $m$, let $U_m\subseteq B$ be the locus of points where the sheaf $\pi_*\mathcal E_m$ is locally free and has the base change property, 
and denote 
$$U=\bigcap\limits_{m\geq1}U_m.$$ 
Fix $b_0\in U$. By Lemma \ref{RR}(a), there exist a positive constant $C$ and a positive integer $n_1$ such that 
\begin{equation} \label{eq:asymp}   
h^0(X_{b_0},\mathcal E_{m,b_0}) \geq C m^2  \geq 2\quad\text{for }m \geq n_1.
\end{equation} 
Since $B$ is affine, by the definition of $U$ the map
$$ H^0(\mathcal X,\mathcal E_m)\to H^0(X_{b_0},\mathcal E_{m,b_0})$$
is surjective for all $m$, cf.\ \cite[III.12]{Har77}. In particular, 
 $$\rk {\rm Im}(\pi^*\pi_*(\sE_m) \to \sE_m) \geq1\quad\text{for }m\gg0.$$
Let $\sS_m$ be the saturation of ${\rm Im}(\pi^*\pi_*(\sE_m) \to \sE_m)$ in $\sE_m$, and let $ \sS_{m,b}$ be the saturation, in $\sE_{m,b}$, of the sheaf generated by the global sections of $\sE_{m,b}$. Then 
$$\sS_{m,b} = (\sS_m |_{X_b})^{**},$$
and in particular, for every $b\in B$ we have $\sS_m |_{X_b} = \sS_{m,b}$ generically.  Let 
$$r_m = \rk \sS_m = \rk \sS_{m,b}.$$ 
By possibly replacing $b_0$, we may assume that
\begin{equation}\label{eq:165}
\sS_m |_{X_{b_0}} = \sS_{m,b_0} \quad\text{for all }m,
\end{equation} 
see for instance \cite[Proposition 5.2]{GKP16}. The sheaf $\det \sS_{m,b_0}$ is a line bundle by \cite[Proposition 1.9]{Ha80}. Since $\sS_{m,b_0}$ is generically generated by its definition and by \eqref{eq:asymp}, so is its determinant. In other words, $\det\mathcal S_{m,b_0}$ is effective, say $\det \sS_{m,b_0}  \simeq \mathcal O_{X_{m,b_0}}(M)$ with an effective divisor $M.$ 
Write
\begin{equation}\label{eq:161}
M \sim_{\mathbb Q} a_m D_{b_0} + b_mL_{b_0}
\end{equation}
with rational numbers $a_m$ and $b_m$. Since 
$$\det \sS_{m,b_0}\subseteq \Omega^{r_m}_{X_{b_0}}(\log D_{b_0}) \otimes \OO_{X_{b_0}}(r_mmL_{b_0})$$
and \eqref{eq:158} holds, Lemma \ref{lem:boundsoncoefficients} gives $\alpha\geq1$, and also that
\begin{equation}\label{eq:59}
0\leq a_m \leq 1 - \frac{1}{\alpha} \quad\text{for all } m,
\end{equation}
and that
\begin{equation}\label{eq:162}
\text{the divisor }(1-a_m)D_{b_0}+(r_mm-b_m)L_{b_0}\text{ is pseudoeffective.}
\end{equation}

\medskip

\emph{Step 2.} 
Suppose $r_m=3$ for some $m$, and we set 
$$\mathcal V=\Supp\Coker(\pi^*\pi_*\mathcal E_m\to\mathcal E_m).$$
Then for every curve $C\not\subseteq\mathcal V$ contracted by $\pi$, the restriction $\mathcal E_m|_C$ is generically generated, hence nef by Lemma \ref{lem:nefVB}(e).

\medskip

\emph{Step 3.}
Hence we may assume that $r_m\leq2$ for all $m$. In this step we assume that $r_m = 1$ for infinitely many $m$, and in particular, $\det \sS_{m,b_0}=\sS_{m,b_0}$. 
 
By \eqref{eq:RR2} and \eqref{eq:161}, together with Proposition \ref{prop1}(ii) we have
\begin{equation}\label{eq:56}
\chi(X_{b_0},\sS_{m,b_0}) = \frac{1}{2} a_mb_m^2 L^2 \cdot D + \frac{1}{2} a_m^2 b_m L \cdot D^2 + \frac{1}{6} a_m^3 D^3+\frac{1}{12}a_m D\cdot c_2(X).
\end{equation}
If $b_m<0$ for infinitely many $m$, then these $b_m$ are bounded from below, since $a_m$ are bounded from above and the divisors $a_m D_{b_0} + b_mL$ are pseudoeffective. 
But then all $\sS_{m,b_0}$ lie in a compact subset of $\Pic(X)$, hence take only finitely many values, which contradicts the fact that $h^0(X_{b_0},\sS_{m,b_0})$ grows quadratically with $m$ by \eqref{eq:asymp}.

Therefore, 
\begin{equation}\label{eq:163}
b_m\geq0\quad\text{ for }m\gg0,
\end{equation}
and a similar argument shows that, by \eqref{eq:162}, there exists a positive constant $c$ such that 
\begin{equation}\label{eq:164}
b_m-m<c\quad\text{for all }m.
\end{equation}
From \eqref{eq:164}, \eqref{eq:asymp} and by Lemma \ref{lem:boundsoncoefficients} we have $a_m>0$, which implies that $\mathcal S_{m,b_0}$ is ample for $m\gg0$. Hence, by \eqref{eq:56}, by Kodaira vanishing, and by Lemma \ref{RR}(b) we have
\begin{multline*}
h^0(X_{b_0},\sS_{m,b_0}) = \frac{1}{2} m^2 L^2 \cdot D + O(m)\\
=\frac{1}{2} a_mb_m^2 L^2 \cdot D + \frac{1}{2} a_m^2 b_m L \cdot D^2 + \frac{1}{6} a_m^3 D^3+\frac{1}{12}a_m D\cdot c_2(X),
\end{multline*}
which is a contradiction by \eqref{eq:59}, \eqref{eq:163} and \eqref{eq:164}. 

\medskip
 
\emph{Step 4.} 
We are now reduced to the case $r_m = 2$ for all large $m$.  

Let $p_X\colon \mathcal X \to X$ be the projection to $X$ and recall that $\sL = p_X^*L$. Consider the subsheaves 
$$ \sS_{m+1} \subseteq  \sE_{m+1}\quad\text{and}\quad \sS_m \otimes \OO_\mathcal X(\mathcal L) \subseteq \sE_m \otimes \OO_\mathcal X(\mathcal L)=\mathcal E_{m+1}.$$
Introduce the torsion free rank $1$ sheaves  $\mathcal Q_m = \mathcal E_m /  \sS_m$ and consider the induced maps
$$ \alpha_m\colon \sS_{m+1} \to \mathcal E_m\otimes\OO_\mathcal X(\mathcal L)\to \mathcal Q_m \otimes \OO_\mathcal X(\mathcal L) $$ 
and
$$ \beta_m\colon \sS_m \otimes \OO_\mathcal X(\mathcal L)\to\mathcal E_{m+1} \to\mathcal Q_{m+1}.$$ 

Suppose first that $\beta_m$ is not the zero map for some $m$, and in particular, $\beta_m$ is generically surjective since $\mathcal Q_{m+1}$ is of rank $1$. We define
$$\mathcal W=\Supp\Coker(\pi^*\pi_*\mathcal E_m\to\mathcal E_m)\cup\Supp\Coker(\pi^*\pi_*\mathcal E_{m+1}\to\mathcal E_{m+1}).$$
Then for every curve $C\not\subseteq \mathcal W$, the sheaf  $\sS_m|_C$ is generically globally generated. Since $\beta_m$ induces the generically surjective morphism 
$$\mathcal S_m|_C\to \big(\mathcal Q_{m+1}\otimes\OO_{\mathcal X}(-\mathcal L)\big)|_C,$$ 
the line bundle $\big(\mathcal Q_{m+1}\otimes\OO_{\mathcal X}(-\mathcal L)\big)|_C$ is nef by Lemma \ref{lem:nefVB}(f), hence $\mathcal Q_{m+1}|_C$ is nef. If $\nu\colon\widetilde C\to C$ is the normalisation, we have the exact sequence
$$ \nu^*(\sS_{m+1}|_C) \xrightarrow{\gamma_{m+1}} \nu^*(\sE_{m+1}|_C) \to \nu^*(\mathcal Q_{m+1}|_C) \to 0,$$ 
and since $\gamma_{m+1}\big(\nu^*(\sS_{m+1}|_C)\big)$ is nef by Lemma \ref{lem:nefVB}(a,e), the line bundle $\nu^*(\sE_{m+1}|_C)$ is nef by Lemma \ref{lem:nefVB}(d). Therefore, $\sE_{m+1}$ is $\pi$-nef outside of $\mathcal W$, confirming the claim of the proposition. 

We note here for later use, that if $\mathcal E_m$ is not $\pi$-nef outside of an algebraic set for all $m$, then
\begin{equation}\label{eq:159}
h^0(\mathcal X,\mathcal Q_m)=0\quad\text{for all }m.
\end{equation}
Otherwise there would exist a nontrivial morphism $\OO_\mathcal X\to\mathcal Q_m$, which would imply as above that $\mathcal Q_m$ and $\mathcal E_m$ are $\pi$-nef outside of an algebraic set.  

Thus we may assume that $\beta_m$ is the zero map for all large $m$, and thus 
$$\sS_m \otimes \OO_\mathcal X(\mathcal L)\subseteq \mathcal S_{m+1}\quad\text{for}\quad m\gg0.$$ 
Therefore $\sS_m \otimes \OO_\mathcal X(\mathcal L)$ and $\mathcal S_{m+1}$ are generically equal, thus $\alpha_m$ is generically the zero map, which implies that $\alpha_m$ is the zero map since the image of $\alpha_m$ is a torsion free coherent subsheaf of $\mathcal Q_m \otimes \OO_\mathcal X(\mathcal L)$. We conclude that there exists $m_1\gg0$ such that
$$ \sS_{m+1} = \sS_m \otimes \OO_\mathcal X(\mathcal L) \quad\text{for}\quad m \geq m_1,$$
and thus 
$$ \sS_m = \sS_{m_1} \otimes \OO_\mathcal X\big((m-m_1) \sL\big).$$
Restricting this last equation to $X_{b_0}$, by \eqref{eq:165} we have 
$$ \sS_{m,b_0}  = \sS_{m_1,b_0} \otimes \sO_{X_{b_0}}\big((m-m_1) L_{b_0}\big).$$
We claim that 
\begin{equation}\label{eq:160}
h^0\big(X_{b_0}, \sS_{m_1,b_0} \otimes \sO_{X_{b_0}}(mL_{b_0})\big) = \frac{1}{2} m^2 L^2 \cdot c_1(\sS_{m_1,b_0}) +O(m).
\end{equation}
This immediately implies the proposition: indeed, by \eqref{eq:59} we have that $c_1(\sS_{m_1,b_0}) = a_{m_1} D_{b_0} + b_{m_1} L_{b_0}$, where $a_{m_1}$ is bounded away from $1$, hence
$$h^0\big(X_{b_0}, \sS_{m_1,b_0} \otimes \sO_{X_{b_0}}(mL_{b_0})\big) = \frac{1}{2}a_{m_1} m^2 L^2 \cdot D +O(m).$$
Since $h^0(X_{b_0},\sE_{m_1+m,b_0}) =  h^0\big(X, \sS_{m_1,b_0} \otimes \OO_{X_{b_0}}(mL_{b_0})\big)$, this is a contradiction by Lemma \ref{RR}(b).

It remains to prove \eqref{eq:160}. First note that the Chern classes $c_1\big(\sS_{m_1,b_0} \otimes \sO_X(mL_{b_0})\big) $ and $c_2\big(\sS_{m_1,b_0} \otimes \sO_X(mL_{b_0})\big) $ are computed as in the locally free case, since $\sS_{m_1,b_0} $ is locally free outside a finite set by \cite[Corollary 1.4]{Ha80}, whereas 
$$c_3\big(\sS_{m_1,b_0}\otimes \sO_X(mL_{b_0})\big) = c_3(\sS_{m_1,b_0}), $$ 
cf.\ \cite[p.\ 130]{Ha80}. Thus Riemann-Roch for coherent sheaves \cite{OTT81} gives 
\begin{equation}\label{eq:167}
\chi\big(X_{b_0}, \sS_{m_1,b_0} \otimes \sO_{X_{b_0}}(mL_{b_0})\big) = \frac{1}{2} m^2 L^2 \cdot c_1(\sS_{m_1,b_0}) +O(m).
\end{equation}
Define sheaves $\mathcal Q_{m,b_0}$ by the short exact sequences
\begin{equation}\label{eq:166}
0\to \sS_{m,b_0}\to \sE_{m,b_0}\to \mathcal Q_{m,b_0}\to 0,
\end{equation}
and observe that $\mathcal Q_{m,b_0}=\mathcal Q_m|_{X_{b_0}}$ by \eqref{eq:165}. Since $B$ is affine, \eqref{eq:159} shows that the rank of the sheaf $\pi_*\mathcal Q_m$ is zero for every $m$, hence by possibly changing $b_0$, we have $h^0(X_{b_0},\mathcal Q_{m,b_0}) = 0$ for every $m$. Then \eqref{eq:166} and Lemma \ref{RR}(b) give 
$$h^1(X_{b_0},\mathcal S_{m,b_0})\leq h^1(X_{b_0},\mathcal E_{m,b_0})=O(m),$$
and since $h^2(X_{b_0},\sS_{m,b_0})=h^3(X_{b_0},\sS_{m,b_0}) = O(m)$ by Theorem \ref{thm:fujita}, the claim follows from \eqref{eq:167}.
\end{proof} 

\begin{pro}\label{nef}
Let $X$ be a Calabi-Yau threefold with $\rho(X) = 2$ and let $L$ be a nef Cartier divisor on $X$ with $\nu(X,L)=2$  which is not semiample.  Assume Notation \ref{notation:1}, and assume additionally that 
$$D\sim_\Q \alpha A+\beta L,$$ 
where $A$ is as in Proposition \ref{pro:min} and $\alpha,\beta\in\Q_{>0}$. Then there exists a smooth ample divisor $G$ on $X$ such that, after shrinking $B$, such that the divisor $D_b+G$ has simple normal crossings for every $b\in B$ and there exists a positive number $m_2$ such that 
$$\Omega^1_{X_b}\big(\log (D_b+G)\big) \otimes \OO_{X_b}(mL_b)\quad\text{is nef}$$
for all $m \geq m_2$ and for all $b\in B$. 
\end{pro}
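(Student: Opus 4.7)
The plan is to upgrade the generic nefness of Proposition \ref{prop:globgen} to genuine nefness on every $X_b$. By that proposition, after shrinking $B$, there are $n_1 \in \N$ and a proper algebraic subset $\mathcal V \subsetneq \mathcal X$ such that $\sE_{n_1,b}|_C$ is nef for every irreducible curve $C \subseteq X_b$ with $C \not\subseteq \mathcal V_b := \mathcal V \cap X_b$. Since $L|_C$ is nef on any such $C$, tensoring $\sE_{n_1,b}$ with $\OO_{X_b}((m-n_1)L)$ preserves this property, and hence $\sE_{m,b}|_C$ is nef for all $m \geq n_1$ and all curves $C \not\subseteq \mathcal V_b$. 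The remaining task is to produce a single $m_2$ such that $\sE_{m,b}|_C$ is nef for every $m \geq m_2$, every $b \in B$, and every irreducible curve $C \subseteq \mathcal V_b$.

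Next I would shrink $B$ so that every irreducible component of $\mathcal V$ dominates $B$, since components that do not dominate $B$ miss the generic fibre of $\pi$. Then $\mathcal V_b$ has dimension at most $2$, and the curves contained in some $\mathcal V_b$ form a bounded family in $X$. I split these curves according to whether $L \cdot C = 0$ or $L \cdot C > 0$. In the case $L\cdot C = 0$, by Lemma \ref{generic}(a) there are only countably many such curves on $X$, and on each irreducible surface component $S$ of $\mathcal V_b$ only finitely many occur since $L|_S$ is nef and big by Lemma \ref{lem:surface}; together with the finiteness of the irreducible components of $\mathcal V$, this reduces to finitely many bad curves $C_1,\dots,C_N$ on $X$. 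Since $K_X \cdot C_i = 0$, Lemma \ref{lem:HN} provides a non-empty Zariski open subset of $|H|$ on which $\Omega^1_X(\log D)|_{C_i}$, and hence $\sE_{m,b}|_{C_i}$ for every $m$, is nef; intersecting these finitely many Zariski open subsets with $B$ gives a further shrinking. In the case $L\cdot C > 0$, the idea is to use boundedness of the family of pairs $(C, D_b)$ to obtain a uniform upper bound on the slopes appearing in the Harder-Narasimhan filtration of $\nu^*(\Omega^1_X(\log D_b)|_C)$, where $\nu$ is the normalization, and then to apply Lemma \ref{lem:nefVB}(d) together with a choice $m_2 \gg 0$ so that $(mL)|_C$ dominates this negativity uniformly in $C$ and $b$.

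The main obstacle is obtaining the uniform slope bound in the $L\cdot C > 0$ case when $\mathcal V_b$ contains surfaces, since the family of relevant curves is then positive-dimensional. This requires constructibility of the relative Harder-Narasimhan filtration over the bounded family of logarithmic pairs $(X, D_b)$, so that the destabilising slopes do not blow up as $C$ or $b$ moves in the family. Once such a uniform bound is established, taking $m_2$ large enough to absorb both this bound and the integer $n_1$ yields nefness of $\sE_{m,b}$ on every curve in $X_b$, which is equivalent to $\sE_{m,b}$ being nef as required.
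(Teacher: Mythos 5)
Your reduction to curves inside $\mathcal V$ is the right starting point, and your treatment of the $L\cdot C=0$ curves (countability via Lemma \ref{generic}, finiteness on each surface where $L$ is nef and big, then Lemma \ref{lem:HN}) matches the endgame of the paper's proof. The gap is in the case $L\cdot C>0$ for curves lying in a codimension-one component $\mathcal V_j$ of $\mathcal V$: the fibre $\mathcal V_j\cap X_b$ is then a surface, and the irreducible curves contained in a fixed surface do \emph{not} form a bounded family --- their degrees are unbounded --- so there is no single constant bounding the destabilising slopes of $\nu^*\bigl(\Omega^1_X(\log D_b)|_C\bigr)$ over all such $C$. What you would actually need is that the minimal slope is bounded below by $-m_2\,(L\cdot C)$ for \emph{every} curve $C$ in the surface; since both sides grow with the degree of $C$, this is a genuine positivity statement about the restricted sheaf relative to $L$, not a consequence of constructibility of Harder--Narasimhan filtrations in a bounded family. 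You flag this as ``the main obstacle'' but do not resolve it, and as stated the argument does not close.

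The paper circumvents this by never arguing curve-by-curve on the surface components: since $\rho(X)=2$ and $L$ is not semiample, Lemma \ref{lem:surface} gives $L^2\cdot S>0$ for every surface $S$, so $\mathcal L|_{\mathcal V_j}$ is $\pi_j$-nef and $\pi_j$-big; Kodaira's trick writes $\mathcal L|_{\mathcal V_j}\sim_\Q\mathcal A_j+\mathcal B_j$ with $\mathcal A_j$ relatively ample and $\mathcal B_j$ effective, and twisting $\mathcal E_{n_1}|_{\mathcal V_j}$ by $k\mathcal A_j$ makes it relatively globally generated, hence nef on every curve of $\mathcal V_j$ not contained in $\Supp\mathcal B_j$. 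This reduces the bad locus to codimension two in $\mathcal X$, where the relevant curves really do form finitely many families over $B$; a Stein factorisation argument then shows each such family either has constant positive $L$-degree (handled by increasing $m$) or consists of $L$-trivial curves, which by countability must be finitely many fixed curves on $X$, and these are disposed of by Lemma \ref{lem:HN}. You would need to import this relative-bigness step (or some substitute for it) to make your $L\cdot C>0$ case work.
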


\begin{proof}
\noindent \emph{Step 1.} 
In this step, we show that after possibly shrinking $B$, there exist a positive integer $n_2$ and an algebraic set $\mathcal C\subseteq\mathcal X$ of codimension $\geq2$, such that $\mathcal E_m|_C$ is nef for every $m\geq n_2$ and for every curve $C\not\subseteq\mathcal C$ which is contracted by $\pi$.

Indeed, by  Proposition \ref{prop:globgen} we find a positive integer $n_1$ and an algebraic set $\mathcal V \subseteq \mathcal X$  such that $\mathcal E_{n_1} |_C $ is nef for all curves $C \not  \subseteq \mathcal V$ contracted by $\pi$.  Let $\mathcal V_1,\dots,\mathcal V_k$ and $\mathcal W_1,\dots,\mathcal W_l$ be the codimension $1$, respectively codimension $2$ irreducible components of $\mathcal V$. Set $\pi_j=\pi|_{\mathcal V_j}$. By possibly shrinking $B$, we may assume that $\mathcal V$ does not contain any fibre of $\pi$, and that each $\pi_j$ is flat. 

For each $j$, the line bundle $\mathcal L|_{\mathcal V_j}$ is clearly $\pi_j$-nef. Moreover, it is also $\pi_j$-big: indeed, for each $b\in B$, the set $\mathcal V_j\cap X_b$ is a surface in $X_b$, and since $\nu(X,L)=2$, we have $(\mathcal L|_{\mathcal V_j\cap X_b})^2 >0$ by Lemma \ref{lem:surface}. Therefore, by Kodaira's trick, there exist a $\pi_j$-ample $\Q$-divisor $\mathcal A_j$ and an effective $\Q$-divisor $\mathcal B_j$ such that 
$$\mathcal L|_{\mathcal V_j}\sim_\Q \mathcal A_j+\mathcal B_j.$$ 
For $k$ sufficiently divisible, the sheaf $\mathcal E_{n_1}|_{\mathcal V_j}\otimes \OO_{\mathcal V_j}(k\mathcal A_j)$ is $\pi_j$-globally generated, and we conclude that for every curve $C$ contracted by $\pi$ which is not contained in the locus 
$$\mathcal C=\bigcup\Supp\mathcal B_j\cup\bigcup\mathcal W_i,$$ 
the sheaf $\mathcal E_{n_1+k}|_C$ is nef. We set $n_2=n_1+k$.

\medskip  
  
\emph{Step 2.}  
In the second step we show that there exist a positive integer $n_3$ and finitely many curves $C_1, \dots, C_s$ on $X$, such that $\sE_m |_C$ is nef for all $m\geq n_3$ and for all curves $C \notin \{C_1, \dots, C_s\}$. 

In order to prove the claim, let $\mathcal C_1,\dots,\mathcal C_s$ be the irreducible components of $\mathcal C$. Fix $j$, and consider the normalisation $\nu_j\colon\mathcal C_j^\nu\to \mathcal C_j$ and the Stein factorisation $\alpha_j\colon\mathcal C_j^\nu\to B_j$ of $\pi|_{\mathcal C_j}\circ\nu_j$. 
\[
\xymatrix{ 
\mathcal C_j^\nu \ar[dr]_{\alpha_j} \ar[r]^{\nu_j} & \mathcal C_j \ar[r]^{\pi|_{\mathcal C_j}} & B\\
& B_j \ar[ur] &  
}
\]
After possibly shrinking $B$, we may assume that $\alpha_j$ is a flat morphism. Let $B_{jk}$ be the connected components of $B_j$ and let $\mathcal C_{jk}^\nu=\alpha_j^{-1}(B_{jk})$ and $\mathcal C_{jk}=\nu_j(\mathcal C_{jk}^\nu)$. Then $\mathcal C_{jk}^\nu$ is irreducible since $\alpha_j$ has connected fibres and therefore $\mathcal C_{jk}$ is an irreducible component of $\mathcal C_j$, which maps onto $B$. Now, for fixed $k$ we have that
$$\nu_j^*\mathcal L\cdot \alpha_j^{-1}(b)=\mathcal L\cdot(\nu_j)_*\big(\alpha_j^{-1}(b)\big)\quad\text{ is constant for }b\in B_{jk}.$$ 
So if this constant is positive, then $\mathcal L|_{\mathcal C_{jk}}$ is $\pi|_{\mathcal C_{jk}}$-ample. In particular, increasing $m_2$, we conclude that $\mathcal E_m|_{\mathcal C_{jk}}$ is $\pi|_{\mathcal C_{jk}}$-nef. 

Therefore, if $\mathcal E_m|_C$ is not nef, then $C$ belongs to a family $\mathcal C_{jk}$ on which $L$ is numerically trivial. Since there are only countably many $L$-trivial curves on $X$ by Lemma \ref{generic}(a), each family $\mathcal C_{jk}$ must be constant. This shows the claim of Step 2.

\medskip

\emph{Step 3.} 
Now by Lemma \ref{lem:HN} we may choose a smooth ample divisor $G$ on $X$ such that $\Omega^1_{X_b}(\log G) |_{C_j}$ is nef for all $j=1,\dots,s$. Let $C$ be a curve on $X$. If $C \neq C_j$ for all $j$, then tensoring the residue sequence \eqref{eq:residue2} associated to $D_b$ and $D_b+G$ by $\OO_{X_b}(mL_b)$ for $m \geq n_3$, we obtain the exact sequence
\begin{multline*}
\Omega^1_{X_b}(\log D_b) \otimes \OO_{X_b}(mL_b)|_C\\
\to \Omega^1_{X_b}(\log (D_b+G)) \otimes \OO_{X_b}(mL_b)|_C \to \OO_G(mL_b)|_C \to 0,
\end{multline*}
which yields the nefness of $\Omega^1_{X_b}(\log (D_b+G)) \otimes \OO_{X_b}(mL_b)|_C$ by Lemma \ref{lem:nefVB}(d). On the other hand, if $C = C_j$ for some $j$, then tensoring the residue sequence \eqref{eq:residue2} associated to $G$ and $D_b+G$ by $\OO_{X_b}(mL_b)$ for $m \geq n_3$, we obtain the exact sequence
\begin{multline*}
\Omega^1_{X_b}(\log G)|_C \otimes \OO_{X_b}(mL_b) \\
\to \Omega^1_{X_b}(\log (D_b+G)) \otimes \OO_{X_b}(mL_b)|_C \to \OO_{D_b}(mL_b)|_C \to 0,
\end{multline*}
which yields the nefness of $\Omega^1_{X_b}(\log (D_b+G)) \otimes \OO_{X_b}(mL_b)|_C$ again by Lemma \ref{lem:nefVB}(d). This finishes the proof.
\end{proof}

Combining the results of Sections \ref{sec:ample} and \ref{sec:nu=2} together with Lemma \ref{generic}(b) we immediately obtain the following:

\begin{cor} \label{cor:vanishings}
Let $X$ be a Calabi-Yau threefold with $\rho(X) = 2$ and let $L$ be a nef line bundle on $X$ with $\nu(X,L)=2$  which is not semiample. Then
\begin{enumerate}
\item[(i)] $c_3(X) = 0$,
\item[(ii)] there exists a positive integer $k$ such that for all integers $m$ we have
$$ H^q\big(X,\Omega^p_X \otimes \sO_X(mkL)\big) = 0\quad\text{for all }p,q.$$
\end{enumerate} 
\end{cor}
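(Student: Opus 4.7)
The plan is to reduce everything to Theorem \ref{ample} by constructing a very ample divisor satisfying its hypotheses, and then to read off both (i) and the vanishings in (ii) from the chain of cohomological arguments already used inside the proof of Theorem \ref{ample}.

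First I would construct the required divisor. Let $A$ be the ample prime furnished by Proposition \ref{pro:min}, and set $H = N(A+L)$ for $N\gg 0$, so that $H$ is very ample. For a very general $D\in|H|$, Lemma \ref{generic}(b) ensures that $L|_D$ is ample, which is hypothesis (ii) of Theorem \ref{ample}. Since $D\sim_\Q NA + NL$ has both coefficients strictly positive, Proposition \ref{nef} (after possibly shrinking the base in Notation \ref{notation:1}) yields a positive integer $m_2$ with $\Omega^1_X(\log D)\otimes\OO_X(mL)$ nef for every $m\geq m_2$, which is hypothesis (i). With both hypotheses of Theorem \ref{ample} in force, the proof of that theorem already derives $c_3(X)=0$ under the standing assumption that $L$ is not semiample: it combines Propositions \ref{pro:H2vanishing} and \ref{pro:H1vanishing} with Proposition \ref{pro:2.3new} and the Riemann--Roch formula from Proposition \ref{prop1}(iv). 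This gives (i).

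For (ii), I would simply assemble the vanishings produced along the way. After replacing $L$ by a sufficiently divisible multiple, the following hold for every $m\geq 1$: $H^q(X,\OO_X(mL))=0$ for all $q$ by Corollary \ref{cor:van}, $H^0(X,\Omega^p_X\otimes\OO_X(mL))=0$ for all $p$ by Proposition \ref{pro:2.3new}, and $H^2(X,\Omega^p_X\otimes\OO_X(mL))=0$ for $p=1,2$ by Propositions \ref{pro:H2vanishing} and \ref{pro:H1vanishing}. The case $p=3$ reduces to $p=0$ via $\Omega^3_X\simeq\OO_X$. For $p=1,2$, the remaining groups $H^1$ and $H^3$ are forced to vanish by Proposition \ref{prop1}(iv), which gives $\chi(X,\Omega^p_X\otimes\OO_X(mL))=\pm\tfrac{1}{2}c_3(X)=0$ by (i). For $m\leq -1$, Serre duality
$$H^q\bigl(X,\Omega^p_X\otimes\OO_X(mL)\bigr)\simeq H^{3-q}\bigl(X,\Omega^{3-p}_X\otimes\OO_X(-mL)\bigr)^{*}$$
transfers the positive-side vanishings. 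Taking $k$ to be the maximum of all the bounds on $m$ then yields the uniform statement.

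The only genuine subtlety is choosing $H$ so as to trigger Proposition \ref{nef} and Lemma \ref{generic}(b) \emph{simultaneously}: Proposition \ref{nef} demands $D\sim_\Q\alpha A+\beta L$ with $\alpha,\beta>0$ for the \emph{specific} prime $A$ coming from Proposition \ref{pro:min}, and the combination $H=N(A+L)$ is tailored precisely to this. Beyond this choice, the corollary is a consolidation of the vanishings already established in Sections \ref{sec:ample} and \ref{sec:nu=2}, and no new analytic input is required.
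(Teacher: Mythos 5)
Your proposal is correct and follows exactly the route the paper intends: the paper's proof is literally ``combining the results of Sections 5 and 6,'' and your assembly --- choosing $H=N(A+L)$ so that Proposition \ref{nef} and Lemma \ref{generic}(b) apply simultaneously, running the proof of Theorem \ref{ample} to get $c_3(X)=0$, and then deducing the remaining vanishings from $h^0=h^2=0$ together with $\chi=\pm\tfrac12 c_3(X)=0$ and Serre duality for negative twists --- is precisely that argument, with the minor improvement that $h^1+h^3=0$ falls out of the Euler characteristic at once. The only caveat is in the statement itself, not your proof: ``for all integers $m$'' must be read as ``for all $m\neq0$,'' since $H^0(X,\OO_X)\neq0$.
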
 

\begin{rem}
Let $X$ be a Calabi-Yau threefold with $\rho(X) = 2$ and let $L$ be a nef line bundle on $X$ with $\nu(X,L)=2$ which is not semiample. Let $D$ be any smooth divisor. 
Then from Corollary \ref{cor:vanishings} and the standard residue sequences associated to $\Omega^1_X(\log D)$ and $\Omega^2_X(\log D)$, we deduce that there is a positive integer $m_0$ such that for every integer $m \geq m_0$ the residue maps 
$$ H^0\big(X, \Omega^1_X(\log D) \otimes \OO_X(mL)\big) \to  H^0\big(D,\OO_D(mL)\big) $$
and 
$$ H^0\big(X, \Omega^2_X(\log D) \otimes \OO_X(mL)\big) \to  H^0\big(D,\Omega^1_D \otimes \OO_D(mL)\big) $$
are isomorphisms. We expect that on a simply connected manifold this can never happen.
\end{rem} 

\section{Proof of Corollary \ref{cor:rational}}\label{section4}

In this section we prove Corollary \ref{cor:rational}. We follow the arguments in \cite{Og93}. Choose a rational boundary ray $R$ of $\Nef(X)$ and a Cartier divisor $L$ such that $R=\R_+L$. If $L^2 \ne 0$, then $L$ is semiample by Theorem \ref{MT2}, and hence produces an elliptic fibration $f\colon X \to S$. Then $X$ contains a rational curve by \cite{Pe91,Og93}.  If $L^2 = 0$, then we claim that the second ray $R'$ is also rational. Indeed, fix any divisor $L'$ such that $R'=\R_+L'$. If $(L')^3 > 0$, then the claim follows from \cite{Wi89} or from the Cone theorem. If $(L')^3 = 0$, fix an ample divisor $H$ and consider the cubic polynomial 
$$ p(t) = (L+tH)^3\in \Z[t]. $$ 
There is a unique real number $t_0 < 0$ such that $-(L+t_0H) \in R'$. Thus $p(t_0) = 0$. Since $p$ has integer coefficients and a double zero at $0$, the number $t_0 $ is rational and hence the ray $R'$ is rational. Since $t_0$ is a simple zero of $p$, we have $(L')^2 \ne 0$, and we conclude as above.

\section{A semiampless criterion, II}\label{sec:semicriterion2}

In this section we generalise Theorem \ref{ample} to the case where there do exist surfaces $S$ such that $L \cdot S = 0 $. In this case we have $\rho (X) \geq 3$ by Lemma \ref{generic}. Since Theorem \ref{ample} is easier and since the relevant applications are restricted to the case of Picard number $\rho(X) = 2$ so far, we decided for the benefit of the reader to treat the more general case separately. 

\begin{rem}
Let $X$ be a Calabi-Yau threefold and let $L$ be a nef divisor on $X$ with $\nu(X,L)=2$. Then there are {\it only finitely many} surfaces $S$ with $L\cdot S=0$. Indeed, it suffices to show that all such surfaces are linearly independent in the finite-dimensional vector space $N^1(X)_\R$. Let $E_1,\dots E_r$ be a finite collection of surfaces on $X$ with $L\cdot E_i=0$ for all $i$, and assume that $\sum e_i E_i\equiv 0$ for some real numbers $e_i$ which are not all zero. We may assume that $e_i\geq0$ for $i\leq k$, and $e_i<0$ otherwise. Denote 
$$A :=\sum_{i\leq k}e_iE_i\quad\text{and}\quad B:= - \sum_{i>k}e_iE_i,$$ 
and note that $A\neq0$ and $B\neq0$.
Let $H$ be a hyperplane section such that $B\cdot H$ is an effective $1$-cycle $C$ not contained in $\Supp A$. Since $\nu(X,L) =2$, we have $(A|_H)^2<0$ by the Hodge index theorem on $H$, and hence
$$A\cdot C=A\cdot B\cdot H=A^2\cdot H<0,$$
a contradiction.
\end{rem}

\subsection{Reduction to good Calabi-Yau models}

\begin{dfn}\label{dfn:goodCY}
Let $X$ be a Calabi-Yau threefold and let $L$ be a nef divisor on $X$ with $\nu(X,L) = 2$. A prime divisor $S$ on $X$ is \emph{orthogonal to $L$} if $L\cdot S=0$. Let $S_1, \ldots, S_r$ be all the prime divisors on $X$ orthogonal to $L$. If there exist a birational morphism $\phi\colon X \to Z$ and a Cartier divisor $M$ on $Z$ such that:
\begin{enumerate}
\item[(a)] $Z$ is a $\Q$-factorial projective threefold with canonical singularities and $\omega_Z \simeq \OO_Z$, 
\item[(b)] $\phi$ contracts precisely the divisors $S_j$,
\item[(c)] $L\sim\phi^*M$,
\end{enumerate}  
then we call $\phi$ a \emph{good Calabi-Yau model for $L$}.
\end{dfn}

The starting observation is that every Calabi-Yau threefold $X$ together with a nef divisor $L$ can be modified to a good Calabi-Yau model:

\begin{thm}\label{flop}
Let $X$ be a Calabi-Yau threefold and let $L$ be a nef divisor on $X$ with $\nu(X,L) =2$. Let $S_1, \ldots, S_r$ be all the prime divisors on $X$ orthogonal to $L$. Then there is a diagram
\[
\xymatrix{ X \ar@{-->}[r]^{\pi}  & \widetilde X \ar[d]^{\phi} \\
 & X',
}
\]
where $\pi$ is an isomorphism in codimension $1$, $\pi_*L$ is a nef  divisor, and $\phi$ is a good Calabi-Yau model for $\pi_*L$.
\end{thm}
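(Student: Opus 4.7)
The plan is to construct $\pi$ and $\phi$ by running an MMP on the pair $(X, \Delta)$ with $\Delta = \sum_{i=1}^r \varepsilon_i S_i$ for small rational $\varepsilon_i > 0$. The preliminary step is to observe that $L|_{S_i} \equiv 0$ numerically for every $i$. Indeed, $L \cdot S_i \equiv 0$ as a $1$-cycle on $X$, so intersecting with an ample $H$ gives $(L|_{S_i}) \cdot (H|_{S_i}) = L \cdot S_i \cdot H = 0$, and intersecting with $L$ itself gives $(L|_{S_i})^2 = L^2 \cdot S_i = 0$. As $L|_{S_i}$ is nef, the Hodge index theorem applied on the normalisation of $S_i$ to the pullback of $L$ forces $L|_{S_i} \equiv 0$. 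In particular $L \cdot C = 0$ for every irreducible curve $C \subseteq S_i$, which is the crucial triviality ensuring that $L$ descends along any contraction of the $S_i$.

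Since $K_X \equiv 0$, the $(K_X + \Delta)$-MMP coincides with the $\Delta$-MMP; any extremal ray $R$ it contracts satisfies $\Delta \cdot R < 0$, is therefore spanned by a curve inside some $S_i$, and hence satisfies $L \cdot R = 0$ by the previous paragraph. Consequently, the strict transform of $L$ remains nef after each small modification or divisorial contraction, and after finitely many steps one obtains a birational contraction $X \dashto X'$ whose exceptional locus is exactly $\bigcup_i S_i$. Termination rests on the termination of log flips in dimension three (Kawamata--Matsuki--Mori) together with the finiteness of divisorial contractions; throughout the process the variety is $\Q$-factorial with canonical singularities and numerically trivial canonical class, so $X'$ is a Calabi-Yau model of the required type.

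To extract the factorisation $X \dashto \tilde X \to X'$, I would arrange the MMP so that all small modifications (which are isomorphisms in codimension one) come first and the divisorial contractions last, by always choosing a small extremal ray when one is available. Composing the small steps yields $\pi \colon X \dashto \tilde X$, and composing the divisorial ones yields $\phi \colon \tilde X \to X'$ contracting precisely the strict transforms of the $S_i$. Since $\pi_* L$ is nef and numerically trivial on each $\phi$-exceptional divisor (the vanishing $L|_{S_i} \equiv 0$ is preserved under small modifications), the relative base-point free theorem applied to $\phi$ yields a Cartier divisor $M$ on $X'$ with $\pi_* L \sim \phi^* M$, completing the construction.

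The main obstacle is twofold. First, termination of the $\Delta$-MMP on a Calabi-Yau threefold is non-trivial: the class of $\Delta$ is very small, so $(K_X+\Delta)$-flips are geometrically close to flops, and termination requires the special structure of log flips in dimension three combined with boundedness results for $\varepsilon$-lc pairs. Second, the clean reordering ``small steps first, divisorial contractions last'' is not automatic and requires a careful analysis of the face of $\NEb(X)$ cut out by $L$ and of which extremal rays remain available after each step, in order to guarantee that a small $(K_X+\Delta)$-negative extremal ray can always be contracted before beginning to remove the $S_i$ divisorially.
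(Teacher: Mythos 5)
Your overall strategy---running an MMP for $(X,\varepsilon\sum S_i)$ and using $K_X\equiv 0$ together with the preliminary observation $L|_{S_i}\equiv 0$ to see that every step is $L$-trivial and preserves nefness---is the same as the paper's, but as written there are two genuine gaps. First, you assert without argument that the MMP terminates with exceptional locus exactly $\bigcup_i S_i$. The easy half is that nothing else can be contracted; the nontrivial half is that \emph{all} of the $S_i$ are contracted, i.e.\ that the MMP cannot stop with $K+\Delta'$ nef while some $S_i$ survives. The paper handles this by first proving $N_\sigma(S)=S$ for $S=\sum S_j$ (if $P_\sigma(S)\neq 0$, then $L\cdot P_\sigma(S)=0$ and the Hodge index theorem on a general very ample surface $H$ forces $P_\sigma(S)^2\cdot H<0$, contradicting the nefness of $P_\sigma(S)|_H$), and then invoking Druel's theorem that the MMP contracts the whole divisorial part of $N_\sigma$. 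Some argument of this kind is indispensable and is missing from your proposal.

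Second, your construction of the intermediate variety $\tilde X$ by reordering the MMP so that all small modifications precede all divisorial contractions is, as you yourself flag, not justified, and I do not see how to justify it: which extremal rays are available at a given stage depends on the preceding contractions, and there is no reason a small $(K+\Delta)$-negative ray should always be available before the divisorial ones. The paper avoids this problem entirely: it runs the MMP in whatever order down to $X'$, checks by a discrepancy computation that $X'$ has canonical singularities with $S_1,\dots,S_r$ the only crepant divisors over it, and then defines $\phi\colon\tilde X\to X'$ as a $\Q$-factorial terminalisation. Since $\tilde X$ is terminal and crepant over $X'$, it must extract precisely the $S_j$, so the induced map $X\dashrightarrow\tilde X$ is an isomorphism in codimension one; it is a composition of flops, whence $\tilde X$ is smooth by Koll\'ar. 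I would recommend this ``contract everything, then terminalise'' route: it removes the ordering problem and simultaneously delivers the smoothness of $\tilde X$, which your construction also needs. (The final descent $\pi_*L\sim\phi^*M$ via the relative basepoint-free theorem, or directly via the contraction theorem at each $L$-trivial step, is fine.)
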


\begin{proof} 
This is \cite[Proposition 1.1]{Wi94}, and here we include a different proof. 

Denote $S=\sum_{j=1}^r S_j$. We first claim that $N_\sigma(S)=S$, where we use the notation from \cite[Chapter III]{Nak04}. Indeed, assume otherwise. Then $P_\sigma(S)\neq0$, and if $H$ is a general very ample irreducible surface on $X$, then $P_\sigma(S)|_H$ is nef. On the other hand, we have $L\cdot P_\sigma(S)=0$, and the Hodge index theorem implies $P_\sigma(S)^2\cdot H<0$, a contradiction.

Let $\varepsilon$ be a small positive rational number such that the pair $(X,\varepsilon S)$ is terminal. Consider a Minimal Model Program (MMP) for $(X,\varepsilon S)$. Since $N_\sigma(S)=S$, this MMP contracts the whole $S$ by \cite[Th\'eor\`eme 1.2]{Dru11}. We claim that this MMP is $L$-trivial, and in particular, $L$ stays nef on every step of this MMP. Indeed, as the analysis below shows, it suffices to prove this on the first step of the process, say $\varphi\colon X\dashrightarrow Y$. Assume $\varphi$ is a divisorial contraction and let $C$ be a curve contracted by $\varphi$; the case of a flip is analogous. Since $C\cdot S<0$, we must have $C\subseteq\Supp S$, and in particular, $L\cdot C=0$. Therefore, there exists a divisor $L_Y$ on $Y$ such that $L\sim\varphi^*L_Y$, hence $L_Y\cdot \varphi_*S_j=0$ for every $j$. 

Let $X'$ be the result of this MMP. An easy analysis of discrepancies shows that $X'$ is canonical, and the only geometric valuations with discrepancy zero correspond to $S_1,\dots,S_r$. Let $\phi\colon \widetilde X\to X'$ be a $\Q$-factorial terminalisation of $X'$. Then the induced birational map $\pi\colon X\dashrightarrow \widetilde X$ can be written as a composition of flops, and $\widetilde X$ is smooth by \cite[Theorem 2.4]{Kol89}. This finishes  the proof of the theorem.
\end{proof} 

\begin{rem} 
Given a Calabi-Yau threefold $X$ and a nef line bundle $\mathcal L$ on $X$, the condition $\kappa (X,\mathcal L) \geq 0$ is clearly invariant under isomorphisms in codimension one. Therefore, with notation from Theorem \ref{flop}, we may always pass from $X$ to $\widetilde X$ and therefore assume that $\phi$ is a morphism already on $X$.
\end{rem} 

In analogy with locally free sheaves, we say that a torsion free sheaf $\mathcal E$ is nef if the line bundle $\OO_{\PS(\mathcal E)}(1)$ is nef; we refer to \cite{Anc82} for details.  Moreover, $\sE$ is big if for some resolution $\pi\colon \widetilde X \to X$ such that $\widetilde {\mathcal E} := \pi^*\mathcal E / {\rm torsion} $ is locally free, the line bundle $\OO_{\PS(\widetilde {\mathcal E})}(1) $ is big. It is easy to see that this notion does not depend on the choice of $\pi$. 

\medskip

The goal of this section is the following result.

\begin{thm}\label{topology}
Let $X$ be a Calabi-Yau threefold and let $L$ be a nef divisor on $X$ with $\nu(X,L)=2$, such that there exists a birational morphism $\phi\colon X \to Z$ which is a good Calabi-Yau model for $L$. Let $S_1, \ldots, S_r$ be all the prime divisors on $X$ orthogonal to $L$. Let $g_j$ be the irregularity of a resolution of $S_j$. If
\begin{enumerate}
\item[(i)] there exists a general very ample divisor $G$ on $Z$, a very ample divisor $H$ on $Z$ and a positive integer $m_1$ such that for general $D \in | H |$ (so that $ D+G$ has simple normal crossings on the smooth locus of $Z$), the reflexive sheaf 
$$\Omega^{[1]}_Z\big(\log (D+G)\big) \otimes \OO_Z(m\phi_*L)$$
is nef for $m \geq m_1$, 
\item[(ii)] the divisor $(\phi_*L)|_{D + G}$ is ample, and
\item[(iii)] $\frac{c_3(X)}{2} \neq r-\sum_{j=1}^r g_j$,
\end{enumerate}
then $L$ is semiample.
\end{thm}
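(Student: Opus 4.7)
The plan is to mimic the contradiction strategy from the proof of Theorem \ref{ample}. Assume $L$ is not semiample, so $\kappa(X,L)=-\infty$ by Proposition \ref{prop1}(i). Setting $\Sigma=\sum_{j=1}^r S_j$, the goal is to establish
$$\chi\bigl(X,\Omega^1_X(\log \Sigma)\otimes \OO_X(mL)\bigr)=0 \quad \text{for }m\gg 0,$$
and then to identify this Euler characteristic with $-\tfrac12 c_3(X)+\sum_{j=1}^r(1-g_j)$, thereby forcing $\tfrac12 c_3(X)=r-\sum g_j$ and contradicting hypothesis (ii).

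First I would run the argument of Propositions \ref{pro:H2vanishing} and \ref{pro:H1vanishing} on the canonical model $Z$, replacing $\Omega^1_X$ by the reflexive sheaf $\Omega^{[1]}_Z$ and its log-version along a very general $D\in|H|$. The Segre class computation of Proposition \ref{pro:H2vanishing}, which depends only on intersection numbers, transfers to $Z$, so hypothesis (i) together with Lemma \ref{van} applied on a resolution (followed by push-down using rational singularities of $Z$) and the Akizuki-Nakano-type vanishing for $\Omega^{[1]}_Z(\log D)\otimes\OO_Z(-D-m\phi_*L)$ (valid for klt pairs, as in \cite[Cor.~6.4]{EV92} suitably adapted) yields
$$H^2\bigl(Z,\Omega^{[1]}_Z\otimes \OO_Z(m\phi_*L)\bigr)=0\quad\text{and}\quad H^2\bigl(Z,\Omega^{[2]}_Z\otimes\OO_Z(m\phi_*L)\bigr)=0$$
for $m\gg0$. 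The $H^0$ and $H^3$ vanishings come from Proposition \ref{pro:2.3new} applied on a resolution, since $\kappa(X,L)=-\infty$ implies $\kappa(Z,\phi_*L)=-\infty$ too.

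Next I would lift these vanishings to $X$. Using the crepant extension theorem for reflexive differentials on varieties with canonical singularities, there is an isomorphism of reflexive sheaves
$$\phi^{[*]}\bigl(\Omega^{[1]}_Z(\log D)\bigr)\simeq \Omega^1_X\bigl(\log (\widetilde D+\Sigma)\bigr),$$
where $\widetilde D$ is the strict transform of $D$, an isomorphism on the locus where $\phi$ is an isomorphism and compatible with residues along $\Sigma$. Combining this with the Leray spectral sequence for $\phi$ (whose higher direct images of the relevant sheaves vanish, by relative Kawamata-Viehweg on the crepant contraction), the vanishings on $Z$ promote to
$$H^p\bigl(X,\Omega^1_X(\log \Sigma)\otimes \OO_X(mL)\bigr)=0 \quad \text{for all }p\text{ and all }m\gg 0,$$
which is the required Euler characteristic vanishing. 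The corresponding argument with $-mL$, combined with Serre duality and Proposition \ref{prop1}(iv), is then parallel to the end of the proof of Theorem \ref{ample}.

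Finally I would compute the Euler characteristic combinatorially: twisting the residue sequence \eqref{eq:residue1} applied to $\Sigma$ by $\OO_X(mL)$ gives
$$\chi\bigl(X,\Omega^1_X(\log \Sigma)\otimes \OO_X(mL)\bigr)=\chi\bigl(X,\Omega^1_X\otimes \OO_X(mL)\bigr)+\sum_{j=1}^r\chi\bigl(S_j,\OO_{S_j}(mL|_{S_j})\bigr).$$
Since $L\cdot S_j=0$, the restriction $L|_{S_j}$ is numerically trivial, and the last Euler characteristic is independent of $m$ and equals $\chi(S_j,\OO_{S_j})$. Because $\phi$ is a crepant contraction with $\phi(S_j)$ of strictly lower dimension, each $S_j$ is uniruled (it is either a generalised del Pezzo, if $\phi(S_j)$ is a point, or a conic-bundle type surface over $\phi(S_j)$, if $\phi(S_j)$ is a curve), hence $p_g(\widetilde S_j)=0$, so $\chi(S_j,\OO_{S_j})=1-g_j$ by Leray applied to $\widetilde S_j\to S_j$. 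Combined with Proposition \ref{prop1}(iv), the vanishing $\chi(X,\Omega^1_X(\log\Sigma)\otimes\OO_X(mL))=0$ becomes $-\tfrac12 c_3(X)+\sum(1-g_j)=0$, contradicting (ii).

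The main obstacle is the lifting step: justifying that the reflexive vanishings on $Z$ translate to genuine vanishings on $X$ for the logarithmic sheaf $\Omega^1_X(\log \Sigma)$, and that the natural pullback of reflexive log-differentials from $Z$ matches $\Omega^1_X(\log(\widetilde D+\Sigma))$ rather than something with extra torsion or saturation defect. A secondary subtle point is the identification $\chi(S_j,\OO_{S_j})=1-g_j$ when $S_j$ is singular, which requires the uniruledness of contracted divisors in a Calabi-Yau crepant contraction and a careful comparison between $\OO_{S_j}$ and $\OO_{\widetilde S_j}$.
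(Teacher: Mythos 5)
Your global strategy is the paper's: assume $L$ is not semiample, prove that all cohomology of a suitable logarithmic cotangent sheaf twisted by $\OO(mL)$ vanishes for $m\gg0$, and extract from the resulting Euler characteristic identity the equation $\tfrac12 c_3(X)=r-\sum g_j$, contradicting (ii). But the step you flag as ``the main obstacle'' is in fact where essentially all of the work of Section \ref{sec:semicriterion2} lives, and the mechanisms you propose to carry it out do not hold as stated.

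First, you work on $X$ with the sheaf $\Omega^1_X(\log\Sigma)$, $\Sigma=\sum S_j$. The divisor $\Sigma$ need not have simple normal crossings (the $S_j$ can be singular and can meet badly), so $\Omega^1_X(\log\Sigma)$ is not a locally free sheaf with an exact residue sequence of the form \eqref{eq:residue1}; your Euler characteristic decomposition $\chi(X,\Omega^1_X(\log\Sigma)\otimes\OO(mL))=\chi(X,\Omega^1_X\otimes\OO(mL))+\sum\chi(S_j,\OO_{S_j})$ therefore has no justification. The paper avoids this entirely by passing to a log resolution $\tau\colon Y\to X$ for which the $\pi$-exceptional locus $E=\sum E_j$ is snc, and runs the residue sequence on $Y$. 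Relatedly, your identification $\chi(S_j,\OO_{S_j})=1-g_j$ is false in general for singular (possibly non-normal) $S_j$; the paper instead computes with the smooth components $E_j$ and reconciles $\chi(Y,\Omega^1_Y)$ with $\chi(X,\Omega^1_X)$ through the explicit change of $h^{1,1}$ and $h^{2,1}$ under the blow-ups constituting $\tau$.

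Second, the transfer of vanishing from $Z$ upward. You invoke a ``crepant pullback'' isomorphism $\phi^{[*]}\Omega^{[1]}_Z(\log D)\simeq\Omega^1_X(\log(\widetilde D+\Sigma))$ and a vanishing of higher direct images ``by relative Kawamata--Viehweg.'' Neither exists: the extension theorems of \cite{GKKP11} give the \emph{pushforward} isomorphisms $\pi_*\Omega^p_Y(\log E)\simeq\Omega^{[p]}_Z$, not a pullback statement, and relative Kawamata--Viehweg says nothing about $R^q\phi_*$ of logarithmic differentials. The paper's route is via Leray for $\pi\colon Y\to Z$, which requires proving $R^2\pi_*\Omega^1_Y(\log E)=0$ and $R^1\pi_*\Omega^1_Y(\log E)=0$ (Lemma \ref{lem:vanish1} and Proposition \ref{pro:vanish1}(v)). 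These vanishings are the technical heart of the section: they use Reid's local description of canonical singularities in codimension two, the rationality argument with the Fr\"olicher spectral sequence in Lemma \ref{lem:surface}, the uniruledness of $\pi$-exceptional divisors from \cite{HM07a}, and the nefness statements of Lemmas \ref{lem:nef} and \ref{lem:nef2}. None of this is replaced by anything in your sketch, so the proposal has a genuine gap precisely at the step you identified but did not close.
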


\begin{rem}
We discuss some special cases of Theorem \ref{topology}. 
\begin{enumerate}
\item[(a)] When $r=0$, we recover Theorem \ref{thm:introample}.
\item[(b)] If all surfaces $S_j$ are rational, then the condition (2) becomes $c_3(X) \neq 2r$.
\end{enumerate}
\end{rem} 

\subsection{Auxiliary results}

\begin{lem} \label{lem:nef} 
Let $E$ be a smooth projective surface and let $\pi\colon E \to B$ a surjective morphism to a curve $B$ with a general fibre isomorphic to $\PS^1$. Let $D \ne 0$ be a simple normal crossings divisor on $E$ and let $F$ be a general fibre of $\pi$ which meets $D$ transversally.
\begin{enumerate}
\item[(a)] If $D \cap F \ne \emptyset, $ then $\Omega^1_E(\log D) |_F \otimes \OO_F(1)$ is nef. 
\item[(b)] If $D \cap F$ contains at least two points, then $\Omega^1_E(\log D) |_F$ is nef. 
\end{enumerate} 
\end{lem}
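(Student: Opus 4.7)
The plan is to produce, on the fibre $F$, a short exact sequence
$$0 \to \OO_F \to \Omega^1_E(\log D)|_F \to \OO_F(k-2) \to 0,$$
where $k=\#(D \cap F)$, and then to conclude the nefness statements directly from Lemma \ref{lem:nefVB}(b).

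First I would set up the local picture near $F$. Since $F$ is a general fibre, $\pi$ is smooth in a neighbourhood of $F$, so $\Omega^1_{E/B}$ is locally free there; since $F$ is general, it also avoids the finite set $\Sing(D)$, so $D$ is smooth at each of the $k$ points of $D \cap F$ and meets $F$ transversally at each. Any vertical component of $D$ is disjoint from $F$ and may be ignored locally. In this setting the relative logarithmic cotangent sequence
$$0 \to \pi^*\Omega^1_B \to \Omega^1_E(\log D) \to \Omega^1_{E/B}(\log D) \to 0$$
is exact on a neighbourhood of $F$, with all three terms locally free. Restriction to $F$ therefore preserves exactness, and the terms compute to $\pi^*\Omega^1_B|_F \simeq \OO_F$ (since $B$ is a curve and $\pi$ is constant on $F$) and $\Omega^1_{E/B}(\log D)|_F \simeq \Omega^1_F(\log(D \cap F)) \simeq \Omega^1_F \otimes \OO_F(D \cap F) \simeq \OO_F(k-2)$, using $F \simeq \PS^1$ and the transversality of $F$ with $D$. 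This produces the desired sequence.

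For part (b), if $k \geq 2$, both $\OO_F$ and $\OO_F(k-2)$ are nef on $F \simeq \PS^1$, so $\Omega^1_E(\log D)|_F$ is nef by Lemma \ref{lem:nefVB}(b). For part (a), if $k \geq 1$, twisting the sequence by $\OO_F(1)$ gives
$$0 \to \OO_F(1) \to \Omega^1_E(\log D)|_F \otimes \OO_F(1) \to \OO_F(k-1) \to 0,$$
and the same lemma applies, since both extreme terms are nef.

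The only non-trivial step is the derivation of the exact sequence; once it is in place, both conclusions are immediate from the behaviour of nefness under extensions. The point to watch is that the restriction of the relative log cotangent sequence to $F$ stays exact, which is exactly what the smoothness of $\pi$ near $F$ and the transversality of $F$ with the horizontal part of $D$ guarantee.
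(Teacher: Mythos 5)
Your proof is correct, and it reaches the conclusion by a route that differs from the paper's in the key step. Both arguments rest on the same extension: your sub-line-bundle $\pi^*\Omega^1_B|_F\simeq\OO_F$ is (via $d\pi$) exactly the conormal bundle $N^*_{F/E}$ that the paper injects into $\Omega^1_E(\log D)|_F$, so the two quotients coincide. The difference is in how the quotient is controlled. The paper only writes it as $\OO_F(a)\oplus{\rm torsion}$ and excludes $a\leq-2$ (resp.\ $a\leq-1$) by a contradiction argument: a quotient of too negative degree would force the composite $T_F\to T_E(-\log D)|_F\to N_{D/E}|_F$ to vanish at a point of $D\cap F$, contradicting transversality. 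You instead identify the quotient outright as $\Omega^1_{E/B}(\log D)|_F\simeq\Omega^1_F(\log(D\cap F))\simeq\OO_F(k-2)$ via the relative logarithmic cotangent sequence; this is legitimate because near the general fibre $F$ the morphism $\pi$ is smooth, the vertical components of $D$ are disjoint from $F$, and the horizontal part of $D$ is a disjoint union of local sections of $\pi$ through the points of $D\cap F$, so all three terms of the sequence are locally free there and restriction to $F$ stays exact. Your computation buys the exact degree $k-2$ and the absence of torsion in the quotient, after which both parts follow at once from Lemma \ref{lem:nefVB}(b); the paper's version avoids introducing the relative log cotangent sheaf but obtains only a lower bound on the degree. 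Both arguments are complete.
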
 
 
\begin{proof} 
We have the exact sequence 
$$ 0 \to \Omega^1_E |_F \to \Omega^1_E(\log D)|_F \to Q \to 0 $$
where $Q$ is a skyscraper sheaf supported on $F \cap D$. Composing with the canonical morphism $N^*_{F/E} \to \Omega^1_E |_F$, we obtain the exact sequence
$$ 0 \to N^*_{F/E} \to \Omega^1_E(\log D)|_F \to Q' \to 0, $$
where $Q'$ contains $\Omega^1_F$ as proper subsheaf. Write $\OO_F(a):=Q'/{\rm torsion}$. 

For (a), by Lemma \ref{lem:nefVB}(d) it suffices to show that $\OO_F(a)\otimes\OO_F(1)$ is nef. If $a \leq -2$, then we would obtain a (generically) surjective morphism $\Omega^1_E(\log D) |_F \to \Omega^1_F$. Dually, $T_F \subseteq T_E(- \log D) |_F$, and hence the canonical morphism $T_F \to N_{D/E} |_F$ vanishes, which contradicts the assumption that $D$ and $F$ meet transversally.

For (b), it suffices to show that $Q'$ is nef. Fix two distinct points $p,q\in D \cap F$. If $a = {-}1$, then by the same argument as above, we obtain an injective morphism
$$ T_F \otimes \OO_F(-p) \to T_E({-}\log D) |_F, $$ 
so that the induced morphism $T_F \otimes \OO_F(-p) \to N_{D/E}|_F $ vanishes at $q$, contradicting the assumption that $D$ and $F$ meet transversally at $q$. 
\end{proof} 

In the same way we prove the following:

\begin{lem} \label{lem:nef2}
Let $D \ne 0$ be a simple normal crossings divisor on $\PS^2$ and let $\ell \subseteq\PS^2$ be a line meeting $D$ transversally. Then $\Omega^1_{\PS^2}(\log D)|_\ell \otimes \OO_\ell(1)$ is nef. 
\end{lem}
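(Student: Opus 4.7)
The plan is to adapt the proof of Lemma \ref{lem:nef}(a) line-for-line, with the pair $(E,F)$ replaced by $(\PS^2,\ell)$. The ambient variety being $\PS^2$ simplifies matters because all the relevant line bundles on $\ell \simeq \PS^1$ are completely explicit: $N^*_{\ell/\PS^2}\simeq \OO_\ell(-1)$ and $\Omega^1_\ell \simeq \OO_\ell(-2)$.

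First, I would write the conormal sequence
$$0 \to N^*_{\ell/\PS^2}\to \Omega^1_{\PS^2}|_\ell \to \Omega^1_\ell \to 0,$$
and compose the left inclusion with the natural inclusion $\Omega^1_{\PS^2}|_\ell \hookrightarrow \Omega^1_{\PS^2}(\log D)|_\ell$ coming from the residue sequence \eqref{eq:residue1}. This produces a short exact sequence
$$0 \to N^*_{\ell/\PS^2}\to \Omega^1_{\PS^2}(\log D)|_\ell \to Q' \to 0,$$
in which $Q'$ contains $\Omega^1_\ell$ as a proper subsheaf: the quotient $\Omega^1_{\PS^2}(\log D)|_\ell/\Omega^1_{\PS^2}|_\ell$ is a skyscraper sheaf supported on $\ell \cap D$, which is nonempty by Bezout since $D \neq 0$.

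Tensoring by $\OO_\ell(1)$ turns the left-hand term into $\OO_\ell$, which is nef, so by Lemma \ref{lem:nefVB}(b) it is enough to show that $Q'\otimes \OO_\ell(1)$ is nef. Writing $Q' = \OO_\ell(a) \oplus T$ with $T$ torsion, this amounts to proving $a \geq -1$. Assume for contradiction that $a \leq -2$; then the inclusion $\Omega^1_\ell = \OO_\ell(-2) \hookrightarrow \OO_\ell(a)\oplus T$ must factor through $\OO_\ell(a)$, forcing $a = -2$ and exhibiting $\Omega^1_\ell$ as a direct summand of $Q'$. Composing the resulting projection with the quotient $\Omega^1_{\PS^2}(\log D)|_\ell\to Q'$ yields a surjection $\Omega^1_{\PS^2}(\log D)|_\ell \to \Omega^1_\ell$, whose dual is an inclusion $T_\ell \hookrightarrow T_{\PS^2}(-\log D)|_\ell$. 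Thus the canonical map $T_\ell \to N_{D/\PS^2}|_{\ell\cap D}$ vanishes at every point of $\ell\cap D$, contradicting the transversality assumption.

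Since the argument is essentially identical to that of Lemma \ref{lem:nef}(a), there is no genuine obstacle; the only point requiring a little care is bookkeeping the torsion summand of the rank-one sheaf $Q'$ on $\PS^1$, so that the normalised quotient $\OO_\ell(a)$ is correctly identified as a summand rather than merely a quotient modulo torsion.
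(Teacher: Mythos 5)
Your proof is correct and is essentially the paper's own argument: the paper proves Lemma \ref{lem:nef2} by declaring it follows ``in the same way'' as Lemma \ref{lem:nef}(a), which is exactly the adaptation you carry out, with the only change being $N^*_{\ell/\PS^2}\simeq\OO_\ell(-1)$ in place of the trivial conormal bundle of a fibre. (One cosmetic point: the inclusion $\OO_\ell(-2)\hookrightarrow\OO_\ell(a)\oplus T$ need not literally factor through $\OO_\ell(a)$, but its component into $\OO_\ell(a)$ is nonzero, hence injective, which is all you use.)
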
 

The following lemma is certainly well-known; we include a short proof, lacking a suitable reference.

\begin{lem} \label{lem:surface1} 
Let $S$ be a  normal quasi-projective surface with only rational double points as singularities. Let $\pi\colon \widehat S \to S$ be a resolution with the exceptional divisor $F = \sum_{j=1}^N F_j$. Then $R^1\pi_*\Omega^1_{\widehat S}(\log F)= 0$.
\end{lem}

\begin{proof} 
The problem being local, we may assume that $S$ is a Stein space with one singularity $s$. Moreover, we may assume that $F$ is a deformation retract of $\widehat S$. Pushing forward via $\pi$ the residue sequence
$$ 0 \to  \Omega^1_{\widehat S} \to \Omega^1_{\widehat S}(\log F) \to \bigoplus\nolimits_{j=1}^N \OO_{F_j} \to 0$$ 
and using the canonical isomorphism $\pi_*\Omega^1_{\widehat S} \simeq \pi_*\Omega^1_{\widehat S}(\log F) $, see  \cite[Theorems 1.4 and 16.1]{GKKP11}, we obtain the exact sequence 
$$ 0 \to \bigoplus_{j=1}^N \pi_*\OO_{F_j} \to R^1\pi_*\Omega^1_{\widehat S} \to R^1\pi_*\Omega^1_{\widehat S}(\log F) \to \bigoplus_{j=1}^N R^1\pi_*\OO_{F_j} = 0,$$
where the last vanishing follows from the rationality of the singularity $s$. Since $ \bigoplus_{j=1}^N \pi_*\OO_{F_j}$ is a  skyscraper sheaf of length $N$, it suffices to show that the skyscraper sheaf  $R^1\pi_*\Omega^1_{\widehat S}$ has length at most $N$. Since 
$$H^1(\widehat S, \Omega^1_{\widehat S}) \simeq H^0(S,R^1\pi_*\Omega^1_{\widehat S}),$$
it suffices to show that $h^1(\widehat S, \Omega^1_{\widehat S}) \leq N$.

Consider the Fr\"olicher spectral sequence
$$ E_1^{p,q} = H^q(\widehat S, \Omega^p_{\widehat S})\Longrightarrow H^{p+q}(\widehat S,\C). $$
Note that $E_1^{0,1} = H^1(\widehat S, \OO_{\widehat S}) = 0$ since $s$  is a rational singularity and $S$ is Stein, and moreover, 
$$ E_1^{2,1} = H^1\big(\widehat S, \OO_{\widehat S}(K_{\widehat S})\big) = R^1\pi_*\OO_{\widehat S}(K_{\widehat S}) =0$$
by the Grauert-Riemenschneider vanishing. Therefore, $E_\infty^{1,1}\simeq H^1(\widehat S, \Omega^1_{\widehat S})$. On the other hand, since $F$ is a deformation retract of $\widehat  S$, we have 
$$ h^2(\widehat S, \mathbb C) = h^2(F, \mathbb C) = N,$$
and the lemma follows.
\end{proof} 

\begin{lem} \label{lem:vanish1}
Let $Z$ be a $\Q$-factorial projective threefold with canonical singularities, and let $\pi\colon Y \to Z $ be a resolution such that the exceptional set $ E = \sum_{j=1}^s  E_j$ is a simple normal crossings divisor on $Y$. Then:
\begin{enumerate}
\item[(a)] $R^2\pi_*\Omega^1_Y(\log E) = 0$,
\item[(b)] the support of the sheaf $ R^1\pi_*\Omega^1_Y(\log E)$ is zero-dimensional.
\end{enumerate}
\end{lem}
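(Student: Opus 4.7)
Both claims concern $R^i\pi_*\Omega^1_Y(\log E)$ for $i=1,2$; since $\pi$ is an isomorphism outside $\Sing(Z)$, the supports lie in $\Sing(Z)$. My plan is to apply $R\pi_*$ to the residue sequence
\begin{equation*}
0 \to \Omega^1_Y \to \Omega^1_Y(\log E) \to \bigoplus_{j=1}^s \OO_{E_j} \to 0 \qquad (\ast)
\end{equation*}
and to treat separately the generic points of $1$-dimensional components of $\Sing(Z)$ (via a local product structure, reducing to the surface case of Lemma~\ref{lem:surface}) and the isolated singular points of $Z$ (via Grothendieck-Serre duality).

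For (b), at a general point $z$ of a $1$-dimensional component of $\Sing(Z)$, canonicality of $Z$ forces the transverse slice to be Du Val, so analytically locally $(Z,z)\simeq (S,0)\times(\Delta,0)$ with $(S,0)$ a Du Val germ. Then $\pi$ restricts to the product $\pi_{\tilde S}\times\id_\Delta$ with $\tilde S\to S$ the minimal resolution and exceptional curves $F_1,\dots,F_\ell$; locally $E=(\sum_k F_k)\times\Delta$ and
\begin{equation*}
\Omega^1_Y(\log E)\big|_{\mathrm{loc}}\simeq p_1^*\Omega^1_{\tilde S}\bigl(\log \textstyle\sum_k F_k\bigr)\oplus p_2^*\Omega^1_\Delta.
\end{equation*}
K\"unneth for higher direct images expresses $R^1\pi_*\Omega^1_Y(\log E)|_{\mathrm{loc}}$ as a direct sum of $R^1\pi_{\tilde S,*}\Omega^1_{\tilde S}(\log\sum_k F_k)\boxtimes\OO_\Delta$, vanishing by Lemma~\ref{lem:surface}, and $R^1\pi_{\tilde S,*}\OO_{\tilde S}\boxtimes\Omega^1_\Delta$, vanishing by rationality of $(S,0)$. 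Hence the support of $R^1\pi_*\Omega^1_Y(\log E)$ is zero-dimensional. The same local product argument also gives $R^2\pi_*\Omega^1_Y(\log E)=0$ at $z$, because every K\"unneth summand involves $R^2\pi_{\tilde S,*}$ of a coherent sheaf, which vanishes since fibres of $\pi_{\tilde S}$ are at most $1$-dimensional; combined with upper-semicontinuity of fibre dimension (for a birational morphism of threefolds, fibres of dimension $\geq 2$ form a zero-dimensional locus), the support of $R^2\pi_*\Omega^1_Y(\log E)$ is contained in finitely many isolated singular points.

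It remains to prove $R^2\pi_*\Omega^1_Y(\log E)|_p=0$ at each isolated singular point $p$. From $(\ast)$, using $R^3\pi_*\Omega^1_Y=0$ (fibres have dimension $\leq 2$),
\begin{equation*}
R^2\pi_*\Omega^1_Y\big|_p\to R^2\pi_*\Omega^1_Y(\log E)\big|_p\to\bigoplus_j R^2\pi_*\OO_{E_j}\big|_p\to 0.
\end{equation*}
For each $E_j$ contracted to $p$, the sequence $0\to\OO_Y(-E_j)\to\OO_Y\to\OO_{E_j}\to 0$ combined with rationality of $(Z,p)$ (giving $R^{\geq 1}\pi_*\OO_Y=0$) and $R^3\pi_*\OO_Y(-E_j)=0$ yields $R^2\pi_*\OO_{E_j}=0$, so the right-hand term vanishes and one reduces to showing $R^2\pi_*\Omega^1_Y|_p=0$. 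This is the main obstacle. My plan is to use Grothendieck-Serre duality: since $Z$ is Cohen-Macaulay of dimension $3$ and $\pi$ is birational with $Y$ smooth,
\begin{equation*}
R\pi_*\Omega^2_Y\simeq R\HHom_Z\bigl(R\pi_*\Omega^1_Y,\omega_Z\bigr),
\end{equation*}
translating the desired vanishing into a higher Grauert-Riemenschneider-type statement $R^{\geq 1}\pi_*\Omega^2_Y=0$ at the canonical (hence rational, hence Du Bois) point $p$, which can be extracted from Saito's theory of Hodge modules applied to the Du Bois complex of $Z$. Establishing this Hodge-theoretic vanishing at isolated canonical threefold points---or, alternatively, verifying it by a case analysis based on the classification of exceptional configurations over a canonical singular point---is where the substantive work lies.
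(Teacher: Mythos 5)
Part (b) of your proposal is essentially the paper's argument: Reid's transverse Du Val structure at a general point of a one-dimensional component of $\Sing(Z)$, reduction to a product resolution, and then Lemma \ref{lem:surface} together with rationality. That part is fine (modulo the same glossed-over point as in the paper, namely that the given resolution need only be compared with, not equal to, the product resolution near such a point).

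Part (a), however, has a genuine gap, and you have identified it yourself: after correctly reducing via the residue sequence and rationality to the vanishing $R^2\pi_*\Omega^1_Y\big|_p=0$ at the finitely many points where the fibre is $2$-dimensional, you do not prove that vanishing. The proposed route --- Grothendieck duality $R\pi_*\Omega^2_Y\simeq R\HHom_Z(R\pi_*\Omega^1_Y,\omega_Z)$ followed by a ``higher Grauert--Riemenschneider'' statement $R^{\geq 1}\pi_*\Omega^2_Y=0$ extracted from Hodge module theory --- is not carried out, and even granting the duality it does not cleanly output $R^2\pi_*\Omega^1_Y=0$ without untangling a local-duality spectral sequence mixing all the $R^q\pi_*\Omega^1_Y$; nor is it clear that the auxiliary vanishing holds for arbitrary canonical threefold points. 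The paper avoids passing to $\Omega^1_Y$ altogether: it twists by $-E$ and invokes the Steenbrink-type vanishing $R^2\pi_*\bigl(\Omega^1_Y(\log E)(-E)\bigr)=0$ of \cite[Theorem 14.1]{GKKP11}, which reduces (a) to $H^2\bigl(E_j,\Omega^1_Y(\log E)|_{E_j}\bigr)=0$ for the components $E_j$ contracted to points. Since $Z$ is canonical, these $E_j$ are uniruled by \cite[Corollary 1.5]{HM07a}, and the required $H^2$-vanishing (equivalently $H^0\bigl(E_j,T_{E_j}(-\log D)\otimes\omega_{E_j}\bigr)=0$) follows from the elementary nefness statements of Lemmas \ref{lem:nef} and \ref{lem:nef2} applied to general ruling lines. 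Also note a small imprecision in your localisation step: the finitely many points where $R^2\pi_*$ can be supported are the points with $2$-dimensional fibre, which need not be isolated points of $\Sing(Z)$ --- they may sit on one-dimensional components of the singular locus --- so an argument at such a point cannot assume the singularity is isolated.
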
 

\begin{proof}
For (a), the vanishing \cite[Theorem 14.1]{GKKP11} reduces the claim to 
$$ R^2\pi_*\big(\Omega^1_Y(\log E)|_E\big) = 0,$$ 
hence we need to show  
$$ R^2\pi_*\big(\Omega^1_Y(\log E)|_{E_j}\big) = 0$$ 
for all components $E_j$ of $E$. Clearly, this needs to be proved only for those $E_j$ with $\dim \pi(E_j) = 0$, in which case we need to show 
$$H^2\big(E_j, \Omega^1_Y(\log E)|_{E_j}\big)=0.$$
Now clearly $H^2(E_j,\mathcal O_{E_j}) = 0.$ Using the canonical exact sequence
$$ 0 \to \Omega^1_{E_j} (\log D) \to \Omega^1_Y(\log E) |_{E_j} \to \OO_{E_j} \to 0 $$
with $D := E - E_j$,  it  therefore suffices to show that 
$$H^2\big(E_j,  \Omega^1_{E_j}(\log D)\big) = 0,$$
or dually, that
\begin{equation} \label{eq:last}   
H^0\big(E_j, T_{E_j}(-\log D) \otimes \omega_{E_j}\big) = 0.
\end{equation} 
By \cite[Corollary 1.5]{HM07a}, $E_k$ are uniruled surfaces. If $E \not\simeq \bP^2$, let $\ell \subseteq E_j$ be a general ruling line, so that $\ell\simeq\PS^1$ with $\ell^2 =0$. If $E \simeq \bP^2$, let $\ell$ be a general line. Then by Lemmas \ref{lem:nef} and \ref{lem:nef2}, the sheaf
$$  \Omega^1_{E_j}(\log D) |_\ell \otimes \OO_\ell(1) $$
is nef. This gives the vanishing \eqref{eq:last}.

\medskip

For (b), we may assume that the singular locus of $Z$ has only components of dimension $1$. Let $y$ be a general point on such a component. By \cite[Corollary 1.14]{Rei80},  locally analytically around $y$ we have $Z \simeq S \times \Delta$, where $S$ is a surface with only rational double points, and $\Delta$ is a small disk. Possibly shrinking $\Delta$, we may therefore assume that $Y = \widehat S \times \Delta$, where $\tau\colon\widehat S \to S$ is a resolution and $\pi = \tau \times \id$, and we denote by $\widehat p_1\colon Y \to \widehat  S$ and by $p_1\colon Z \to S$ the projections. Let $F = \sum F_j$ be the exceptional divisor of $\tau$, so that $E = F \times \Delta$. Since 
$$ \Omega^1_Y(\log E) \simeq \widehat p_1^*\Omega^1_{\widehat S}(\log F) \oplus \OO_Y,$$
and since $Z$ has rational singularities, it suffices to show that 
$$ R^1\pi_*\widehat p_1^*\Omega^1_{\widehat S}(\log F) = 0.$$
Since by the flat base change we have 
$$ R^1\pi_*\widehat p_1^*\Omega^1_{\widehat S}(\log F) \simeq p_1^*R^1\tau_*\Omega^1_{\widehat S}(\log F),$$
the desired vanishing follows from Lemma \ref{lem:surface1}. 
\end{proof}

\subsection{Cohomology of log differentials}

Our first aim is generalisations of Lemma \ref{van} and Proposition \ref{pro:H2vanishing}.

\begin{lem} \label{lem:prepvan}
Let $X$ be a normal projective Gorenstein threefold and let $\mathcal E$ be a reflexive sheaf of rank $r$ on $X$ such that $\det \mathcal E$ is locally free. Assume that $\mathcal E $ is big and nef. Then
$$ H^2(X, \mathcal E \otimes \det \mathcal E \otimes \mathcal M \otimes \omega_X) =  0$$
for any nef line bundle $\mathcal M.$ 
\end{lem}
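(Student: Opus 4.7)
The plan is to reduce to the smooth case (Lemma \ref{van}) by resolving both the singularities of $X$ and the non-locally-free locus of $\sE$ simultaneously.

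First I would pick a resolution $\nu\colon Y \to X$ such that $\sF := \nu^*\sE/\mathrm{torsion}$ is locally free on the smooth threefold $Y$. Since nefness is preserved under this modified pullback and since bigness of a reflexive sheaf is defined via precisely such a resolution, $\sF$ is nef and big. Applied on $Y$, Lemma \ref{van} then gives
$$H^q(Y, \omega_Y \otimes \sF \otimes \det\sF \otimes \nu^*\mathcal{M}) = 0 \quad\text{for }q\geq 1.$$
It remains to descend this vanishing to $X$ via the Leray spectral sequence for $\nu$.

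To handle the higher direct images, I would factor through $\mu := \nu \circ \pi$ where $\pi\colon \bP(\sF) \to Y$ is the projective bundle. Using $\omega_{\bP(\sF)} = \pi^*(\omega_Y \otimes \det\sF) \otimes \OO_{\bP(\sF)}(-r)$, the projection formula together with $R^i\pi_*\OO_{\bP(\sF)}(1)=0$ for $i>0$ yields
$$R\pi_*\bigl(\omega_{\bP(\sF)} \otimes \OO_{\bP(\sF)}(r+1)\bigr) = \omega_Y \otimes \sF \otimes \det\sF$$
concentrated in degree $0$, hence
$$R^i\nu_*(\omega_Y \otimes \sF \otimes \det\sF) = R^i\mu_*\bigl(\omega_{\bP(\sF)} \otimes \OO_{\bP(\sF)}(r+1)\bigr).$$
Because $\OO_{\bP(\sF)}(1)$ is $\pi$-ample, it is $\mu$-nef and $\mu$-big; relative Kawamata--Viehweg vanishing then kills the right-hand side for $i>0$. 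Tensoring with $\mathcal{M}$ via the projection formula gives $R^i\nu_*\sG = 0$ for $i \geq 1$, where $\sG := \omega_Y \otimes \sF \otimes \det\sF \otimes \nu^*\mathcal{M}$.

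The main obstacle is the identification of the direct image: showing $\nu_*\sG = \omega_X \otimes \sE \otimes \det\sE \otimes \mathcal{M}$. Off the subset $Z := \Sing X \cup \{\text{non-locally-free points of }\sE\}$, which has codimension $\geq 2$ in $X$, the map $\nu$ is an isomorphism and the identification is tautological; since the target is reflexive on the normal variety $X$, it suffices to check that $\nu_*\sG$ is $S_2$ along $Z$. This is where the Gorenstein hypothesis on $X$ (and, implicitly in the intended application to good Calabi--Yau models, the canonicity of the singularities) enters through the Grauert--Riemenschneider-type identity $\nu_*\omega_Y = \omega_X$; combining this with the fact that $\det\sF$ differs from $\nu^*\det\sE$ only by an exceptional divisor supported over $Z$, a reflexivity/depth argument yields the required isomorphism. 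Granted this identification, Leray together with the vanishings above produces $H^2(X, \omega_X \otimes \sE \otimes \det\sE \otimes \mathcal{M}) = H^2(Y, \sG) = 0$, as desired.
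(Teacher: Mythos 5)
Your overall architecture matches the paper's: resolve so that $\sF=\nu^*\sE/\mathrm{torsion}$ is locally free, apply Lemma \ref{van} on the resolution, kill the higher direct images $R^i\nu_*(\omega_Y\otimes\sF\otimes\det\sF)$, and descend by Leray. (Your route to the $R^i$-vanishing via relative Kawamata--Viehweg on $\bP(\sF)$ is a harmless variant of the paper's appeal to \cite[Lemma 4.3.10]{Laz04}.) The gap is exactly at the step you yourself flag as ``the main obstacle'': the identification $\nu_*\sG\cong\omega_X\otimes\sE\otimes\det\sE\otimes\mathcal M$. First, this already fails at the level of the canonical sheaf: for a normal Gorenstein threefold one only has the inclusion $\nu_*\omega_Y\subseteq\omega_X$, with equality characterizing rational singularities; the lemma assumes neither canonical nor rational singularities, so invoking ``$\nu_*\omega_Y=\omega_X$'' imports a hypothesis that is not there (your parenthetical ``implicitly in the intended application'' is an admission of this). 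Second, even granting rational singularities, the assertion that $\nu_*\sG$ is $S_2$ along $Z$ is precisely the hard point, and ``a reflexivity/depth argument yields the required isomorphism'' is not an argument: pushforwards of locally free sheaves under proper birational maps are torsion-free but not reflexive in general, and $\det\sF$ differs from $\nu^*\det\sE$ by exceptional divisors whose contribution to the pushforward you would have to control.

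The paper avoids all of this, and this is why the lemma is stated only for $H^2$ rather than for all $H^q$ with $q\geq1$. Since $\nu_*(\omega_Y\otimes\sF\otimes\det\sF)$ is torsion-free and agrees with the reflexive sheaf $\omega_X\otimes\sE\otimes\det\sE$ on the big open set where $\nu$ is an isomorphism and $\sE$ is locally free, there is a natural \emph{inclusion}
$$\nu_*\bigl(\omega_Y\otimes\sF\otimes\det\sF\bigr)\hookrightarrow\omega_X\otimes\sE\otimes\det\sE$$
whose cokernel $\mathcal Q$ is supported in dimension at most $1$. Hence $H^q(X,\mathcal Q\otimes\mathcal M)=0$ for $q\geq2$, and the long exact sequence in cohomology transfers the vanishing of $H^2$ from the subsheaf (which you have, by Leray and your $R^i$-vanishing) to $H^2(X,\omega_X\otimes\sE\otimes\det\sE\otimes\mathcal M)$. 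Replacing your isomorphism claim by this inclusion-plus-cokernel argument closes the gap and removes the need for any assumption beyond normal Gorenstein.
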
 

\begin{proof} 
Choose a resolution of singularities $ \pi\colon \widetilde X \to X$ such that the sheaf $ \widetilde {\mathcal  E}  := \pi^*\mathcal E / {\rm torsion}$ is locally free. Since $\widetilde {\mathcal E}$ is big and nef, we have
$$H^q\big(\widetilde X, \widetilde {\mathcal E} \otimes \det \widetilde {\mathcal E} \otimes \pi^*\mathcal M \otimes \omega_{\widetilde X}\big) = 0 $$
for $q \geq 1$ and for any nef line bundle $\mathcal M$ on $X$ by Lemma \ref{van}. This implies
$$ R^q\pi_*\big(\widetilde {\mathcal E} \otimes \det \widetilde {\mathcal E} \otimes  \omega_{\widetilde X}\big) = 0 $$
for $q \geq 1$ by \cite[Lemma 4.3.10]{Laz04}, hence
$$ H^q\big(X, \pi_*\big(\widetilde {\mathcal E} \otimes \det \widetilde {\mathcal E} \otimes \omega_{\widetilde X}\big) \otimes \mathcal M\big)  = 0$$
for $q \geq 1$ by the Leray spectral sequence. Now, 
$$ \pi_*\big(\widetilde {\mathcal E} \otimes \det \widetilde {\mathcal E} \otimes  \omega_{\widetilde X}\big) \subseteq \mathcal E \otimes \det \mathcal E \otimes \omega_X, $$
and the cokernel  $\mathcal Q$ of this inclusion is supported on a set of dimension at most $1$. Hence the result follows from the vanishing $H^q(X,\mathcal Q \otimes \mathcal M) = 0$ for $q\geq2$, and from the long exact sequence in cohomology.
\end{proof} 

\begin{thm} \label{amp2}
Let $X$ be a $\Q$-factorial threefold with canonical singularities and with $\omega_X  \simeq \OO_X$. Let $L$ be a nef divisor on $X$ with $\nu(X,L)=2$. Let $G$ be a general very ample divisor on $X$. Assume that there exists a very ample divisor $H$ and a positive integer $m_1$ such that for general $D \in | H |$ (so that $D+G$ has simple normal crossings on the smooth locus of $X$) the following holds:
\begin{enumerate} 
\item[(i)] the reflexive sheaf $\Omega^{[1]}_X\big(\log(D+G)\big) \otimes \mathcal O_X(mL)$ is nef for $m \geq m_1$,  
\item[(ii)] the divisor $L|_{D+G}$ is ample. 
\end{enumerate} 
If $L$ is not semiample, then 
$$H^2\big(X,\Omega^{[1]}_X \otimes \OO_X(mL)\big) = 0 \quad \textrm{for }m \gg 0.$$
\end{thm}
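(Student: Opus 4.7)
The plan is to adapt the proof of Proposition~\ref{pro:H2vanishing} to the present singular setting, replacing Lemma~\ref{van} by Lemma~\ref{lem:prepvan}. Assume $L$ is not semiample and fix a general $D\in|H|$; by Bertini $D$ is smooth and meets $\Sing X$ in a subset of codimension at least $2$ in $D$. The reflexive residue sequence
$$0\to\Omega^{[1]}_X\to\Omega^{[1]}_X(\log D)\to \OO_D\to 0$$
is exact on the smooth locus of $X$ and, because its first two terms are reflexive (cf.\ \cite{GKKP11}), extends to an exact sequence on $X$. Tensoring with $\OO_X(\mu L)$ and taking cohomology produces a segment in which $H^1(D,\OO_D(\mu L))=0$ for $\mu\gg0$ by Serre vanishing, since $L|_D$ is ample by~(ii). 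Hence it suffices to establish
$$H^2\bigl(X,\Omega^{[1]}_X(\log D)\otimes\OO_X(\mu L)\bigr)=0 \quad\text{for }\mu\gg0.$$

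To do so, tensor the standard sequence $0\to\OO_X(-D)\to\OO_X\to\OO_D\to0$ by $\Omega^{[1]}_X(\log D)\otimes\OO_X(\mu L+D)$ to get
\begin{multline*}
H^1\bigl(D,\Omega^{[1]}_X(\log D)\otimes\OO_D(\mu L+D)\bigr) \to H^2\bigl(X,\Omega^{[1]}_X(\log D)\otimes\OO_X(\mu L)\bigr)\\
\to H^2\bigl(X,\Omega^{[1]}_X(\log D)\otimes\OO_X(\mu L+D)\bigr).
\end{multline*}
The first term vanishes for $\mu\gg0$ by Serre vanishing, since $\mu L+D$ is ample on $D$. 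For the third, set $\mathcal E=\Omega^{[1]}_X(\log D)\otimes\OO_X(m_1 L)$; this is a reflexive rank-$3$ sheaf, nef by hypothesis~(i), and $\det\mathcal E\simeq\omega_X\otimes\OO_X(D+3m_1L)\simeq\OO_X(D+3m_1L)$ is locally free since $D$ and $L$ are Cartier and $\omega_X\simeq\OO_X$. Writing
$$\Omega^{[1]}_X(\log D)\otimes\OO_X(\mu L+D)\simeq \mathcal E\otimes\det\mathcal E\otimes\OO_X((\mu-4m_1)L)\otimes\omega_X$$
and invoking Lemma~\ref{lem:prepvan} yields the required vanishing, provided $\mathcal E$ is big and $\OO_X((\mu-4m_1)L)$ is nef (automatic for $\mu\geq 4m_1$).

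The main obstacle is establishing bigness of $\mathcal E$ for $m_1$ sufficiently large. Following the smooth case, I would pass to a resolution $\pi\colon\tilde X\to X$ chosen so that $\tilde{\mathcal E}:=\pi^*\mathcal E/\text{torsion}$ is locally free on $\tilde X$, and verify that $s_3(\tilde{\mathcal E})>0$ for $m_1\gg0$. The analogue of Proposition~\ref{prop1}(vi) should give a leading term $10m_1^2(\pi^*L)^2\cdot\pi^*D=10m_1^2 L^2\cdot D$, with correction terms depending on Chern classes of $\tilde X$ and on the discrepancy divisor of $\pi$. Since $L|_D$ is ample we have $L^2\cdot D>0$, so for $m_1$ large enough the leading term dominates. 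The technical difficulty will be to control these correction terms on $\tilde X$ coming from the singularities of $X$; once this bigness is in place, the rest of the argument proceeds exactly as in the smooth case.
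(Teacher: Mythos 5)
Your overall strategy coincides with the paper's: run the argument of Proposition~\ref{pro:H2vanishing} with Lemma~\ref{lem:prepvan} in place of Lemma~\ref{van}, and you correctly isolate the two points where the singular setting needs extra care. On the first point there is a small imprecision: a general $D\in|H|$ is normal but not smooth (it meets $\Sing X$ in finitely many points), and the cokernel of $\Omega^{[1]}_X\to\Omega^{[1]}_X(\log D)$ is not $\OO_D$ but only a coherent sheaf supported on $D$ agreeing with $\OO_{D}$ away from $D\cap\Sing X$. This is harmless, because Serre vanishing applies to an arbitrary coherent sheaf on $D$ twisted by high powers of the ample $L|_D$, so the cohomological conclusions you draw from the residue sequence survive unchanged.

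The genuine gap is the bigness of $\Omega^{[1]}_X(\log D)\otimes\OO_X(m_1L)$, which you must have in order to invoke Lemma~\ref{lem:prepvan} and which you explicitly leave as an unresolved ``technical difficulty''. The paper closes this without ever estimating the correction terms you worry about: fix $m$ with $\mathcal E_m=\Omega^{[1]}_X(\log D)\otimes\OO_X(mL)$ nef, take a resolution $\pi\colon\tilde X\to X$ with $\tilde{\mathcal E}=\pi^*\mathcal E_m/{\rm torsion}$ locally free; nefness already gives $s_3(\tilde{\mathcal E})\geq 0$, and twisting by $\pi^*\OO_X(m'L)$ one has
\[
s_3\big(\tilde{\mathcal E}\otimes\pi^*\OO_X(m'L)\big)=s_3(\tilde{\mathcal E})+5m'\,s_2(\tilde{\mathcal E})\cdot\pi^*L+10(m')^2\,c_1(\tilde{\mathcal E})\cdot(\pi^*L)^2+10(m')^3(\pi^*L)^3 ,
\]
where the last term vanishes since $\nu(X,L)=2$ forces $L^3=0$, and the coefficient of $(m')^2$ equals $10\,L^2\cdot D>0$ because the exceptional correction $F$ to $c_1(\tilde{\mathcal E})=\pi^*c_1(\mathcal E_m)-F$ is killed by $(\pi^*L)^2$ via the projection formula ($(\pi^*L)^2\cdot F=L^2\cdot\pi_*F=0$). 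Hence the twisted bundle is nef with positive top self-intersection of $\OO_{\PS(\tilde{\mathcal E})}(1)\otimes\tau^*\pi^*\OO_X(m'L)$, and pushing its sections down gives bigness of $\Omega^{[1]}_X(\log D)\otimes\OO_X(mL)$ for $m\gg0$. Your direct route --- showing $s_3(\tilde{\mathcal E}_m)>0$ for $m\gg0$ --- can also be completed, but only after supplying exactly the two facts you omit: the resolution may be chosen once and for all, independently of $m$, since twisting by the line bundle $\OO_X(mL)$ commutes with $\pi^*(\cdot)/{\rm torsion}$, so the lower-order terms are fixed constants of order $O(m)$; and the $m^2$-coefficient of $s_3(\tilde{\mathcal E}_m)$ is exactly $10\,L^2\cdot D>0$ by the same projection-formula observation. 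As written, the bigness step is asserted rather than proved.
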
 

\begin{proof} 
The proof is an easy adaptation of the proof of Proposition \ref{pro:H2vanishing}, using Lemma \ref{lem:prepvan} instead of Lemma \ref{van}. Only two issues need a word of explanation. 

First, since $D$ is general in its linear system, the complement of the set $(D+G)^{\circ} = X_{\rm reg} \cap (D\cup G)$ is finite, and the residue sequence reads
$$ 0 \to \Omega^1_{X_{\rm reg}}  \to \Omega^1_{X_{\rm reg}}\big(\log (D+G)^{\circ}\big) \to \OO_{(D+G)^{\circ}} \to 0. $$
This induces the exact sequence
$$ 0 \to \Omega^{[1]}_X \to  \Omega^{[1]}_X(\log D) \to \mathcal Q \to 0, $$
where $\mathcal Q$ is a coherent sheaf supported on $D \cup G$ which agrees with $\OO_{(D+G)^{\circ}}$ outside of a finite set.

Second, we need to show that  $\Omega^{[1]}_X\big(\log(D+G)\big) \otimes \OO_X(mL) $ is big for $m \gg 0$. To this end, let $m_1$ be as in the statement of the theorem. Let $\pi\colon \widetilde X \to X$ be a resolution of singularities such that 
$$\widetilde {\mathcal E} = \pi^*\big(\Omega^{[1]}_X\big(\log(D+G)\big) \otimes \OO_X(m_1L)\big) / {\rm torsion}$$
is locally free. Since $\widetilde {\mathcal E}$ is nef, we have $s_3\big(\widetilde {\mathcal E}\big) \geq 0$. Now a simple calculation shows that for $m \gg 0$,
$$ s_3\big(\widetilde {\mathcal E} \otimes \pi^*\OO_X(mL)\big) > 0. $$ 
In other words, if $\tau\colon \PS(\widetilde{ \mathcal E}) \to \widetilde X$ denotes the projection, then for a fixed $m \gg 0$ the line bundle 
$$\mathcal H = \OO_{\mathbb P (\widetilde {\mathcal E})}(1) \otimes \tau^* \pi^*\OO_X(mL) $$
is big and nef. Hence we have
$$ h^0\big(\widetilde X, S^k \big({\widetilde {\mathcal E}} \otimes \pi^*\OO_X(mL)\big)\big) = h^0\big(\mathbb P\big(\widetilde {\mathcal E}\big), \mathcal H^{\otimes k}\big) \sim k^5, $$
and so the sheaf ${\widetilde {\mathcal E}} \otimes \pi^*\OO_X(mL)$ is big. Therefore, pushing forward by $\pi$ we obtain that the sheaf  $\Omega^{[1]}_X\big(\log(D+G)\big) \otimes \OO_X\big((m_1+m)L\big) $ is big for $m \gg 0$. 
\end{proof} 

\begin{pro} \label{pro:vanish1}
Let $X$ be a Calabi-Yau threefold and let $L$ be a nef divisor on $X$ with $\nu(X,L)=2$, such that $\phi\colon X\to Z$ is a good Calabi-Yau model for $L$. Let $S_1, \ldots, S_r$ be all the prime divisors on $X$ orthogonal to $L$, and let $M$ be a nef divisor on $Z$ 
such that $L\sim \phi^*M$. Fix a resolution $\tau\colon Y \to X $ such that the exceptional set of the induced morphism $\pi\colon Y \to Z $ is a simple normal crossings divisor $ E = \sum_{j=1}^s  E_j$ on $Y$. 
\[
\xymatrix{ Y \ar[r]^{\tau} \ar[dr]_{\pi} & X \ar[d]^{\phi} \\
 & Z
}
\]
Under the assumptions of Theorem \ref{amp2}, suppose that $L$ is not semiample. Then we have: 
\begin{enumerate} 
\item[(i)] $H^1\big(Y, \Omega^1_Y(\log E) \otimes \pi^*\OO_Z(mM)\big) = 0 $ for any $m\ll0$,  
\item[(ii)] $H^1\big(Z, \Omega^{[1]}_Z \otimes \OO_Z(mM)\big) = H^3\big(Z, \Omega^{[1]}_Z \otimes \OO_Z(mM)\big)=0$ for any $m\ll0$,
\item[(iii)] $\chi\big(Z,\Omega^{[1]}_Z \otimes \OO_Z(mM)\big) =  0 $ for all integers $m$,
\item[(iv)] $H^q\big(Z,\Omega^{[1]}_Z \otimes \OO_Z(mM)\big) = 0$ for all $q$ and all integers $m$ such that $| m | \gg 0$,
\item[(v)] $ R^1\pi_*\Omega^1_Y(\log E) = 0$,
\item[(vi)] $ H^q\big(Y, \Omega^1_Y(\log E) \otimes \pi^*\OO_Z(mM)\big) = 0$ for all $q$ and all $m \gg 0$. 
\end{enumerate}
\end{pro}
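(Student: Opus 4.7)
The plan is to establish (v) first as a stand-alone fact, then to use (v) together with Lemma \ref{lem:vanish1}(a) and the Greb-Kebekus-Kov\'acs-Peternell extension theorem to reduce cohomology on $Y$ to cohomology on $Z$, and finally to deduce (i)--(iv) and (vi) by adapting the smooth-case arguments of Sections \ref{sec:prelim}--\ref{sec:ample} together with Riemann-Roch.

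For (v), the support of $R^1\pi_*\Omega^1_Y(\log E)$ is zero-dimensional by Lemma \ref{lem:vanish1}(b), so the task reduces to showing vanishing of the stalks at the finitely many isolated singular points $z\in Z$. Passing to an analytic neighbourhood and using that $z$ is an isolated rational Gorenstein (canonical) singularity, the plan is to mimic the proof of Lemma \ref{lem:surface}: run the Fr\"olicher spectral sequence on the local resolution, invoke Grauert-Riemenschneider together with the rationality of $z$, and control the relevant $h^1$ of log differentials by the second Betti number of the exceptional fibre, which is finite and manageable because $E$ is SNC and deformation-retracts to the fibre.

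Granted (v), the extension theorem \cite[Theorems 1.4 and 16.1]{GKKP11} gives $\pi_*\Omega^1_Y(\log E)\simeq\Omega^{[1]}_Z$, and combining this with (v), Lemma \ref{lem:vanish1}(a), and the projection formula (valid since $M$ is Cartier on $Z$), the Leray spectral sequence yields canonical isomorphisms
\begin{equation}\label{eq:planLeray}
H^q\big(Y,\Omega^1_Y(\log E)\otimes\pi^*\OO_Z(mM)\big) \simeq H^q\big(Z,\Omega^{[1]}_Z\otimes\OO_Z(mM)\big)
\end{equation}
for all $q$ and all $m\in\Z$. All remaining assertions may then be attacked on $Z$. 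For (vi), the $H^2$-vanishing is exactly Theorem \ref{amp2} applied to $Z$ under hypothesis (i) of Theorem \ref{topology}; the $H^0$-vanishing is a singular analogue of Proposition \ref{pro:2.3new} obtained by pulling back through \eqref{eq:planLeray} and using the logarithmic version of Theorem \ref{thm:CP11} together with the pseudoeffectivity of $K_Y+E$; and the $H^3$-vanishing follows by Serre duality on the Gorenstein variety $Z$. For (i), I would adapt Proposition \ref{pro:H1vanishing} to the singular setting, using the residue sequence for $\Omega^{[1]}_Z(\log D)$ (or lifting it to $Y$), applying logarithmic Kodaira-Esnault-Viehweg vanishing to the ample divisor $D+mM$ for appropriate $m$, and exploiting ampleness of $M|_D$ for general $D$ in $|H|$ (analogous to Lemma \ref{generic}). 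Part (ii) is then (i) for $H^1$, together with Serre duality plus the $H^0$-vanishing for $H^3$; part (iii) is a Hirzebruch-Riemann-Roch computation using that $M\cdot c_2(Z)=M^3=0$ (analogues of Proposition \ref{prop1}), which makes $\chi\big(Z,\Omega^{[1]}_Z\otimes\OO_Z(mM)\big)$ constant in $m$, the constant being computable by reduction through $\pi$ to $Y$; and (iv) combines (ii), (iii), (vi) with the $H^0$- and $H^3$-vanishings.

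The principal obstacle is expected to be (v) at isolated canonical singularities of $Z$, which lie outside the reach of Reid's product structure used in the proof of Lemma \ref{lem:vanish1}(b) and demand a more delicate local analytic analysis; a secondary technicality is justifying Serre duality on the singular Gorenstein variety $Z$ for the reflexive, non-locally-free sheaf $\Omega^{[1]}_Z$, which is nevertheless facilitated by the rationality of the singularities and the Cohen-Macaulay property of $Z$.
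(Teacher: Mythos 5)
Your architecture inverts the paper's logical order in a way that puts the hardest (and, as you set it up, likely intractable) step at the foundation. You propose to prove (v) first by a local analytic computation at the isolated canonical singularities of $Z$, and you correctly identify this as your principal obstacle: the Fr\"olicher/Grauert--Riemenschneider argument of Lemma \ref{lem:surface} is genuinely two-dimensional and does not extend to isolated canonical threefold singularities. The paper never attempts this. Instead it proves (ii)--(iv) using only the edge isomorphism $\pi_*\Omega^1_Y(\log E)\simeq\Omega^{[1]}_Z$ from \cite{GKKP11} (which needs no information about $R^1\pi_*$), and then \emph{deduces} (v) globally: since $R^1\pi_*\Omega^1_Y(\log E)$ has finite support by Lemma \ref{lem:vanish1}(b), one only needs $H^0$ of it to vanish, and this falls out of the five-term exact sequence of the Leray spectral sequence for $\Omega^1_Y(\log E)\otimes\pi^*\OO_Z(-mM)$, $m\gg0$, because $E_2^{2,0}=H^2\big(Z,\Omega^{[1]}_Z\otimes\OO_Z(-mM)\big)=0$ by (iv) and the abutment $H^1(Y,\cdot)$ vanishes by (i). Since everything in your plan downstream of (v) relies on the full Leray degeneration \eqref{eq:planLeray}, the unproved local step undermines the whole proposal.

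There are three further gaps. First, your sketch of (i) conflates the exceptional log divisor $E$ with an auxiliary very ample divisor $D$: statement (i) concerns $\Omega^1_Y(\log E)$ with $E$ the $\pi$-exceptional locus, and the residue terms $H^1(E_j,\OO_{E_j}(m\pi^*M))=H^1(E_j,\OO_{E_j})$ do not vanish in general (their nonvanishing is exactly why the irregularities $g_j$ appear in Theorem \ref{topology}); an adaptation of Proposition \ref{pro:H1vanishing} does not reach this statement, and the paper simply cites \cite[2.1]{Wi94}. Second, your treatment of (vi) addresses $H^0$, $H^2$ and $H^3$ but is silent on $H^1$ for $m\gg0$; that case requires the Euler-characteristic identity (iii), so (vi) cannot be established before (iii)--(iv) as your ordering suggests. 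Third, for (iii) you assert the constant value of $\chi\big(Z,\Omega^{[1]}_Z\otimes\OO_Z(mM)\big)$ is "computable by reduction through $\pi$ to $Y$" and equals zero; in the paper the constancy is indeed elementary (via $\chi(X,\Omega^1_X\otimes\OO_X(mL))$ being independent of $m$ and $M$ being numerically trivial on the supports of $R^q\phi_*\Omega^1_X$), but the vanishing of the constant is \emph{not} a Chern-class computation -- it is a squeeze: $\chi\leq 0$ for $m\gg0$ (from $H^0=H^2=H^3=0$, using Theorem \ref{amp2}) and $\chi\geq 0$ for $m\ll0$ (from (ii)). Without that squeeze your (iii), and hence (iv) and the $H^1$ part of (vi), are unsupported.
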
 

\begin{proof} 
The statement (i) is \cite[Proposition 2.1]{Wi94}. Since 
\begin{equation}\label{eq:167a}
\pi_*\Omega^p_Y(\log E) \simeq \Omega^{[p]}_Z\quad\text{and}\quad \phi_*\Omega^p_X \simeq \Omega^{[p]}_Z\quad\text{for all }0\leq p\leq 3
\end{equation}
by \cite[Theorem 16.1 and Theorem 1.4]{GKKP11}, the first vanishing of (ii) follows from (i) by the Leray spectral sequence. For the second vanishing, Serre duality, \eqref{eq:167a} and Proposition \ref{pro:2.3new} give
\begin{multline*}
H^3\big(Z, \Omega^{[1]}_Z \otimes \OO_Z(mM)\big) \simeq \Hom\big(\Omega^{[1]}_Z \otimes \OO_Z(mM), \OO_Z\big) \\
\simeq H^0\big(Z,\Omega^{[2]}_Z \otimes \OO_Z(-mM)\big)\simeq H^0\big(X,\Omega^2_X \otimes \OO_X(-mL)\big)=0.
\end{multline*}

For (iii), since $L$ is not semiample, we have that $\chi\big(X, \Omega^1_X \otimes \OO_X(m L)\big) $ is independent of $m$ by Proposition \ref{prop1}(iv). Then \eqref{eq:167a} and the Leray spectral sequence give
\begin{multline}
\chi\big(X, \Omega^1_X \otimes \OO_X(m L)\big)  = \chi\big(Z, \Omega^{[1]}_Z \otimes \OO_Z(m M )\big)\\ \notag
- \chi\big(Z,R^1\phi_*\Omega^1_X \otimes \OO_Z(mM)\big)+ \chi\big(Z,R^2\phi_*\Omega^1_Z \otimes \OO_Z(mM)\big).
\end{multline}
Since $M$ is numerically trivial on the support of $R^q\phi_*\Omega^1_X$ for $q = 1,2$, we obtain that $\chi\big(Z, \Omega^{[1]}_Z \otimes\OO_Z(m M)\big) $ is independent of $m$. Now, if $m\gg0$, then $ \chi\big(Z, \Omega^{[1]}_Z \otimes \OO_Z(m M )\big) \leq 0$ by Propositions \ref{pro:2.3new} and \ref{pro:trivialsurface} and by Theorem \ref{amp2}, whereas when $m\ll0$, then $ \chi\big(Z, \Omega^{[1]}_Z \otimes \OO_Z(m M )\big) \geq 0$ by (ii).

The assertion (iv) is a direct consequence of the proof of (iii).

For (v), by Lemma \ref{lem:vanish1}(b) the support of  $R^1\pi_*\Omega^1_Y(\log E) $ is finite, and thus we need to show that 
\begin{equation}\label{eq:167b}
H^0\big(Z,R^1\pi_*\Omega^1_Y(\log E)\big) = 0.
\end{equation}
Choose a large positive integer $m$. Consider the Leray spectral sequence
\begin{multline*}
E_2^{p,q}=H^p\big(Z,R^q\pi_*\Omega^1_Y(\log E) \otimes \OO_Z(-mM)\big)\\
\Longrightarrow H^{p+q}\big(Y,\Omega^1_Y(\log E) \otimes \pi^*\OO_Z(-mM)\big).
\end{multline*}
Then $E_2^{2,0}=0$ by (iv) and \eqref{eq:167a}, and hence $E_2^{0,1}=0$ by (i), which implies \eqref{eq:167b}.

Finally, (vi) follows from (iv), (v), and from Lemma \ref{lem:vanish1}(a).
\end{proof} 

\begin{proof}[Proof of Theorem \ref{topology}]
Assume that $L$ is not semiample. Fix a resolution $\tau\colon Y \to X $ such that the exceptional set of the induced morphism $\pi\colon Y \to Z $ is a simple normal crossings divisor $ E = \sum_{j=1}^s  E_j$ on $Y$; we may choose $\tau$ as a finite sequence of blowups along smooth centres which lie in the exceptional locus of $\phi$, hence $\pi^*M |_{E_j} \equiv 0$ for all $j$. Then the residue sequence \eqref{eq:residue1} and Proposition \ref{pro:vanish1}(vi) give
$$\chi\big(Y,\Omega^1_Y \otimes \OO_Y(m \pi^*M)\big) = - \sum_{j=1}^s \chi\big(E_j,\OO_{E_j}(m \pi^*M)\big)=-\sum_{j=1}^s \chi(E_j,\OO_{E_j}) $$
for $m \gg 0$. The Riemann-Roch therefore gives
$$\chi\big(Y,\Omega^1_Y \otimes \OO_Y(m \pi^*M)\big) = \chi(Y,\Omega^1_Y)\quad\text{for all }m.$$
By relabelling, we assume that $E_j$ are strict transforms of $S_j$ for $j=1,\dots, r$, that $E_j$ come from blowing-up a smooth curve $B_j$ on some model of $X$ for $j=r+1,\dots, r+t$, and that $E_j$ come from blowing-up points for $j=r+t+1,\dots, s$. Then $ h^{1,1}(Y) = h^{1,1}(X) + s-r$ and 
$$ h^{2,1}(Y) = h^{2,1}(X) + \sum_{j=r+1}^{r+t}g(B_j),$$ 
hence
$$ \chi(X,\Omega^1_X) = s-r-\sum_{j=r+1}^{r+t}g(B_j) -  \sum_{j=1}^s \chi(E_j,\OO_{E_j}). $$
Now $ \chi(E_j,\OO_{E_j}) = 1 -g_j$ for $j=1,\dots,r$, $ \chi(E_j,\OO_{E_j}) = 1 - g(B_j)$ for $j=r+1,\dots, r+t$ and $ \chi(E_j,\OO_{E_j}) = 1$ for $j=r+t+1,\dots,s$, thus
$$\chi(X,\Omega^1_X)  = - r + \sum_{j=1}^r g_j,$$
which together with Proposition \ref{prop1}(iv) finishes the proof of Theorem \ref{topology}. 
\end{proof}

\bibliographystyle{amsalpha}

\bibliography{biblio}

\end{document}